\documentclass[12pt]{article}
\usepackage{geometry}
\geometry{left=2.5cm,right=2.5cm,top=2.5cm,bottom=2.5cm}
\usepackage{tipa}
\usepackage{latexsym,amssymb,amsfonts}
\usepackage{mathrsfs}
\usepackage{graphicx,color}
\usepackage{psfrag,hyperref}
\usepackage{amsmath, amsbsy}
\usepackage{amsopn, amstext}
\usepackage{amsmath,multicol,amsopn}
\usepackage{latexsym,amssymb}
\usepackage{amsbsy, amstext,makeidx}
\usepackage{algorithmicx,algorithm}
\usepackage{epstopdf}
\usepackage{epsfig}
\usepackage{times}
\usepackage{multirow}

\usepackage{bm}
\usepackage{enumerate}

\usepackage{bbm}
\usepackage{cite}
\usepackage{enumerate}
\usepackage{textcomp}
\usepackage{rotating}
\usepackage{adjustbox}



\def\cB{{\cal B}}

\def\cT{{\cal T}} 
\def\cH{{\cal H}}

\def\va{{\bf a}}

\def\vc{{\bf c}}
\def\vd{{\bf d}}
\def\ve{{\bf e}}

\def\vg{{\bf g}}

\def\vk{{\bf k}}
\def\vl{{\bf l}}

\def\vs{{\bf s}}

\def\vu{{\bf u}}
\def\vv{{\bf v}}
\def\vw{{\bf w}}
\def\vx{{\bf x}}
\def\vy{{\bf y}}
\def\vz{{\bf z}}

\def\mZero{{\bf 0}}

\def\vzero{{\bf 0}}

\def\vtheta{{\bm\theta}}

\def\vxi{{\bm\xi}}

\def\square#1{\vbox{\hrule\hbox{\vrule height#1%
     \kern#1\vrule}\hrule}}
\def\rectangle#1#2{\vbox{\hrule\hbox{\vrule height#1%
     \kern#2\vrule}\hrule}}


\font\tenbb=msbm10 \font\sevenbb=msbm7 \font\fivebb=msbm5

\newfam\bbfam
\scriptscriptfont\bbfam=\fivebb \textfont\bbfam=\tenbb
\scriptfont\bbfam=\sevenbb

\newtheorem{remark}{Remark}

\newtheorem{theorem}{Theorem}

\newtheorem{definition}{Definition}
\newtheorem{proposition}{Proposition}

\def\BlackBox{{\rule{1.5ex}{1.5ex}}} 
\newenvironment{proof}{\par\noindent{\bf Proof\ }}{\hfill\BlackBox\\[2mm]}

\makeatletter
   
   \@addtoreset{equation}{section}
\makeatother

\begin{document}

\title{A sparse semismooth Newton based augmented Lagrangian method for large-scale support vector machines}

\author{\small Dunbiao Niu\thanks{College of Mathematics, Sichuan University, No.24 South Section 1, Yihuan Road, Chengdu 610065,  China. ({\tt dunbiaoniu\_sc@163.com}).}~
        ~Chengjing Wang\thanks{Corresponding author. School of Mathematics, Southwest Jiaotong University, No.999, Xian Road, West Park, High-tech Zone, Chengdu 611756, China. ({\tt renascencewang@hotmail.com}).}~
        ~Peipei Tang\thanks{School of Computing Science, Zhejiang University City College, Hangzhou 310015, China. ({\tt tangpp@zucc.edu.cn}).}~
        ~Qingsong Wang\thanks{School of Mathematics, Southwest Jiaotong University, No.999, Xian Road, West Park, High-tech Zone, Chengdu 611756, China. ({\tt nothing2wang@hotmail.com}).}~
        ~and Enbin Song\thanks{College of Mathematics, Sichuan University, No.24 South Section 1, Yihuan Road, Chengdu 610065,  China. ({\tt e.b.song@163.com}).}
}
\date{}
\maketitle

\begin{abstract}
Support vector machines (SVMs) are successful modeling and prediction tools with a variety of applications. Previous work has demonstrated the superiority of the SVMs in dealing with the high dimensional, low sample size problems. However, the numerical difficulties of the SVMs will become severe with the increase of the sample size. Although there exist many solvers for the SVMs, only few of them are designed by exploiting the special structures of the SVMs. In this paper, we propose a highly efficient sparse semismooth Newton based augmented Lagrangian method for solving a large-scale convex quadratic programming problem with a linear equality constraint and a simple box constraint, which is generated from the dual problems of the SVMs. By leveraging the primal-dual error bound result, the fast local convergence rate of the augmented Lagrangian method can be guaranteed. Furthermore, by exploiting the second-order sparsity of the problem when using the semismooth Newton method, the algorithm can efficiently solve the aforementioned difficult problems. Finally, numerical comparisons demonstrate that the proposed algorithm outperforms the current state-of-the-art solvers for the large-scale SVMs.
\end{abstract}

 \textbf{Keywords}: support vector machines, semismooth Newton method, augmented Lagrangian method

\section{Introduction}
Support Vector Machines (SVMs), introduced by \cite{Vapnik1963Pattern}, are originally formulated for binary classification problems that aim to separate two data sets with the widest margin. Nowadays, the SVMs have been extended to solve a variety of pattern recognition and data mining problems such as feature selection \cite{Taira1999Feature}, text categorization \cite{Goudjil2018A}, hand-written character recognition \cite{Azim2016}, image classification \cite{Lin2011Large}, and so on. Among the different applications of the SVMs, we first introduce two specific examples.
\begin{itemize}
   \item The C-Support Vector Classification  (C-SVC) \cite{Cortes1995Support,Boser1992A}

   Given a training set of instance-label pairs $(\tilde{\vx}_{i},y_{i}),\,i = 1, . . . , n$, where $\tilde{\vx}_{i} \in \mathbb{R}^{q}$ and $y_{i} \in \{+1, -1\}$, the C-SVC aims to find a hyperplane in a given reproducing kernel Hilbert space $\cH$ \cite{aronszajn1950theory} to separate the data set into two classes with the widest margin. In this model, we need to solve the following optimization problem
\begin{equation}\label{Def_C_SVM_Problem}
\begin{split}
\min_{w,b,\xi}\hspace*{2mm} &\frac{1}{2}\langle\vw,\vw\rangle_{\cH}+C \sum_{i=1}^{n}{\xi_{i}}\\
\text{s.t.}\hspace*{2mm} &y_{i}\left(\langle\vw,\phi(\tilde{\vx}_{i})\rangle_{\cH}+b\right)\geq1-\xi_{i},\\
&  \xi_{i} \geq 0,\ i=1,\ldots,n,\\
\end{split}
\end{equation}
where $C > 0$ is a regularization parameter, $\langle\cdot,\cdot\rangle_{\cH}$ is the inner product in a reproducing kernel Hilbert space $\cH$ and $\phi: \mathbb{R}^{q}\rightarrow \cH$ is a feature map such that the function $K(\tilde{\vx}_{i},\tilde{\vx}_{j}):=\langle\phi(\tilde{\vx}_{i}),\phi(\tilde{\vx}_{j})\rangle_{\cH}$ is a reproducing kernel of $\cH$ for any data  $\tilde{\vx}_{i}, \tilde{\vx}_{j} \in \mathbb{R}^{q}$. For example, $K(\tilde{\vx}_{i},\tilde{\vx}_{j})=\tilde{\vx}_{i}^{T}\tilde{\vx}_{j}$ is a linear kernel and $K(\tilde{\vx}_{i},\tilde{\vx}_{j})=\text{exp}^{- || x||^2/2\alpha}$ is a radial basis function (RBF) kernel, where $\alpha > 0$ is a fixed parameter called the width. As it has been shown in \cite{Cortes1995Support}, the dual of the problem
\eqref{Def_C_SVM_Problem} is the following quadratic programming problem
\begin{equation}\label{Def_Dual_of_C_SVM_Problem}
\begin{split}
\min_{\vx\in \mathcal{R}^n}\hspace*{2mm} &\frac{1}{2}\vx^{T}Q\vx-\ve^{T}\vx\\
\text{s.t.}\hspace*{2mm} &\vy^{T}\vx=0,\\
& 0\leq x_{i} \leq C,\ i=1,\ldots,n,\\
\end{split}
\end{equation}
where $\ve = [1,\ldots,1]^T \in \mathbb{R}^{n}$ is a vector of all ones, $\vy=[y_1,\ldots,y_n]^{T}\in \mathbb{R}^{n}$ is a label vector and $Q\in \mathbb{S}^{n}$ (the space of $n\times n$ symmetric matrices) is a positive semidefinite matrix with $Q_{ij}=y_i y_j K(\tilde{\vx}_{i},\tilde{\vx}_{j}),\,i,j=1,\ldots,n$.

\item The Support Vector Regression (SVR) \cite{Vapnik1998SVR}

  For a set of training points $(\tilde{\vx}_{i},y_{i})\ ,i = 1, . . . , n$, the regression problem is to predict the quantitative response $y_{i}\in \mathbb{R}$ on the basis of the predictor variable $\tilde{\vx}_{i}\in \mathbb{R}^q$. Given a regularization coefficient $C>0$ and an insensitivity parameter $\epsilon>0$, the standard form of the support vector regression is
\begin{equation}\label{Def_SVR_Problem}
\begin{split}
\min_{\vw,b,\vxi,\vxi^{*}}\hspace*{2mm} &\frac{1}{2}\langle\vw,\vw\rangle_{\cH}+C\sum_{i=1}^{n}{\xi_{i}}+C\sum_{i=1}^{n}{\xi^{*}_{i}}\\
\text{s.t.}\hspace*{2mm} & \langle\vw,\phi(\tilde{\vx}_{i})\rangle_{\cH}+b-y_{i}\geq \epsilon+\xi_{i},\\
&y_{i} - \langle\vw,\phi(\tilde{\vx}_{i})\rangle_{\cH}-b\geq \epsilon-\xi^{*}_{i},\\
&  \xi_{i},\xi^{*}_{i} \geq 0,\ i=1,\ldots,n.
\end{split}
\end{equation}
Furthermore, the dual of the problem \eqref{Def_SVR_Problem} is
\begin{equation}\label{Def_Dual_of_SVR_Problem}
\begin{split}
\min_{\vx,\vz\in \mathcal{R}^n}\hspace*{2mm} &\frac{1}{2}[\vx^{T},\vz^{T}]\left(
 \begin{array}{cc}
K & -K \\
-K & K \\
\end{array}
\right)
\left[
  \begin{array}{c}
    \vx \\
    \vz \\
  \end{array}
\right]+\sum_{i=1}^{n}{(\varepsilon+y_{i})x_{i}}+\sum_{i=1}^{n}{(\varepsilon-y_{i})z_{i}}
\\
\text{s.t.}\hspace*{2mm} &[\ve^{T},-\ve^{T}]
\left[
  \begin{array}{c}
    \vx \\
    \vz \\
  \end{array}
\right]=0,\\
& 0\leq x_{i},z_{i} \leq C,\ i=1,\ldots,n,\\
\end{split}
\end{equation}
where $K\in \mathbb{S}^{n}_{+}$ is a positive semidefinite kernal matrix with $K_{ij}=K(\tilde{\vx}_{i},\tilde{\vx}_{j}),\,i,j=1,\ldots,n$.

\end{itemize}

The above two problems \eqref{Def_C_SVM_Problem} and \eqref{Def_SVR_Problem} show that there are various formulations for the SVMs in different scenarios. However, their dual problems \eqref{Def_Dual_of_C_SVM_Problem} and \eqref{Def_Dual_of_SVR_Problem} can be summarized as the following unified form

\begin{equation}\label{Problem_P1_classical1}
\begin{split}
\min_{\vx\in \mathcal{R}^n}\hspace*{2mm} &\frac{1}{2} \vx^{T}Q\vx+\vc^{T}\vx\\
\text{s.t.}\hspace*{2mm} &\va^{T}\vx=d,\\
& \vl \leq \vx \leq \vu,
\end{split}
\end{equation}
where $Q\in \mathbb{S}^{n}$ is a positive semidefinite matrix and $\vc, \va, \vl, \vu \in \mathbb{R}^{n}$ and $d\in \mathbb{R}$ are given vectors and scalar, respectively. Moreover, we assume that $\vl < \vu$, i.e., $l_{i} < u_{i}$ for all $i\in\{1,\ldots,n\}$, where $l_{i}$ and $u_{i}$ are the $i$th elements of $\vl$ and $\vu$, respectively.
Currently most of the work on computational aspects of the SVMs concentrate on solving the dual problem \eqref{Problem_P1_classical1} since it is a more general framework to handle the SVMs. But there still exist some algorithms that solve the primal problem. For example, Yin and Li \cite{Yin2019A} proposed a semismooth Newton method to solve the primal problems of L2-loss SVC model and $\epsilon$-L2-loss SVR model with linear kernels recently.

    For the convex problem \eqref{Problem_P1_classical1}, many existing optimization algorithms including the accelerated proximal gradient (APG) method, the alternating direction method of multipliers (ADMM) and the interior-point method (IPM) et al., can be applied to solve it efficiently when the problem scale is small or moderate. However,
    when facing the large-scale problems, the numerical difficulties become severe.
    Specifically, when the dimension $n$ is very large, the full storage of the $n \times n$ dense matrix $Q$ in \eqref{Problem_P1_classical1} is very difficult and even impossible for a typical computer.
    Therefore we cannot apply the standard quadratic programming solvers which require the full storage of $Q$ directly.
   An alternative approach is to compute the elements of $Q$ from the original data when it is required. However, this becomes prohibitively time consuming since the elements of $Q$ are frequently required in the process of solving the problem \eqref{Problem_P1_classical1}.

  Currently, one approach to avoid using the whole elements of the matrix $Q$ is to adopt the decomposition strategy \cite{Osuna1997An,Joachims1998Making,Platt1999Fast}. In this kind of approach, only a small subset of variables in each iteration need to be updated so that only a few rows of the matrix $Q$ are involved in, which significantly reduces the computational cost.
    The Sequential Minimal Optimization (SMO) method \cite{Platt1999Fast} is one of the well-known decomposition methods, in which only two variables are considered in each iteration.
     The popular SVMs solver LIBSVM \cite{Chang2011LIBSVM} also implements an SMO-type method \cite{Fan2005Working} to solve the problem \eqref{Problem_P1_classical1}.

  Another widely used algorithm for the problem \eqref{Problem_P1_classical1} is the gradient projection (GP) method.
  Thanks to the low-cost algorithm of the projection onto the feasible set of \eqref{Problem_P1_classical1} and the identification properties of the GP method \cite{Calamai1987Projected},
  we only need to consider a reduced subproblem which is related to the current active set of variables.
  Furthermore, inspired by the algorithm for the bound constrained quadratic programming problem \cite{Mor1991On}, the proportionality-based $2$-phase gradient projection (P2GP) method \cite{Serafino2018A} is derived to solve the problem \eqref{Problem_P1_classical1}. In addition, the fast APG (FAPG) method \cite{ito2017unified} is another commonly used algorithm to solve various classification models, including the C-SVC, L2-loss SVC, and $\textit{v}$-SVM et al..

   The advantage of the decomposition and PG methods is that only a small subset of variables are involved in each subproblem, i.e., only a small part of the elements of the matrix $Q$ are required in each iteration.
   However, as the numerical experiments shown in Section \ref{section_4}, both algorithms may encounter the problem of slow convergence. If $Q$ is positive definite and the nondegeneracy assumption holds, the SMO-type decomposition methods are shown to be linear convergent to the solution of \eqref{Problem_P1_classical1} in \cite{Chen2006A}. It is well-known that the PG-type methods can exhibit (linear) sublinear convergence when the objective function is (strongly) convex. These existing theories partly explain why the convergences of these algorithms are not ideal.

 In this paper, we aim to solve the problem \eqref{Problem_P1_classical1} by applying the augmented Lagrangian (AL) method \cite{Rockafellar1976Augmented} to the dual problem of \eqref{Problem_P1_classical1}. Meanwhile, a semismooth Newton (SsN) method is used to solve the inner subproblems of the AL method. It is well known that to fully fulfill the potential of the AL method, the inner subproblems should be solved accurately and efficiently. Although the SsN method is an ideal approach to solve the inner subproblems, it is not proper to be applied directly because the cost of the SsN method may be very high at the beginning few iterations due to the lack of sparse structure of the generalized Hessian, especially for very large-scale problems. To overcome this difficulty, we may use some algorithm, say, the random Fourier features method \cite{rahimi2008random} which will be introduced in Section \ref{section_4}, to produce a good initial point, and then transfer to the semismooth Newton based augmented Lagrangian (SsNAL) method. The generalized Hessian for the inner subproblem may probably has some kind of sparse structure. Wisely exploiting this nice structure may largely reduce the computational cost and the memory consumption in each SsN step.
Hence, our proposed algorithm not only has the same advantage as the decomposition method in \cite{Joachims1998Making} with memory requirements being linear to the number of training examples and support vectors, but also has the fast local convergent rate in both inner and outer iterations. Since our algorithm fully takes advantage of the sparse structure, we call it a Sparse SsN based AL (SSsNAL) method. Besides, there are three main reasons why the SSsNAL method can be implemented efficiently to solve the problem \eqref{Problem_P1_classical1}:
 \begin{itemize}
   \item[(\uppercase\expandafter{\romannumeral1})] The piecewise linear-quadratic structure of the problem \eqref{Problem_P1_classical1} guarantees the fast local convergence of the AL method \cite{Robinson1981Some,Sun1986monotropic}.
   \item[(\uppercase\expandafter{\romannumeral2})] There exist many efficient algorithms \cite{Helgason1980A,BRUCKER1984163,Cosares1994STRONGLY,Kiwiel2008Variable,ito2017unified} to compute the value of the Euclidean projection of any given point onto the feasible set \eqref{Problem_P1_classical1} due to its special structure. Furthermore, the explicit formula of the generalized Jacobian, which is named HS-Jacobian \cite{Han1997Newton}, can be easily derived.
   \item[(\uppercase\expandafter{\romannumeral3})] It is generally true for many SVMs that the number of the support vectors are much less than that of the training examples, and many multiplier variables of the support vectors are at the upper bound of the box constraint \cite{Joachims1998Making}. That is, very few of the components of the optimal solution lie in the interior of the box constraint.
 \end{itemize}

As will be shown later, the above (\uppercase\expandafter{\romannumeral1}) and (\uppercase\expandafter{\romannumeral2}) guarantee the inner subproblem can be solved by the SsN method with a very low memory consumption in each iteration. And the above (\uppercase\expandafter{\romannumeral1}), (\uppercase\expandafter{\romannumeral2}) and (\uppercase\expandafter{\romannumeral3}) together provide an insight into the compelling advantages of applying the SSsNAL method to the problem \eqref{Problem_P1_classical1}. Indeed, for the large-scale problem \eqref{Problem_P1_classical1}, the numerical experiments in Section \ref{section_4} will show that the SSsNAL method only needs at most a few dozens of outer iterations to reach the desired solutions while all the inner subproblems can be solved without too much effort.

The rest of the paper is organized as follows. In Section \ref{Preliminaries}, some preliminaries about the restricted dual formulation and some necessary error bound results are provided. Section \ref{section_3} is dedicated to present the SSsNAL method for the restricted dual problem in details. Numerical experiments are presented on real data in Section \ref{section_4}, which verify the performance of our SSsNAL method against other solvers. Finally, Section \ref{section_5} concludes this paper.

\textbf{Notations:} Let $\mathcal{X}$ and $\mathcal{Y}$ be two real finite dimensional Euclidean spaces. For any convex function $p: S\subset\mathcal{X} \rightarrow (-\infty,\infty]$, its conjugate function is denoted by $p^*$, i.e., $p^*(x)=\sup_{y}\{\langle x,y \rangle-p(y)\}$, and its subdifferential at $x$ is denoted by $\partial p(x)$, i.e., $\partial p(x):=\{y\,|\,p(z)\geq p(x)+\langle y,z-x\rangle, \forall\ z\in \text{dom}(p)\}$. For a given closed convex set $\Omega$ and a vector $x$, we denote the distance from $x$ to $\Omega$ by $\textrm{dist}(x,\Omega) := \inf_{y\in \Omega}{||x-y||}$ and the Euclidean projection of $x$ onto $\Omega$ by $\Pi_{\Omega}(x) := \arg\min_{y\in \Omega}{||x-y||}$. For any set-valued mapping $F: \mathcal{X} \rightrightarrows \mathcal{Y}$, we use $\text{gph}\ F$ to denote the graph of $F$, i.e., $\text{gph}\ F:=\{(x,y)\in \mathcal{X}\times \mathcal{Y}\ |\ y \in F(x) \}$. We use $I_{n}$ to denote the $n\times n$ identity matrix in $\mathbb{R}^n$ and $A^\dag$ to denote the Moore-Penrose pseudo-inverse of a given matrix $A \in \mathbb{R}^{n\times n}$.

\section{Preliminaries}\label{Preliminaries}
In this section, we present some necessary error bound results, which will be used in the convergence rate analysis of the AL method in Section \ref{subsec:SSsNAL}.

We denote the single linear constraint and the box constraint in the problem \eqref{Problem_P1_classical1} by
\begin{equation}\label{Def L and K}
\begin{split}
L:=\{\vx \in \mathbb{R}^{n}\ | \ \va^{T}\vx=d \}\textrm{ and }K:=\{\vx \in \mathbb{R}^{n}\ | \ \vl\leq \vx \leq \vu \},
\end{split}
\end{equation}
respectively. Then the problem \eqref{Problem_P1_classical1} can be equivalently rewritten as
\begin{eqnarray*}\label{Problem_Primal}
(\textbf{P}) & \min\limits_{\vx\in \mathbb{R}^n} \left\{f(\vx):=\frac{1}{2}\langle \vx, Q\vx\rangle+\langle \vc,\vx\rangle+\delta_{\text{\tiny $K\bigcap L$ }}(\vx)\right\},
\end{eqnarray*}
where $\delta_{\text{\tiny $K\bigcap L$ }}$ is the indicator function for the polyhedral convex set $K\cap L$, i.e., $\delta_{\text{\tiny $K\bigcap L$ }}(\vx)=0$ if $\vx \in K\cap L$, otherwise $\delta_{\text{\tiny $K\bigcap L$ }}(\vx)=+\infty$. Note that the problem (\textbf{P}) is already the dual formulation of the SVMs in the introduction, but we still regard it as the primal problem based on our custom. The dual of the problem (\textbf{P}) is

\begin{eqnarray*}\label{Problem_P1_dual}
(\textbf{D}) & \min\limits_{\vw\in \mathbb{R}^n,\ \vz\in \mathbb{R}^n} \left\{\frac{1}{2}\langle \vw, Q\vw\rangle+\delta^{*}_{\text{\tiny $K\bigcap L$ }}(\vz)\,|\, Q\vw+\vz+\vc=0,\ \vw \in \text{Ran}(Q)\right\},
\end{eqnarray*}
where $\delta^{*}_{\text{\tiny $K\bigcap L$ }}$ is the conjugate of the indicator function $\delta_{\text{\tiny $K\bigcap L$ }}$, and Ran$(Q)$ denotes the range space of $Q$. Note that the additional constraint $\vw \in \text{Ran}(Q)$ is reasonable because, for any $\vw^{0} \in \text{Ran}^{\bot}(Q)$, $\vw$ and $\vw+\vw^{0}$ have the same objective function values and both satisfy the linear constraint in (\textbf{D}). As will be shown in the next section, the constraint $\vw\in \text{Ran}(Q)$ in fact plays an important role to guarantee that the subproblem has a unique solution and our designed algorithm is efficient. Since we restrict $\vw$ in the range space artificially, we may also call (\textbf{D}) a restricted dual problem. Correspondingly, the Karush-Kuhn-Tucker (KKT) condition associated with the problem (\textbf{P}) is given by
\begin{equation}\label{KKT_(DUAL)}
\begin{split}
\vx-\text{Prox}_{\delta_{\text{\tiny $K\bigcap L$ }}}(\vx+\vz)=0,\ Q\vw-Q\vx=0,\ Q\vw+\vz+\vc=0,
\end{split}
\end{equation}
where the proximal mapping for a given closed proper convex function $p: \mathbb{R}^{n}\to (-\infty,+\infty]$ is defined by
$$\text{Prox}_{p}(\vu):=\arg\min_{\vx}\big{\{}p(\vx)+\frac{1}{2}||\vu-\vx||^2\big{\}}, \forall\, \vu\in \mathbb{R}^n.$$
Moreover, for any given parameter $\lambda > 0$, we introduce the following Moreau identity, which will be used frequently.
\begin{equation}\label{Moreau-identity}
\begin{split}
\text{Prox}_{\lambda p}(\vu)+\lambda \text{Prox}_{ p^*/\lambda}(\vu/\lambda)=\vu.
\end{split}
\end{equation}

Let $l$ be the ordinary Lagrangian function for the problem (\textbf{D})
\begin{equation}\label{Def Lagrangian fun for dual}
l(\vw,\vz,\vx)=
\begin{cases}
\frac{1}{2}\langle \vw, Q\vw\rangle+\delta_{\text{\tiny $K\bigcap L$ }}^*(\vz)-\langle \vx, Q\vw+\vz+\vc \rangle, & \ \vw \in \text{Ran}(Q), \\
+\infty,  &\ \ \text{otherwise}.
\end{cases}
\end{equation}
Then, we define the maximal monotone operators $\cT_{f}$ and $\cT_{l}$  \cite{Rockafellar1976Augmented} by
\begin{equation*}\label{Def_T_f}
\begin{split}
\cT_{f}(\vx):=&\partial f(\vx)=\{\vu_{x}\in \mathbb{R}^{n}|\ \vu_{x} \in   Q\vx+\vc+\partial \delta_{\text{\tiny $K\bigcap L$ }}(\vx)\},\ \forall\, \vx \in \mathbb{R}^{n}\\
\end{split}
\end{equation*}
and
\begin{equation*}\label{Def_T_l}
\begin{split}
\cT_{l}(\vw,\vz,\vx):=&\{(\vu_{w},\vu_{z},\vu_{x})\in \mathbb{R}^{3n}\ |\ (\vu_{w},\vu_{z},-\vu_{x})\in \partial l(\vw,\vz,\vx)\},
\end{split}
\end{equation*}
respectively.
Correspondingly, the inverses of $\cT_{f}$ and $\cT_{l}$ are given, respectively, by
\begin{equation*}\label{set-valued map inverse Tl}
\begin{split}
\cT_{f}^{-1}(\vu_{x}):=&\{\vx\in \mathbb{R}^{n}\ |\ \vu_{x}\in \partial f(\vx)\},\\
\end{split}
\end{equation*}
and
\begin{equation*}\label{set-valued map inverse Tl}
\begin{split}
\cT_{l}^{-1}(\vu_{w},\vu_{z},\vu_{x}):=&\{(\vw,\vz,\vx)\in \mathbb{R}^{3n}\ |\ (\vu_{w},\vu_{z},-\vu_{x})\in \partial l(\vw,\vz,\vx)\}.
\end{split}
\end{equation*}
Recall that a closed proper convex function $g: \mathcal{X}\rightarrow (-\infty,+\infty]$ is said to be \textit{piecewise linear-quadratic} if dom $g$ is the union of finitely many polyhedral sets and on each of these polyhedral sets, $g$ is either an affine or a quadratic function \cite[Definition 10.20]{Rockafellar1998Variational}. Hence the objective function $f$ in (\textbf{P}) is piecewise linear-quadratic. Meanwhile, by \cite[Theorem 11.14]{Rockafellar1998Variational} the support function $\delta_{\text{\tiny $K\bigcap L$ }}^*$ is piecewise linear-quadratic, which implies that $l$ is also piecewise linear-quadratic.

In addition, $F: \mathcal{X} \rightrightarrows \mathcal{Y}$ is called \textit{piecewise polyhedral} if its graph is the union of finitely many polyhedral convex sets. Therefore, according to the following proposition established in \cite{Sun1986monotropic}, both $\cT_{f}(\vx)$ and $\cT_{l}(\vw,\vz,\vx)$ are piecewise polyhedral multivalued mappings.
\begin{proposition}\label{prop:piecewise polyhedral}
\cite{Sun1986monotropic} Let $\mathcal{X}$ be a finite-dimensional real Euclidean space and $\theta: \mathcal{X}\rightarrow(-\infty,+\infty]$ be a closed proper convex function. Then $\theta$ is piecewise linear-quadratic if and only if the graph of $\partial\theta$ is piecewise polyhedral. Moreover, $\theta$ is piecewise linear-quadratic if and only if its conjugate $\theta^{*}$ is piecewise
linear-quadratic.
\end{proposition}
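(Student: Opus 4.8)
The plan is to split the first biconditional into two implications and then obtain the conjugacy clause for free. Write (A) for the implication ``$\theta$ piecewise linear-quadratic (PLQ) $\Rightarrow\ \mathrm{gph}\,\partial\theta$ piecewise polyhedral (pp)'' and (B) for its converse, both asserted for an arbitrary closed proper convex function. Granting (A) and (B), the ``moreover'' clause follows from the inversion rule $\partial\theta^{*}=(\partial\theta)^{-1}$, valid for closed proper convex $\theta$: it gives $\mathrm{gph}\,\partial\theta^{*}=\{(v,x):(x,v)\in\mathrm{gph}\,\partial\theta\}$, so $\mathrm{gph}\,\partial\theta^{*}$ is the image of $\mathrm{gph}\,\partial\theta$ under the coordinate-flip $(x,v)\mapsto(v,x)$. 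Since this flip is a linear isomorphism of $\mathcal{X}\times\mathcal{X}$, and a linear isomorphism sends polyhedral convex sets to polyhedral convex sets and respects finite unions, $\mathrm{gph}\,\partial\theta$ is pp iff $\mathrm{gph}\,\partial\theta^{*}$ is pp. Chaining $\theta$ PLQ $\overset{(A)}{\Rightarrow}\mathrm{gph}\,\partial\theta$ pp $\Rightarrow\mathrm{gph}\,\partial\theta^{*}$ pp $\overset{(B)}{\Rightarrow}\theta^{*}$ PLQ, together with the symmetric chain using $\theta^{**}=\theta$, yields $\theta$ PLQ $\iff\theta^{*}$ PLQ. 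So the proposition reduces entirely to (A) and (B).

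For (A), I would fix a finite polyhedral subdivision $\{P_{i}\}_{i=1}^{m}$ of $\mathrm{dom}\,\theta$ on whose pieces $\theta$ coincides with convex quadratics $q_{i}(x)=\tfrac12\langle x,A_{i}x\rangle+\langle a_{i},x\rangle+\alpha_{i}$, refining it so the affine gradients $\nabla q_{i}(x)=A_{i}x+a_{i}$ agree on shared faces (forced by continuity and convexity of $\theta$). The governing fact is that the normal-cone mapping $N_{P}$ of a polyhedral set $P$ has piecewise polyhedral graph, namely a finite union over the faces $F$ of $P$ of the polyhedral products $F\times N_{P}(F)$, with $N_{P}(F)$ the normal cone constant along $\mathrm{ri}\,F$. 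Applying the convex subdifferential calculus piece by piece, I would show $\mathrm{gph}\,\partial\theta$ decomposes as the finite union, over the pieces $P_{i}$ and their faces $F$, of the sets $\{(x,\,A_{i}x+a_{i}+u):x\in F,\ u\in N_{P_{i}}(F)\}$; each is the image of a polyhedral set under an affine map, hence polyhedral. The one delicate point is the bookkeeping at points lying in several pieces, where $\partial\theta(x)$ must combine the incident affine selections; here convexity guarantees the selections are compatible, so the displayed union indeed reproduces $\mathrm{gph}\,\partial\theta$.

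For (B), I would use that $\theta$ is \emph{already} a given closed proper convex function, so nothing need be reconstructed as an abstract operator: I only need to verify the PLQ structure. Write $\mathrm{gph}\,\partial\theta=\bigcup_{k}G_{k}$ with each $G_{k}$ polyhedral, hence convex. On $\mathrm{int}(\mathrm{dom}\,\theta)$ the map $\partial\theta$ is single-valued, equal to $\nabla\theta$, and its graph lies in $\bigcup_{k}G_{k}$; since a single-valued map whose graph is convex is affine on the relative interior of its domain, $\nabla\theta$ agrees locally with an affine map read off from the active $G_{k}$. Thus $\nabla\theta$ is piecewise affine relative to a finite polyhedral subdivision of $\mathrm{int}(\mathrm{dom}\,\theta)$, and integrating $\nabla\theta=A_{k}x+a_{k}$ on each cell makes $\theta$ a quadratic $q_{k}$ there, the constants being pinned down by continuity. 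Finally $\mathrm{cl}(\mathrm{dom}\,\partial\theta)=\mathrm{cl}(\mathrm{dom}\,\theta)$ is a convex finite union of polyhedra, hence polyhedral, and closedness (lower semicontinuity) of $\theta$ extends the quadratic-on-each-cell description to the relative boundary, giving that $\theta$ is PLQ. I expect this reverse direction to be the main obstacle: converting the purely geometric piecewise-polyhedrality of the graph into the analytic quadratic-on-each-piece structure of $\theta$, and in particular the clean passage from ``convex single-valued graph'' to ``affine gradient'' and the careful extension across the relative boundary, is where the real work lies; implication (A) and the flip argument are comparatively routine.
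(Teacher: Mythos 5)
First, a caveat on the comparison itself: the paper does not prove this proposition at all --- it is quoted with a citation to Sun's thesis \cite{Sun1986monotropic} (it is also Theorem 11.14 in Rockafellar--Wets), so your argument must stand on its own. Your reduction of the ``moreover'' clause is correct and standard: $\partial\theta^{*}=(\partial\theta)^{-1}$ for closed proper convex $\theta$, the coordinate flip is a linear isomorphism that preserves polyhedrality and finite unions, and $\theta^{**}=\theta$ closes the loop. But both main implications contain genuine errors, visible already for $\theta(x)=|x|$. In (A), with pieces $P_{1}=(-\infty,0]$, $P_{2}=[0,\infty)$, $q_{1}(x)=-x$, $q_{2}(x)=x$: first, your preliminary claim that the gradients $\nabla q_{i}$ can be refined to ``agree on shared faces'' is false --- at the shared face $F=\{0\}$ they are $-1$ and $+1$ (only the tangential components of gradients agree across a face; the normal components jump, and no refinement removes this). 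Second, and fatally, your displayed union is not $\mathrm{gph}\,\partial\theta$: since $N_{P_{1}}(\{0\})=[0,+\infty)$ and $N_{P_{2}}(\{0\})=(-\infty,0]$, the sets you form for the pairs $(P_{1},F)$ and $(P_{2},F)$ are $\{0\}\times[-1,+\infty)$ and $\{0\}\times(-\infty,1]$, whose union is $\{0\}\times\mathbb{R}$, whereas $\partial\theta(0)=[-1,1]$. So the union strictly contains the graph, and the sentence ``convexity guarantees the selections are compatible, so the displayed union indeed reproduces $\mathrm{gph}\,\partial\theta$'' is exactly the false step. Over a face $F$ interior to $\mathrm{dom}\,\theta$ the correct fiber is $\mathrm{conv}\{\nabla q_{i}(x):P_{i}\supseteq F\}$ (a normal cone enters only through $N_{\mathrm{dom}\,\theta}$ at faces on the boundary of the domain, never through $N_{P_{i}}$), and showing that these convex hulls of affinely varying generators assemble into finitely many polyhedral sets is the real content of this direction; it requires the monotonicity relations among the gradient jumps across faces, not merely the polyhedrality of $\mathrm{gph}\,N_{P_{i}}$.

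Direction (B) starts from a false statement as well: a convex function need not be differentiable on $\mathrm{int}(\mathrm{dom}\,\theta)$ --- again $|x|$ --- so ``on $\mathrm{int}(\mathrm{dom}\,\theta)$ the map $\partial\theta$ is single-valued, equal to $\nabla\theta$'' cannot be the starting point. The device you then invoke, ``a single-valued map whose graph is convex is affine,'' is applied to a map whose graph is only \emph{contained in} a finite union of convex sets $G_{k}$, not itself convex, so no local affineness follows; and where $\partial\theta$ is genuinely multivalued (which is exactly where the piecewise structure is decided) the argument says nothing. A workable proof of this direction has to operate on the polyhedral pieces $G_{k}\subseteq\mathrm{gph}\,\partial\theta$ themselves: show their $x$-projections give a finite polyhedral cover of $\mathrm{dom}\,\partial\theta$, use the (maximal, cyclic) monotonicity of $\partial\theta$ to extract from each $G_{k}$ an affine-plus-normal-cone structure over its projection, and only then integrate to obtain quadratics on the cells and patch them using lower semicontinuity. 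As written, both (A) and (B) would need to be redone along these lines; only the flip argument for the conjugacy clause survives.
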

In \cite{Robinson1981Some}, Robinson  established the following fundamental property to describe the locally upper Lipschitz continuity of a piecewise polyhedral multivalued mapping.
\begin{proposition}\label{prop locally upper Lipschitz continuity}
\cite{Robinson1981Some} If the multivalued mapping $F:\mathcal{X}\rightrightarrows \mathcal{Y}$ is piecewise polyhedral, then $F$ is locally upper Lipschitz continuous at any $\vx^{0}\in \mathcal{X}$ with modulus $\kappa_{0}$ independent of the choice of $\vx^{0}$. i.e., there exists a neighborhood $V$ of $\vx^{0}$ such that
$F(\vx)\subseteq F(\vx^{0})+\kappa_{0}||\vx-\vx^{0}||\textbf{B}_{\mathcal{Y}},\ \ \forall\, \vx \in V.$
\end{proposition}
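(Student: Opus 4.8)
The plan is to reduce the statement to the classical Lipschitzian error bound for systems of linear inequalities (Hoffman's lemma), applied piece by piece to the finitely many polyhedra that constitute $\text{gph}\,F$. Write $\text{gph}\,F=\bigcup_{i=1}^{m}P_{i}$ with each $P_{i}\subseteq\mathcal{X}\times\mathcal{Y}$ polyhedral convex, and let $F_{i}:\mathcal{X}\rightrightarrows\mathcal{Y}$ be the multifunction with graph $P_{i}$, so that $F(\vx)=\bigcup_{i=1}^{m}F_{i}(\vx)$ for every $\vx$. I would first prove a uniform upper Lipschitz estimate for each single piece $F_{i}$, then use the closedness of the projected domains to suppress the pieces that are inactive at the base point, and finally take $\kappa_{0}$ to be the largest of the single-piece moduli.

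For the single-piece estimate, I claim that for each $i$ there is a constant $\kappa_{i}>0$, depending only on the linear data describing $P_{i}$, such that $F_{i}(\vx)\subseteq F_{i}(\vx')+\kappa_{i}\|\vx-\vx'\|\,\textbf{B}_{\mathcal{Y}}$ whenever $\vx\in\mathcal{X}$ and $\vx'\in\text{dom}\,F_{i}$ (the inclusion being trivial when $F_{i}(\vx)=\emptyset$). To see this, write $P_{i}=\{(\vx,\vy)\,|\,B_{i}\vx+C_{i}\vy\le d_{i}\}$, so that $F_{i}(\vx)=\{\vy\,|\,C_{i}\vy\le d_{i}-B_{i}\vx\}$. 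Fixing $\vy\in F_{i}(\vx)$ and applying Hoffman's error bound to the (feasible, since $\vx'\in\text{dom}\,F_{i}$) system defining $F_{i}(\vx')$ yields $\text{dist}(\vy,F_{i}(\vx'))\le\lambda_{i}\,\|(C_{i}\vy-d_{i}+B_{i}\vx')_{+}\|$ with $\lambda_{i}$ depending only on $C_{i}$. Since $C_{i}\vy-d_{i}+B_{i}\vx\le 0$, the residual is dominated componentwise by $(B_{i}(\vx'-\vx))_{+}$, whence $\text{dist}(\vy,F_{i}(\vx'))\le\lambda_{i}\|B_{i}\|\,\|\vx-\vx'\|$; setting $\kappa_{i}=\lambda_{i}\|B_{i}\|$ proves the claim.

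To localize at a fixed $\vx^{0}$ and combine the pieces, note that $\text{dom}\,F_{i}$ is the image of the polyhedron $P_{i}$ under the coordinate projection onto $\mathcal{X}$, hence is itself polyhedral and in particular closed. Consequently, for each index $i$ with $\vx^{0}\notin\text{dom}\,F_{i}$ there is an open neighborhood $V_{i}$ of $\vx^{0}$ with $V_{i}\cap\text{dom}\,F_{i}=\emptyset$, so that $F_{i}(\vx)=\emptyset$ for every $\vx\in V_{i}$. I would then put $V:=\bigcap\{\,V_{i}\,|\,\vx^{0}\notin\text{dom}\,F_{i}\,\}$ (a finite intersection, hence a neighborhood of $\vx^{0}$; set $V=\mathcal{X}$ if no such $i$ exists) and $\kappa_{0}:=\max_{1\le i\le m}\kappa_{i}$. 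For $\vx\in V$ every inactive piece contributes the empty set, while each active piece ($\vx^{0}\in\text{dom}\,F_{i}$) obeys $F_{i}(\vx)\subseteq F_{i}(\vx^{0})+\kappa_{i}\|\vx-\vx^{0}\|\textbf{B}_{\mathcal{Y}}\subseteq F(\vx^{0})+\kappa_{0}\|\vx-\vx^{0}\|\textbf{B}_{\mathcal{Y}}$. Taking the union over $i$ and using $(\bigcup_{i}A_{i})+S=\bigcup_{i}(A_{i}+S)$ gives $F(\vx)\subseteq F(\vx^{0})+\kappa_{0}\|\vx-\vx^{0}\|\textbf{B}_{\mathcal{Y}}$, which is the asserted inclusion; since $\kappa_{0}$ was assembled from data-dependent constants only, it does not depend on $\vx^{0}$.

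The main obstacle is the single-piece estimate, and specifically the fact that its modulus $\kappa_{i}$ may be chosen to depend only on the matrices $B_{i},C_{i}$ rather than on the particular points $\vx,\vx'$. This is exactly Hoffman's lemma on the Lipschitz dependence of a polyhedral solution set on the right-hand side of its defining inequalities, and it is precisely this uniformity that makes $\kappa_{0}=\max_{i}\kappa_{i}$ independent of the base point $\vx^{0}$. The remaining ingredients — the finite decomposition of the graph, the polyhedrality (hence closedness) of the projected domains, and the choice of $V$ — are routine once the uniform error bound is available.
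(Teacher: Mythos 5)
Your proof is correct. Note, however, that the paper itself does not prove this proposition at all: it is quoted verbatim from Robinson's 1981 work on polyhedral multifunctions, so there is no in-paper argument to compare against. What you have produced is a sound, self-contained reconstruction that follows essentially the classical route: decompose $\text{gph}\,F$ into finitely many convex polyhedral pieces $P_i$, establish a uniform upper Lipschitz bound for each single-piece multifunction $F_i$ via Hoffman's error bound (the key point, which you correctly isolate, being that Hoffman's constant $\lambda_i$ depends only on the coefficient matrix $C_i$ and not on the right-hand side, which is what makes $\kappa_i=\lambda_i\|B_i\|$ and hence $\kappa_0=\max_i\kappa_i$ independent of the base point), discard the inactive pieces near $\vx^{0}$ using the closedness of the polyhedral projections $\text{dom}\,F_i$, and take the maximum modulus. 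All the individual steps check out: the residual estimate $(C_i\vy-d_i+B_i\vx')_{+}\le (B_i(\vx'-\vx))_{+}$ follows from $\vy\in F_i(\vx)$ and monotonicity of the positive part, the distance to the closed set $F_i(\vx')$ is attained so membership in $F_i(\vx')+\kappa_i\|\vx-\vx'\|\textbf{B}_{\mathcal{Y}}$ follows, and the Minkowski-sum identity used to pass from the pieces to the union is valid. In Robinson's original treatment the single-piece step is handled by invoking the Walkup--Wets theorem (a polyhedral convex multifunction is Lipschitzian relative to its domain) rather than by a direct appeal to Hoffman's lemma, but these are two names for essentially the same uniformity phenomenon, so your argument is a faithful and slightly more elementary version of the standard proof.
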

Therefore, the above Propositions \ref{prop:piecewise polyhedral} and \ref{prop locally upper Lipschitz continuity} imply that $\cT_{f}(\vx)$ and $\cT_{l}(\vw,\vz,\vx)$ are both locally upper Lipschitz continuous.
Combining \cite[Theorem 3H.3]{Dontchev2013Implicit}, we have the following result.
\begin{proposition}\label{prop locally metrically subregular}
Assume that the KKT system \eqref{KKT_(DUAL)} has at least one solution. Let $(\bar{\vw},\bar{\vz},\bar{\vx})$ be a solution of the KKT system \eqref{KKT_(DUAL)}. Then $\mathcal{T}_{f}^{-1}$ is metrically subregular at $\bar{\vx}$ for the origin and $\mathcal{T}_{l}^{-1}$ is also metrically subregular at $(\bar{\vw},\bar{\vz},\bar{\vx})$ for the origin, i.e., there exist a neighborhood of origin $\mathcal{V}$ and constants $\kappa_{l} >0,\ \kappa_{f} >0$ along with the neighborhoods $\textbf{B}_{\delta_l}(\bar{\vw},\bar{\vz},\bar{\vx})$ and $\textbf{B}_{\delta_f}(\bar{\vx})$ such that
\begin{eqnarray*}\label{metrically subregular Sun}
\text{dist}\big{(}\vx,\mathcal{T}_{f}^{-1}(\mZero)\big{)} \leq  \kappa_{f}\text{dist}\big{(}\mZero,\mathcal{T}_{f}(\vx)\cap\mathcal{V}\big{)},
\end{eqnarray*}
holds for any $\vx \in \textbf{B}_{\delta_f}(\bar{\vx})$ and
\begin{eqnarray*}\label{metrically subregular Sun}
\text{dist}\big{(}(\vw,\vz,\vx),\mathcal{T}_{l}^{-1}(\mZero)\big{)} \leq  \kappa_{l}\text{dist}\big{(}\mZero,\mathcal{T}_{l}(\vw,\vz,\vx)\cap\mathcal{V}\big{)},
\end{eqnarray*}
holds for any $(\vw,\vz,\vx) \in \textbf{B}_{\delta_l}(\bar{\vw},\bar{\vz},\bar{\vx})$.

\end{proposition}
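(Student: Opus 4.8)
The plan is to deduce both error bounds from Robinson's upper Lipschitz property (Proposition \ref{prop locally upper Lipschitz continuity}) applied not to $\mathcal{T}_f$ and $\mathcal{T}_l$ themselves but to their \emph{inverse} mappings $\mathcal{T}_f^{-1}$ and $\mathcal{T}_l^{-1}$, and then to convert the resulting calmness of the inverses into the stated metric subregularity estimates via the equivalence in \cite[Theorem 3H.3]{Dontchev2013Implicit}. Applying Robinson to the inverse (rather than to $\mathcal{T}_f$ directly) is the key move, since the displayed inequalities are exactly metric subregularity of $\mathcal{T}_f$ and $\mathcal{T}_l$, which corresponds to calmness of $\mathcal{T}_f^{-1}$ and $\mathcal{T}_l^{-1}$.

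The excerpt has already assembled the right hypotheses. Because $f$ and $l$ are piecewise linear-quadratic, Proposition \ref{prop:piecewise polyhedral} makes $\mathcal{T}_f=\partial f$ and $\mathcal{T}_l$ piecewise polyhedral, i.e. their graphs are finite unions of polyhedra. Since the graph of an inverse mapping is just the graph of the mapping with the two coordinate blocks swapped, $\text{gph}\,\mathcal{T}_f^{-1}$ and $\text{gph}\,\mathcal{T}_l^{-1}$ are again finite unions of polyhedra, so $\mathcal{T}_f^{-1}$ and $\mathcal{T}_l^{-1}$ are themselves piecewise polyhedral. I would also record that the base points lie in the relevant solution sets: the prox identity $\bar{\vx}=\text{Prox}_{\delta_{K\cap L}}(\bar{\vx}+\bar{\vz})\Leftrightarrow\bar{\vz}\in\partial\delta_{K\cap L}(\bar{\vx})$, together with $Q\bar{\vw}=Q\bar{\vx}$ and $Q\bar{\vw}+\bar{\vz}+\vc=\mZero$ from \eqref{KKT_(DUAL)}, gives $\mZero\in Q\bar{\vx}+\vc+\partial\delta_{K\cap L}(\bar{\vx})=\mathcal{T}_f(\bar{\vx})$, and analogously $\mZero\in\mathcal{T}_l(\bar{\vw},\bar{\vz},\bar{\vx})$; hence $\mathcal{T}_f^{-1}(\mZero)$ and $\mathcal{T}_l^{-1}(\mZero)$ are nonempty and contain the respective base points, which is where the KKT solvability hypothesis enters.

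Applying Proposition \ref{prop locally upper Lipschitz continuity} to the piecewise polyhedral map $\mathcal{T}_f^{-1}$ at the origin then produces a modulus $\kappa_f$ (uniform in the base point) and a neighborhood $\mathcal{V}$ of $\mZero$ with $\mathcal{T}_f^{-1}(\vu)\subseteq\mathcal{T}_f^{-1}(\mZero)+\kappa_f\|\vu\|\,\textbf{B}$ for all $\vu\in\mathcal{V}$, and likewise for $\mathcal{T}_l^{-1}$ with modulus $\kappa_l$. This is precisely the calmness of $\mathcal{T}_f^{-1}$ at $\mZero$ for $\bar{\vx}$ (resp. of $\mathcal{T}_l^{-1}$ at $\mZero$ for $(\bar{\vw},\bar{\vz},\bar{\vx})$), and by \cite[Theorem 3H.3]{Dontchev2013Implicit}, calmness of $F^{-1}$ at $\bar{y}$ for $\bar{x}$ is equivalent to metric subregularity of $F$ at $\bar{x}$ for $\bar{y}$ with the same modulus. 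Taking $F=\mathcal{T}_f$ with $(\bar{x},\bar{y})=(\bar{\vx},\mZero)$ and $F=\mathcal{T}_l$ with $(\bar{x},\bar{y})=((\bar{\vw},\bar{\vz},\bar{\vx}),\mZero)$ delivers the two displayed inequalities on suitable balls $\textbf{B}_{\delta_f}(\bar{\vx})$ and $\textbf{B}_{\delta_l}(\bar{\vw},\bar{\vz},\bar{\vx})$.

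Alternatively one can bypass the equivalence theorem and read the estimate off directly: for $\vx$ near $\bar{\vx}$ with $\mathcal{T}_f(\vx)\cap\mathcal{V}\neq\emptyset$, choose $\vu\in\mathcal{T}_f(\vx)\cap\mathcal{V}$ realizing $\|\vu\|=\text{dist}(\mZero,\mathcal{T}_f(\vx)\cap\mathcal{V})$, so that $\vx\in\mathcal{T}_f^{-1}(\vu)\subseteq\mathcal{T}_f^{-1}(\mZero)+\kappa_f\|\vu\|\textbf{B}$ and hence $\text{dist}(\vx,\mathcal{T}_f^{-1}(\mZero))\le\kappa_f\|\vu\|$ (if the intersection is empty the right-hand side is $+\infty$ and nothing is needed). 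There is no deep analytic obstacle here; the entire content is that piecewise polyhedrality is inherited by the inverse, so I expect the main difficulty to be purely one of bookkeeping — arranging that the ball $\mathcal{V}$ in the error bound is the neighborhood furnished by Robinson's theorem (shrinking it if needed so a single $\mathcal{V}$ serves both estimates in their respective spaces $\mathbb{R}^n$ and $\mathbb{R}^{3n}$), disposing of the degenerate empty-intersection case, and keeping the base point fixed at a genuine KKT solution rather than an arbitrary point.
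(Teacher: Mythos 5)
Your proposal is correct and follows essentially the same route as the paper, which presents this proposition with no separate proof beyond the preceding two sentences: Propositions \ref{prop:piecewise polyhedral} and \ref{prop locally upper Lipschitz continuity} give the piecewise polyhedrality and local upper Lipschitz continuity, and \cite[Theorem 3H.3]{Dontchev2013Implicit} converts this into the displayed error bounds. Your one refinement is to note that Robinson's property must be invoked for the inverse mappings $\mathcal{T}_{f}^{-1}$ and $\mathcal{T}_{l}^{-1}$ (piecewise polyhedral because inverting a mapping merely swaps the coordinate blocks of its graph), which is the correct reading of the paper's looser wording ("$\mathcal{T}_{f}$ and $\mathcal{T}_{l}$ are locally upper Lipschitz continuous") and makes the passage through the calmness/metric-subregularity equivalence airtight.
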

Besides, we may go one step further to present the error bound condition in some semilocal sense. Although Zhang et al. \cite{Zhang2017An} presented a similar result without a proof, for the sake of completeness, we still present the detailed results and the proof.
\begin{proposition}\label{proposition1}
For any $r >0$ and $(\bar{\vw},\bar{\vz},\bar{\vx})\in \mathcal{T}_{l}^{-1}(\mZero)$, there exist $\kappa_{f}(r) >0$ and $\kappa_{l}(r) >0$ such that
\begin{subequations}\label{Def bar_w and bar_v}
\begin{align}
\text{dist}(\vx,\mathcal{T}_{f}^{-1}(\mZero))&\leq \kappa_{f}(r)\ \text{dist}(\mZero,\mathcal{T}_{f}(\vx)), \label{Def sm T_f}\\
\text{dist}\big{(}(\vw,\vz,\vx),\mathcal{T}_{l}^{-1}(\mZero)\big{)}&\leq \kappa_{l}(r)\text{dist}\big{(}\mZero,\mathcal{T}_{l}(\vw,\vz,\vx)\big{)}.\label{Def sm T_l}
\end{align}
\end{subequations}
hold for any $\vx\in \mathcal{R}^{n}$ with $\text{dist}(\vx,\mathcal{T}_{f}^{-1}(\mZero))\leq r$ and $||(\vw,\vz,\vx)-(\bar{\vw},\bar{\vz},\bar{\vx})||\leq r$.
\end{proposition}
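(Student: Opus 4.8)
The plan is to derive the two semilocal error bounds \eqref{Def sm T_f} and \eqref{Def sm T_l} from the purely local metric subregularity supplied by Proposition \ref{prop locally metrically subregular} together with a compactness/contradiction argument; the two inequalities are proved by the same mechanism, so I would carry out \eqref{Def sm T_f} in detail and then transcribe it for \eqref{Def sm T_l}. Fix $r>0$ and suppose \eqref{Def sm T_f} fails for every choice of constant. Then there is a sequence $\{\vx^{k}\}$ with $\text{dist}(\vx^{k},\mathcal{T}_{f}^{-1}(\mZero))\leq r$ and $\text{dist}(\vx^{k},\mathcal{T}_{f}^{-1}(\mZero))>k\,\text{dist}(\mZero,\mathcal{T}_{f}(\vx^{k}))$ for each $k$. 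Finiteness of the right-hand side forces $\mathcal{T}_{f}(\vx^{k})=\partial f(\vx^{k})\neq\emptyset$, hence $\vx^{k}\in K\cap L$, and the bound $\text{dist}(\vx^{k},\mathcal{T}_{f}^{-1}(\mZero))\leq r$ then shows $\text{dist}(\mZero,\mathcal{T}_{f}(\vx^{k}))<r/k\to 0$.

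Next I would invoke compactness. Because $K\cap L$ sits inside the box $K$ it is bounded, so $\mathcal{T}_{f}^{-1}(\mZero)\subseteq K\cap L$ is bounded and the region $\{\vx:\text{dist}(\vx,\mathcal{T}_{f}^{-1}(\mZero))\leq r\}$ is compact. Passing to a subsequence I may assume $\vx^{k}\to\vx^{*}$ with $\text{dist}(\vx^{*},\mathcal{T}_{f}^{-1}(\mZero))\leq r$. Set $\vu^{k}:=\Pi_{\mathcal{T}_{f}(\vx^{k})}(\mZero)$, so that $\|\vu^{k}\|=\text{dist}(\mZero,\mathcal{T}_{f}(\vx^{k}))\to 0$. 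Since $\mathcal{T}_{f}=\partial f$ is maximal monotone its graph is closed, and $(\vx^{k},\vu^{k})\to(\vx^{*},\mZero)$ yields $\mZero\in\partial f(\vx^{*})$, i.e. $\vx^{*}\in\mathcal{T}_{f}^{-1}(\mZero)$.

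Now $\vx^{*}$ is a solution, so Proposition \ref{prop locally metrically subregular}, applied at $\vx^{*}$ in place of $\bar{\vx}$ (it holds at every solution, since $\mathcal{T}_{f}$ is piecewise polyhedral), furnishes $\kappa_{f}>0$, $\delta_{f}>0$ and a neighborhood $\mathcal{V}$ of the origin with $\text{dist}(\vx,\mathcal{T}_{f}^{-1}(\mZero))\leq\kappa_{f}\,\text{dist}(\mZero,\mathcal{T}_{f}(\vx)\cap\mathcal{V})$ for $\vx\in\textbf{B}_{\delta_{f}}(\vx^{*})$. For all large $k$ we have $\vx^{k}\in\textbf{B}_{\delta_{f}}(\vx^{*})$ and, because $\|\vu^{k}\|\to 0$, the minimal-norm element $\vu^{k}$ lies in $\mathcal{V}$; hence $\text{dist}(\mZero,\mathcal{T}_{f}(\vx^{k})\cap\mathcal{V})=\|\vu^{k}\|=\text{dist}(\mZero,\mathcal{T}_{f}(\vx^{k}))$, and the local bound gives $\text{dist}(\vx^{k},\mathcal{T}_{f}^{-1}(\mZero))\leq\kappa_{f}\,\text{dist}(\mZero,\mathcal{T}_{f}(\vx^{k}))$. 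Combined with the defining inequality of the sequence, and dividing by the strictly positive $\text{dist}(\mZero,\mathcal{T}_{f}(\vx^{k}))$, this reads $k<\kappa_{f}$ for all large $k$, a contradiction; taking $\kappa_{f}(r)$ to be any finite admissible constant then proves \eqref{Def sm T_f}.

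For \eqref{Def sm T_l} the argument is identical once $\vx$ is replaced by $(\vw,\vz,\vx)$ and $\mathcal{T}_{f}$ by $\mathcal{T}_{l}$: the region $\{\,\|(\vw,\vz,\vx)-(\bar{\vw},\bar{\vz},\bar{\vx})\|\leq r\,\}$ is an explicit compact ball, $\mathcal{T}_{l}$ is maximal monotone with closed graph, and Proposition \ref{prop locally metrically subregular} again applies at the limiting KKT point. I expect the main subtlety to be precisely the two features that separate the semilocal claim from Proposition \ref{prop locally metrically subregular}: first, removing the range localization $\cap\mathcal{V}$, which is legitimate only because the minimal residual $\vu^{k}$ itself tends to $\mZero$ and so eventually enters $\mathcal{V}$; and second, that the limit point produced by compactness need not be the prescribed solution $(\bar{\vw},\bar{\vz},\bar{\vx})$, so one must know that local subregularity holds at \emph{every} element of $\mathcal{T}_{l}^{-1}(\mZero)$ --- which is exactly what the piecewise polyhedrality from Proposition \ref{prop:piecewise polyhedral} guarantees.
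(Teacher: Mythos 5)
Your proof is correct and follows essentially the same route as the paper's: a contradiction argument that uses compactness of $\mathcal{T}_{f}^{-1}(\mZero)$ (resp.\ of the ball around $(\bar{\vw},\bar{\vz},\bar{\vx})$) to extract a convergent subsequence, identifies the limit as a solution, and then applies the local metric subregularity of Proposition \ref{prop locally metrically subregular} at that limit point, discarding the localization $\cap\,\mathcal{V}$ because the minimal-norm residuals tend to $\mZero$. Your two explicit refinements---the closed-graph argument showing $\vx^{*}\in\mathcal{T}_{f}^{-1}(\mZero)$, and the remark that subregularity holds at \emph{every} solution (not just the prescribed one) by piecewise polyhedrality---are precisely the steps the paper carries out implicitly, so this is the same proof, stated slightly more carefully.
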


\begin{proof}
For the sake of contradiction, we assume that the first assertion  about inequality \eqref{Def sm T_f} is false. Then for some $\tilde{r} >0$ and any $\kappa_{f}(\tilde{r})=k >0$, there exists
$\vx^{k}\in \mathbb{R}^{n}$ with $\text{dist}(\vx^k,\cT_{f}^{-1}(\mZero))\leq \tilde{r}$ such that
\begin{equation}\label{error bound_temp1_contradiction00}
\begin{split}
\text{dist}(\vx^k,\cT_{f}^{-1}(\mZero))> k \text{dist}(\mZero,\mathcal{T}_{f}(\vx^k)).
\end{split}
\end{equation}
Next, by using the fact that $\cT_{f}^{-1}(\mZero)$ is compact, we know that $\{\vx^{k}\}$ is a bounded sequence. Therefore, there exists a subsequence $\{\vx^{k_j}\}$ such that $\vx^{k_j}\rightarrow \vx^{*}$ as $k_j\rightarrow + \infty$. Then, together with \eqref{error bound_temp1_contradiction00}, we have
\begin{equation*}\label{metrically subregular_temp3}
\begin{split}
0\leq \text{dist}\big{(}\mZero,\cT_{f}(\vx^{*})\big{)}=\lim_{k_j\rightarrow + \infty}\text{dist}(\mZero,\mathcal{T}_{f}(\vx^{k_j}))\leq
\lim_{k_j\rightarrow + \infty}\frac{\text{dist}(\vx^{k_j},\cT_{f}^{-1}(\mZero))}{{k_j}}
\leq \lim_{k_j\rightarrow + \infty}\frac{\tilde{r}}{k_j}=0,
\end{split}
\end{equation*}
which implies that $\vx^{*}\in \cT_{f}^{-1}(\mZero)$. Moreover, there exists $\bar{k}>\kappa_{f}$ such that $||\vx^{k_j}-\vx^{*}||\leq \delta_{f}$ holds for all $k_j>\bar{k}$, where the parameters $\kappa_{f}$ and $\delta_{f}$ have been defined in Proposition \ref{prop locally metrically subregular}. Thus, for some $\varepsilon>0$ and all $k_j>\max\{\bar{k},\frac{\tilde{r}}{\varepsilon}\}$, we have
\begin{equation*}\label{error bound temp_2}
\begin{split}
 k_j \text{dist}(\mZero,\mathcal{T}_{f}(\vx^{k_j}))\leq \text{dist}(\vx^{k_j},\cT_{f}^{-1}(\mZero))\leq \kappa_{f}\text{dist}(\mZero,\mathcal{T}_{f}(\vx^{k_j})\cap \cB_{\varepsilon}(\mZero))=\kappa_{f}\text{dist}(\mZero,\mathcal{T}_{f}(\vx^{k_j})),
\end{split}
\end{equation*}
which, together with $k_j>\bar{k}>\kappa_{f}$, is a contradiction. Hence, the first assertion is true.

Similarly, for the sake of contradiction, we also assume that the second assertion about inequality \eqref{Def sm T_l} is false. Then, there exist $\tilde{r}>0$ and $(\tilde{\vw},\tilde{\vz},\tilde{\vx})\in \cT_{l}^{-1}(\mZero)$, for any $\kappa_{l}(\tilde{r})=k >0$, such that
\begin{equation}\label{metrically subregular_temp1_contradiction0}
\begin{split}
\text{dist}\big{(}(\vw^{k},\vz^{k},\vx^{k}),\cT_{l}^{-1}(\mZero)\big{)}> k \text{dist}\big{(}\mZero,\cT_{l}(\vw^{k},\vz^{k},\vx^{k})\big{)},\ \exists\, (\vw^{k},\vz^{k},\vx^{k}) \in \cB_{\tilde{r}}(\tilde{\vw},\tilde{\vz},\tilde{\vx}).
\end{split}
\end{equation}
Note that $\{(\vw^{k},\vz^{k},\vx^{k})\}$ is a bounded sequence. Hence, there exists a subsequence $\{(\vw^{k_i},\vz^{k_i},\vx^{k_i})\}$ such that $(\vw^{k_i},\vz^{k_i},\vx^{k_i})\rightarrow (\tilde{\vw}^{*},\tilde{\vz}^{*},\tilde{\vx}^{*})$ as $k_i\rightarrow + \infty$. Then we have
\begin{equation}\label{metrically subregular_temp2}
\begin{split}
\text{dist}\big{(}\mZero,\cT_{l}(\vw^{k_i},\vz^{k_i},\vx^{k_i})\big{)}
  < \frac{\text{dist}\big{(}(\vw^{k_i},\vz^{k_i},\vx^{k_i}),\cT_{l}^{-1}(\mZero)\big{)}}{k_i}.
\end{split}
\end{equation}
Now taking the limits on both sides of the inequality \eqref{metrically subregular_temp2}, we have
\begin{equation*}\label{metrically subregular_temp3}
\begin{split}
0\leq \text{dist}\big{(}\mZero,\cT_{l}(\tilde{\vw}^{*},\tilde{\vz}^{*},\tilde{\vx}^{*})\big{)}
\leq \lim_{k_i\rightarrow + \infty}\frac{\text{dist}\big{(}(\vw^{k_i},\vz^{k_i},\vx^{k_i}),\cT_{l}^{-1}(\mZero)\big{)}}{k_i}=0,
\end{split}
\end{equation*}
which implies that $(\tilde{\vw}^{*},\tilde{\vz}^{*},\tilde{\vx}^{*})\in \cT_{l}^{-1}(\mZero)$. Note that $\cT_{l}^{-1}$ is metrically subregular at $(\tilde{\vw}^{*},\tilde{\vz}^{*},\tilde{\vx}^{*})$ for the origin. Then, for all $k_i$ sufficiently large and some $\varepsilon>0$, we have
\begin{equation*}\label{metrically subregular_temp3}
\begin{split}
\text{dist}\big{(}(\vw^{k_i},\vz^{k_i},\vx^{k_i}),\cT_{l}^{-1}(\mZero)\big{)}
\leq& \kappa_{l}\text{dist}\big{(}\mZero,\cT_{l}(\vw^{k_i},\vz^{k_i},\vx^{k_i})\cap\cB_{\varepsilon}(\mZero)\big{)}
=\kappa_{l}\text{dist}\big{(}\mZero,\cT_{l}(\vw^{k_i},\vz^{k_i},\vx^{k_i})\big{)}.
\end{split}
\end{equation*}
On the other hand, \eqref{metrically subregular_temp1_contradiction0} implies that
\begin{equation*}\label{metrically subregular_temp4}
\begin{split}
\text{dist}\big{(}(\vw^{k_i},\vz^{k_i},\vx^{k_i}),\cT_{l}^{-1}(\mZero)\big{)}> k_i \text{dist}\big{(}\mZero,\cT_{l}(\vw^{k_i},\vz^{k_i},\vx^{k_i})\big{)}.
\end{split}
\end{equation*}
Thus, by taking $k_i>\text{max}(\varepsilon,\kappa_{l})$, we obtain a contradiction. So \eqref{Def sm T_l} is true.
\end{proof}

\section{The SSsNAL method for the SVM problems}\label{section_3}

In this section, we detailedly discuss how to apply the SSsNAL method to solve the problem (\textbf{D}) and establish its convergence theories.

\subsection{The SSsNAL method for the problem (\textbf{D})}
\label{subsec:SSsNAL}

Firstly, we provide the framework of the SSsNAL method. Given $\sigma>0$, the AL function associated with the problem (\textbf{D}) is given as follows
\begin{eqnarray*}\label{augmented Lagrangian function of D}
\mathcal{L}_{\sigma}(\vw,\vz;\vx)&=&\frac{1}{2}\langle \vw, Q\vw\rangle+\delta^{*}_{\text{\tiny $K\bigcap L$ }}(\vz)+\frac{\sigma}{2}||Q\vw+\vz+\vc-\frac{1}{\sigma}\vx||^{2}-\frac{1}{2\sigma}||\vx||^{2},
\end{eqnarray*}
where $(\vw,\vz,\vx) \in \text{Ran}(Q) \times \mathcal{R}^{n} \times \mathcal{R}^{n} $. The SSsNAL method for solving the problem (\textbf{D}) can be sketched as below.

\begin{algorithm}[H]
\caption{\textbf{:} the SSsNAL method for the problem (\textbf{D})}\label{Def Algorithm ALM}
\begin{algorithmic}
\State Let $\sigma_{0}>0$ be a given  parameter. Choose $\vx^0 \in \mathcal{R}^{n}$. For $k=0,1,2,\ldots,$ generate $(\vw^{k+1},\vz^{k+1})$ and $\vx^{k+1}$ by executing the following iterations:
\State \textbf{Step 1. } Apply the SsN method to compute
\begin{equation}\label{eq inner problem1}
(\vw^{k+1},\vz^{k+1})\approx\operatorname*{argmin}\limits_{\vw \in \text{Ran}(Q), \vz \in \mathbb{R}^{n}}~\big{\{}\Psi^{k}(\vw,\vz):= \mathcal{L}_{\sigma_{k}}(\vw,\vz;\vx^{k})\big{\}}.
\end{equation}
\State \textbf{Step 2. } Compute
\begin{equation*}\label{Def_updata_vxk}
\vx^{k+1}=\vx^{k}-\sigma_{k}(Q\vw^{k+1}+\vz^{k+1}+\vc),
\end{equation*}
and update $\sigma_{k+1}$.
\end{algorithmic}
\end{algorithm}

  Notably, the inner subproblem \eqref{eq inner problem1} has no closed-form solution in general. So we consider how to solve it approximately with the following stopping criteria introduced in \cite{Rockafellar1976Augmented,Rockafellar1976Monotone}:
\begin{equation*}\label{Def_stop_cond_1}
\begin{split}
\text{(A)}&\hspace*{2mm} \Psi^{k}(\vw^{k+1},\vz^{k+1})-\inf_{\vw \in  \text{Ran}(\mathcal{Q}),\vz}{\Psi^{k}(\vw,\vz)}\leq \epsilon_{k}^{2}/(2\sigma_{k}),\,\sum_{k=1}^{+\infty}\epsilon_{k}<+\infty,\\
\text{(B)}&\hspace*{2mm} \Psi^{k}(\vw^{k+1},\vz^{k+1})-\inf_{\vw \in  \text{Ran}(\mathcal{Q}),\vz}{\Psi^{k}(\vw,\vz)}\leq \delta_{k}^{2}/(2\sigma_{k})||\vx^{k+1}-\vx^{k}||^2,\,\,\sum_{k=1}^{+\infty}\delta_{k}<+\infty.
\end{split}
\end{equation*}
Since $\inf_{\vw \in  \text{Ran}(\mathcal{Q}),\vz}\Psi^{k}(\vw,\vz)$ is unknown, in order to apply the stopping criteria (A) and (B) in Algorithm \ref{Def Algorithm ALM}, we need to further analyze the following optimization problem.

\begin{equation*}\label{Def_inner_problem}
\min \big{\{}\Psi^{k}(\vw,\vz)| (\vw,\vz) \in \text{Ran}(Q) \times \mathbb{R}^{n} \big{\}}.
\end{equation*}
Obviously, it is easily seen from the definition of $\Psi^{k}(\vw,\vz)$ in \eqref{eq inner problem1} that the above problem has a unique optimal solution in $\text{Ran}(\mathcal{Q}) \times \mathbb{R}^{n}$. For any $\vw \in \text{Ran}(Q)$, we define
\begin{equation}\label{Def_psi(w)}
\begin{split}
\psi^{k}(\vw):=&\inf_{\vz \in \mathbb{R}^{n}}\Psi^{k}(\vw,\vz)\\
=&\frac{1}{2}\langle \vw, Q\vw\rangle+\delta^{*}_{\text{\tiny $K\bigcap L$ }}(\text{Prox}_{\frac{1}{\sigma_{k}}\delta^{*}_{\text{\tiny $K\bigcap L$ }}}(\vu(\vw)/\sigma_{k}))\\
&+\frac{\sigma_{k}}{2}||\text{Prox}_{\frac{1}{\sigma_{k}}\delta^{*}_{\text{\tiny $K\bigcap L$ }}}(\vu(\vw)/\sigma_{k})-\vu(\vw)/\sigma_{k}||^{2}-\frac{1}{2\sigma_{k}}||\vx^{k}||^{2},\\
=&\frac{1}{2}\langle \vw,Q\vw\rangle+\frac{1}{2\sigma_{k}}\left(||\vu(\vw)||^2-||\vu(\vw)-\Pi_{\text{\tiny $K\bigcap L$ }}(\vu(\vw))||^2\right)-\frac{1}{2\sigma_{k}}||\vx^{k}||^{2},\\
\end{split}
\end{equation}
where $\vu(\vw):=\vx^{k}-\sigma_{k}(Q\vw+\vc)$ and the last equality directly follows from $(2.2)$ in \cite{hiriart1984generalized}. Then $(\vw^{k+1},\vz^{k+1})$ in Step 1 of Algorithm \ref{Def Algorithm ALM} can be obtained in the following manner
\begin{subequations}\label{Def bar_w and bar_v}
\begin{align}
\vw^{k+1}&\approx\arg\min\{\psi^{k}(\vw)|\vw \in \text{Ran}(Q)\},\label{Def bar_w and bar_v1}\\
\vz^{k+1}&=\text{Prox}_{\frac{1}{\sigma_{k}}\delta^{*}_{\text{\tiny $K\bigcap L$ }}}(\vu(\vw^{k+1})/\sigma_{k})=\sigma_{k}^{-1}\big{(}\vu(\vw^{k+1})-\Pi_{\text{\tiny $K\bigcap L$ }}(\vu(\vw^{k+1}))\big{)},\label{Def bar_w and bar_v2}
\end{align}
\end{subequations}
where the last equality in \eqref{Def bar_w and bar_v2} follows from the Moreau identity \eqref{Moreau-identity}. Moreover, in combination with \eqref{Def bar_w and bar_v2}, the update in Step 2 of Algorithm \ref{Def Algorithm ALM} can be simplified as
\begin{equation}\label{Def_new_update_xkplus}
\vx^{k+1}=\vx^{k}-\sigma_{k}\left(Q\vw^{k+1}+\sigma_{k}^{-1}\left(\vu(\vw^{k+1})-\Pi_{\text{\tiny $K\bigcap L$ }}(\vu(\vw^{k+1}))\right)+\vc\right)=\Pi_{\text{\tiny $K\bigcap L$ }}(\vu(\vw^{k+1})).
\end{equation}

 Finally, note that $\psi^{k}(\vw)$ is continuously differentiable and strongly convex with modulus $\tilde{\lambda}_{\min}(Q)$ in $\text{Ran}(Q)$, where $\tilde{\lambda}_{\min}(Q)$ is the minimum nonzero eigenvalue of $Q$. Then we have
\begin{equation}\label{Def_stop_cond_mid}
\begin{split}
&\Psi^{k}(\vw^{k+1},\vz^{k+1})-\inf_{\vw\in  \text{Ran}(\mathcal{Q}),\vz}{\Psi^{k}(\vw,\vz)}\\
&=\psi^{k}(\vw^{k+1})-\inf_{\vw\in  \text{Ran}(\mathcal{Q})}{\psi^{k}(\vw)}
\leq \frac{1}{2\tilde{\lambda}_{\min}(Q)}||\nabla \psi^{k}(\vw^{k+1})||^2,
\end{split}
\end{equation}
where the last inequality is due to Theorem $2.1.10$ in \cite{Nesterov2004Introductory} and the fact that $\nabla \psi^{k}(\vw^{*})=\vzero$ with $\vw^{*}=\arg\min_{\vw\in  \text{Ran}(\mathcal{Q})}{\psi^{k}(\vw)}$. Therefore, we replace the above stopping criteria (A) and (B) by the following easy-to-check criteria
\begin{equation*}\label{Def_stop_cond_2}
\begin{split}
(\text{A}')&\hspace*{2mm} ||\nabla \psi^{k}(\vw^{k+1})|| \leq \sqrt{\tilde{\lambda}_{\min}(Q)} \epsilon_{k}/ \sqrt{\sigma_{k}},\ \epsilon_{k}\geq 0,\ \sum_{k=1}^{+\infty}\epsilon_{k} < +\infty,\\
(\text{B}')&\hspace*{2mm} ||\nabla \psi^{k}(\vw^{k+1})|| \leq \sqrt{\tilde{\lambda}_{\min}(Q)}\delta_{k}/\sqrt{\sigma_{k}}||\vx^{k+1}-\vx^{k}||,\ \delta_{k}\geq 0,\, \sum_{k=1}^{+\infty}\delta_{k}<+\infty.
\end{split}
\end{equation*}

Next, we shall adapt the results developed in \cite{Rockafellar1976Augmented,Rockafellar1976Monotone,luque1984asymptotic} to establish the convergence theory of the AL method for the problem (\textbf{D}). Take a positive scalar $r$ such that $\sum_{k=1}^{+\infty}{\epsilon_{k}} < r$, then we can state the convergence theory as below.
\begin{theorem}\label{theorem4_2}
Let $\{(\vw^k,\vz^k,\vx^k)\}$ be any infinite sequence generated by Algorithm \ref{Def Algorithm ALM} with stopping criterion $(\text{A}')$ and $(\text{B}')$ for solving subproblem \eqref{Def bar_w and bar_v1}. Let $\Omega_{P}$ be the solution set of (\textbf{P}) and $(\vw^*,\vz^*)$ be the unique optimal solution of (\textbf{D}).
Then the sequence $\{\vx^k\}$ converges to $\vx^*\in\Omega_{P}$ and the sequence $(\vw^k,\vz^k)$ converges to the unique optimal solution $(\vw^*,\vz^*)$. Moreover, if $\text{dist}(\vx^{0},\Omega_{P}) \leq r-\sum_{k=1}^{+\infty}{\epsilon_{k}}$, then for all $k>0$ 
\begin{subequations}
\begin{align}
&\text{dist}(\vx^{k+1},\Omega_{P}) \leq \mu_{k}\text{dist}(\vx^{k},\Omega_{P}),\label{Def_sub_eq1}\\
&||(\vw^{k+1},\vz^{k+1})-(\vw^*,\vz^*)||\leq \mu'_{k}||\vx^{k+1}-\vx^{k}||,\label{Def_sub_eq2}
\end{align}
\end{subequations}
where
$\mu_{k}:=\big{[}\delta_{k}+(1+\delta_{k})\kappa_{f}(r)/\sqrt{\kappa^{2}_{f}(r)+\sigma_k^2}\big{]}/(1-\delta_{k})\rightarrow \mu_{\infty}:=\kappa_{f}(r)/\sqrt{\kappa^{2}_{f}(r)+\sigma_{\infty}^2}$,
$\mu'_{k}:=\ \kappa_{l}(r)\sqrt{1/\sigma^{2}_{k}+\tilde{\lambda}_{\min}(Q)\delta^{2}_{k}/\sigma_{k}}\rightarrow
\mu'_{\infty}:=\kappa_{l}(r)/\sigma_{\infty}$,
$\kappa_{f}(r),\ \kappa_{l}(r)>0$ are constant and the parameter $r$ is determined by Proposition \ref{proposition1}. Moreover, $\mu_{k}$ and $\mu'_{k}$ go to $0$ as $\sigma_{k}\uparrow\sigma_{\infty}=+\infty$.
\end{theorem}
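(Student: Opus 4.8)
The plan is to recast Algorithm \ref{Def Algorithm ALM} as an inexact proximal point algorithm (PPA) applied to the maximal monotone operator $\cT_{f}$, and then to combine Rockafellar's convergence theory with the semilocal error bounds of Proposition \ref{proposition1}. First I would observe that, by the update \eqref{Def_new_update_xkplus} together with the optimality characterization of $\vz^{k+1}$, the outer iterate satisfies $\vx^{k+1}\approx\hat{\vx}^{k+1}:=(I+\sigma_{k}\cT_{f})^{-1}(\vx^{k})$, i.e.\ $\vx^{k+1}$ approximates the exact proximal point $\hat{\vx}^{k+1}$ characterized by $\tfrac{1}{\sigma_{k}}(\vx^{k}-\hat{\vx}^{k+1})\in\cT_{f}(\hat{\vx}^{k+1})$. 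The first task is to show that the checkable criteria $(\text{A}')$ and $(\text{B}')$ imply the PPA stopping rules. The inequality \eqref{Def_stop_cond_mid}, bounding the subproblem suboptimality by $\tfrac{1}{2\tilde{\lambda}_{\min}(Q)}\|\nabla\psi^{k}(\vw^{k+1})\|^{2}$, converts $(\text{A}')$ and $(\text{B}')$ into the suboptimality criteria (A) and (B); a standard estimate for the dual augmented Lagrangian, $\tfrac{1}{2\sigma_{k}}\|\vx^{k+1}-\hat{\vx}^{k+1}\|^{2}\le\Psi^{k}(\vw^{k+1},\vz^{k+1})-\inf\Psi^{k}$, then yields $\|\vx^{k+1}-\hat{\vx}^{k+1}\|\le\epsilon_{k}$ under (A) and $\|\vx^{k+1}-\hat{\vx}^{k+1}\|\le\delta_{k}\|\vx^{k+1}-\vx^{k}\|$ under (B), which are exactly the distance-based rules of the inexact PPA.

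With the inexact-PPA reformulation in hand, global convergence of $\{\vx^{k}\}$ to some $\vx^{*}\in\cT_{f}^{-1}(\mZero)=\Omega_{P}$ follows from the classical analysis in \cite{Rockafellar1976Monotone,Rockafellar1976Augmented,luque1984asymptotic} under criterion (A) and the nonemptiness of the KKT set; in particular $\|\vx^{k+1}-\vx^{k}\|\to0$. Convergence of $(\vw^{k},\vz^{k})$ to the unique dual optimum $(\vw^{*},\vz^{*})$ I would obtain as a by-product of the rate bound \eqref{Def_sub_eq2} derived below, since its right-hand side $\mu'_{k}\|\vx^{k+1}-\vx^{k}\|$ then tends to zero.

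For the linear rates I would exploit Proposition \ref{proposition1}. For \eqref{Def_sub_eq1}, treat the exact step first: $\tfrac{1}{\sigma_{k}}(\vx^{k}-\hat{\vx}^{k+1})\in\cT_{f}(\hat{\vx}^{k+1})$ combined with \eqref{Def sm T_f} gives $\text{dist}(\hat{\vx}^{k+1},\Omega_{P})\le\tfrac{\kappa_{f}(r)}{\sigma_{k}}\|\vx^{k}-\hat{\vx}^{k+1}\|$, while firm nonexpansiveness of the resolvent, $\text{dist}^{2}(\hat{\vx}^{k+1},\Omega_{P})+\|\vx^{k}-\hat{\vx}^{k+1}\|^{2}\le\text{dist}^{2}(\vx^{k},\Omega_{P})$, yields the exact contraction factor $\kappa_{f}(r)/\sqrt{\kappa_{f}^{2}(r)+\sigma_{k}^{2}}=\mu_{\infty}$; feeding in $\|\vx^{k+1}-\hat{\vx}^{k+1}\|\le\delta_{k}\|\vx^{k+1}-\vx^{k}\|$ and rearranging produces $\mu_{k}$. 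For \eqref{Def_sub_eq2}, I would compute the residual of $\cT_{l}$ at $(\vw^{k+1},\vz^{k+1},\vx^{k+1})$: its $\vz$-component vanishes exactly (optimality of $\vz^{k+1}$ gives $\vx^{k+1}\in\partial\delta^{*}_{K\cap L}(\vz^{k+1})$), its $\vx$-component equals $\tfrac{1}{\sigma_{k}}(\vx^{k}-\vx^{k+1})$ by Step 2, and its $\vw$-component equals $\nabla\psi^{k}(\vw^{k+1})=Q(\vw^{k+1}-\vx^{k+1})$. Hence $(\text{B}')$ gives $\text{dist}(\mZero,\cT_{l}(\vw^{k+1},\vz^{k+1},\vx^{k+1}))\le\sqrt{1/\sigma_{k}^{2}+\tilde{\lambda}_{\min}(Q)\delta_{k}^{2}/\sigma_{k}}\,\|\vx^{k+1}-\vx^{k}\|$, and since the $(\vw,\vz)$-part of every KKT point equals the unique $(\vw^{*},\vz^{*})$, the error bound \eqref{Def sm T_l} dominates $\|(\vw^{k+1},\vz^{k+1})-(\vw^{*},\vz^{*})\|$ by $\mu'_{k}\|\vx^{k+1}-\vx^{k}\|$.

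The main obstacle I anticipate is not the algebra but justifying that the \emph{semilocal} error bounds of Proposition \ref{proposition1} apply with a single uniform modulus $\kappa_{f}(r)$ (resp.\ $\kappa_{l}(r)$) at every iteration: this requires showing that, under $\text{dist}(\vx^{0},\Omega_{P})\le r-\sum_{k}\epsilon_{k}$, the whole sequence $\{\vx^{k}\}$ and the exact proximal points $\hat{\vx}^{k+1}$ remain inside the radius-$r$ tube about $\Omega_{P}$, and likewise that each $(\vw^{k+1},\vz^{k+1},\vx^{k+1})$ stays within distance $r$ of a KKT point. I would establish this by induction using the approximate Fej\'er-type estimate $\text{dist}(\vx^{k+1},\Omega_{P})\le\text{dist}(\vx^{k},\Omega_{P})+\epsilon_{k}$ inherent in the inexact PPA, so the accumulated error $\sum_{k}\epsilon_{k}$ never pushes the iterates out of the tube. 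Once tube-invariance is secured, the constants $\kappa_{f}(r),\kappa_{l}(r)$ are fixed along the iteration, the stated limits $\mu_{k}\to\mu_{\infty}$ and $\mu'_{k}\to\mu'_{\infty}$ as $\sigma_{k}\uparrow\sigma_{\infty}$ follow by inspection, and $\mu_{\infty}=\mu'_{\infty}=0$ when $\sigma_{\infty}=+\infty$.
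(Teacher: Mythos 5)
Your proposal is correct and follows essentially the same route as the paper: the inexact-PPA interpretation with Rockafellar's theory for global convergence, the $\cT_{f}$ error bound of Proposition \ref{proposition1} combined with resolvent firm nonexpansiveness (the Luque/Zhang-type argument the paper cites without detail) for \eqref{Def_sub_eq1}, and the $\cT_{l}$ error bound applied to the KKT residual at $(\vw^{k+1},\vz^{k+1},\vx^{k+1})$ for \eqref{Def_sub_eq2}, where your direct computation of the $\vw$-, $\vz$-, and $\vx$-components of that residual reproduces exactly what the paper obtains by citing Rockafellar's estimate $(4.21)$ together with the Danskin-type identity. The only notable difference is that you handle the tube-invariance needed to invoke the semilocal bounds with a uniform modulus explicitly by induction, a point the paper glosses over by appealing to convergence of the iterates; this makes your argument slightly more careful, not different in substance.
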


\begin{proof}
The statements on the global convergence directly follow from \cite{Rockafellar1976Augmented}.
 The proof for the first inequality \eqref{Def_sub_eq1} follows from the similar idea of the proof in \cite[Lemma 4.1]{Zhang2017An}, so we omit the details. Next, to prove the second inequality \eqref{Def_sub_eq2}, for the given parameter $r$, we have
$$||(\vw^{k+1},\vz^{k+1},\vx^{k+1})-(\vw^*,\vz^*,\vx^{*})||\leq r,\ \forall\, k\geq0,$$
which follows from the fact that $(\vw^{k+1},\vz^{k+1},\vx^{k+1})\rightarrow (\vw^*,\vz^*,\vx^*)$. Therefore, from Proposition \ref{proposition1}, we have
$$||(\vw^{k+1},\vz^{k+1})-(\vw^*,\vz^*)||+\text{dist}(\vx^{k+1},\Omega_{P})\leq \kappa_{l}(r) \text{dist}\big{(}\mZero,\cT_{l}(\vw^{k+1},\vz^{k+1},\vx^{k+1})\big{)},\ \forall\, k\geq0,$$
which, together with the estimate $(4.21)$ in \cite{Rockafellar1976Augmented}, implies
\begin{eqnarray}
||(\vw^{k+1},\vz^{k+1})-(\vw^*,\vz^*)||\leq \kappa_{l}(r) \left[\text{dist}^2(\mZero,\partial \Psi^{k}(\vw^{k+1},\vz^{k+1}))+\sigma_{k}^{-2}||\vx^{k+1}-\vx^{k}||^2\right]^{1/2}.
\label{ineq:dist 0 to Tl}
\end{eqnarray}
Then, according to the update rule of $(\vw^{k+1},\vz^{k+1})$ in \eqref{Def bar_w and bar_v} and the Danskin-type theorem \cite{Danskin1966}, one has
\begin{eqnarray}
\text{dist}(\mZero,\partial \Psi^{k}(\vw^{k+1},\vz^{k+1}))=||\nabla \psi^{k}(\vw^{k+1})||.
\label{eq:dist 0 to partial psi}
\end{eqnarray}
In combination of \eqref{ineq:dist 0 to Tl}, \eqref{eq:dist 0 to partial psi} and the stopping criterion $(B')$, we obtain that for all $k\geq 0$,
$$||(\vw^{k+1},\vz^{k+1})-(\vw^*,\vz^*)||\leq \mu'_{k}||\vx^{k+1}-\vx^{k}||,$$
where $\mu'_{k}:=\ \kappa_{l}(r)\sqrt{1/\sigma^{2}_{k}+\tilde{\lambda}_{\min}(Q)\delta^{2}_{k}/\sigma_{k}}$. This completes the proof of the theorem.
\end{proof}

\subsection{The SsN method for the inner subproblem \eqref{eq inner problem1}}
\label{subsec:SsN}

In this subsection, we propose the SsN method to solve the inner subproblem \eqref{eq inner problem1}. Additionally, we need to carefully study the structure of the projection operator $\Pi_{\text{\tiny $K\bigcap L$}}$ and its associated generalized Jacobian, which plays a fundamental role in the design of the SsN method.

\subsubsection{The computation of the projection operator $\Pi_{\text{\tiny $K\bigcap L$}}$}
\label{subsec:projection operator}

In this part, we focus on how to compute the projection operator $\Pi_{\text{\tiny $K\bigcap L$}}(\vv)$ efficiently, where $\vv \in \mathbb{R}^{n}$ is a given vector. Firstly, it follows from the definition of $K$ and $L$ in \eqref{Def L and K} that $\Pi_{\text{\tiny $K\bigcap L$}}(\vv)$ is the solution of the following optimization problem
\begin{equation}\label{Def proj_KL001}
\begin{split}
\min_{\vx\in \mathbb{R}^n}\hspace*{2mm} &\frac{1}{2}||\vx-\vv||^{2}\\
\text{s.t.}\hspace*{2mm} &\va^{T}\vx=d,\\
& \vl \leq \vx \leq \vu.
\end{split}
\end{equation}
Furthermore, by introducing a slack variable $y$,  the problem \eqref{Def proj_KL001} can be reformulated as
\begin{equation}\label{Def proj_KL}
\begin{split}
\min_{\vx\in \mathbb{R}^n, \vy\in \mathbb{R}^n}\hspace*{2mm} &\frac{1}{2}||\vx-\vv||^{2}+\delta_{K}(\vy)\\
\text{s.t.}\hspace*{2mm} &\va^{T}\vx=d,\\
&\vx-\vy=0,\\
\end{split}
\end{equation}
where $\delta_{K}$ is an indicator function for the polyhedral convex set $K$.
Correspondingly, the dual problem of \eqref{Def proj_KL} is
\begin{equation}\label{Def Dual proj_KL}
\begin{split}
\min_{\lambda\in \mathbb{R}, \vz\in \mathbb{R}^n}\hspace*{2mm} &\frac{1}{2}||\vv-\lambda\va -\vz||^{2}+\delta^{*}_{K}(\vz)-\frac{1}{2}||\vv||^{2}+\lambda d,\\
\end{split}
\end{equation}
and the associated KKT condition is
\begin{equation}\label{Def KKT for proj_KL}
\begin{split}
\vx&=\Pi_{K}(\vv-\lambda\va),\ \ \vz=\vv-\lambda\va-\vx,\\
\va^{T}\vx&=d,\ \ \vx=\vy,\ \ (\vx,\vy,\lambda,\vz) \in \mathbb{R}^n \times K \times\mathbb{R} \times\mathbb{R}^n.
\end{split}
\end{equation}
Hence, combining \eqref{Def proj_KL}, \eqref{Def Dual proj_KL} and \eqref{Def KKT for proj_KL}, it is sufficient for us to obtain $\Pi_{\text{\tiny $K\bigcap L$}}(\vv)$ by solving the problem \eqref{Def Dual proj_KL}. Let

$$\varphi(\lambda):=\inf_{\vz}\big{\{}\frac{1}{2}||\vv-\lambda\va-\vz||^{2}+\delta^{*}_{K}(\vz)+\lambda d \big{\}},$$
which implies that the optimal solution $(\hat{\lambda},\hat{\vz})$ of the problem \eqref{Def Dual proj_KL} is given by
\begin{equation}\label{Def Obtain bar_lambda}
\begin{split}
\hat{\lambda}&=\arg\min\{\varphi(\lambda)|\lambda \in \mathbb{R} \},\\
\end{split}
\end{equation}
and
$$\hat{\vz}=\vv-\hat{\lambda}\va-\Pi_{K}(\vv-\va\hat{\lambda}),$$
respectively. Since the function $\varphi$ is convex and continuously differentiable with its gradient given by
$$\nabla\varphi(\lambda)=-\va^{T}\Pi_{K}(\vv-\lambda\va)+d,$$
the optimal solution $\hat{\lambda}$ of \eqref{Def Obtain bar_lambda} is the zero point of $\nabla\varphi$, i.e.,
\begin{equation}\label{Def nonsmooth piecewise affine y}
\begin{split}
\nabla\varphi(\hat{\lambda})=0.
\end{split}
\end{equation}
It follows by \cite{Helgason1980A} that $\nabla\varphi(\lambda)$ is a continuous non-decreasing piecewise linear function which has break points in the following set
\begin{equation*}\label{Def break points}
T=\left\{\frac{v_i-u_{i}}{a_i}, \frac{v_i-l_{i}}{a_i}\ |\ i=1,\ldots,n\right\}.
\end{equation*}
Moreover, the range of $\nabla\varphi(\lambda)$ is a closed and bounded interval
$$\bigg{[}d+\sum_{k\in I_{a_{-}}}{l_{k}|a_{k}|}-\sum_{j\in I_{a_{+}}}{u_{j}|a_{j}|},d+\sum_{k\in I_{a_{-}}}{u_{k}|a_{k}|}-\sum_{j\in I_{a_{+}}}{l_{j}|a_{j}|}\bigg{]},$$
where $I_{a_{+}}=\{i\,|\, a_i>0, i=1,\ldots,n\}$ and $I_{a_{-}}=\{i\,|\, a_i<0, i=1,\ldots,n\}$.

Next, we introduce the algorithm in \cite{Helgason1980A} to obtain the zero point of the equation \eqref{Def nonsmooth piecewise affine y}. The procedure consists of a binary search among the $2n$ break points until bracketing $\hat{\lambda}$ between two breakpoints. The algorithm can be summarized as follows

\begin{algorithm}[H]
\caption{: The breakpoint search algorithm for solving \eqref{Def nonsmooth piecewise affine y} \cite{Helgason1980A}}\label{alg:breakpoint}
\begin{algorithmic}
\State \textbf{Initialization: } Sort all break points given in \eqref{Def break points} with an ascending order
$$t^{(1)} \leq t^{(2)} \leq \ldots\leq t^{(2n)},\ \left(t^{(i)}\in T,\ \forall i=1,\cdots,2n\right)$$
Given $I_l=1$, $I_{u}=2n$, $f_l=\nabla\varphi(t^{(I_l)})$, and $f_u=\nabla\varphi(t^{(I_u)})$.\\
\textbf{While } $I_{u}-I_{l} >1$ and $t^{(I_u)}>t^{(I_l)}$\\
\ \ \ \ \ \ $I_m=[\frac{I_l+I_u}{2}]$, $f_m=\nabla\varphi(t^{(I_m)})$.\\
 \ \ \ \ \ \ \textbf{if} $f_m \geq 0$\\
 \ \ \ \ \ \ \ \ \ \  $I_{u}=I_m$, $f_u=f_m$.\\
 \ \ \ \ \ \ \textbf{else} \\
  \ \ \ \ \ \ \ \ \ \ $I_{l}=I_m$, $f_l=f_m$.\\
 \ \ \ \ \ \ \textbf{end}\\
 \textbf{end}\\
 \textbf{if} $f_u=f_l=0$\\
 \ \ \ \ $\hat{\lambda}=t^{(I_l)}$.\\
 \textbf{else} \\
 \ \ \ \ $\hat{\lambda}=t^{(I_l)}-\frac{f_l}{f_u-f_l}(t^{(I_u)}-t^{(I_l)})$.\\
\textbf{end}\\
\end{algorithmic}
\end{algorithm}

\begin{remark}
Compared with the bisection method used in \cite{ito2017unified}, the breakpoint search algorithm in our paper can avoid the complicated operations on some index sets. Hence, it is often faster than the bisection method in the practical implementation.
\end{remark}
Finally, suppose that the optimal solution $\hat{\lambda}$ of \eqref{Def Obtain bar_lambda} is obtained by Algorithm \ref{alg:breakpoint}. Then, in combination with the KKT condition \eqref{Def KKT for proj_KL}, we have
\begin{equation*}
\begin{split}
  \Pi_{\text{\tiny $K\bigcap L$ }}(\vv)&=\Pi_{\text{\tiny $K$ }}(\vv-\hat{\lambda}\va)\\
  &=\left(\Pi_{[l_{1},u_{1}]}(v_{1}-\hat{\lambda}a_{1}),
\ldots,\Pi_{[l_{n},u_{n}]}(v_{n}-\hat{\lambda}a_{n})\right)^{T},
\end{split}
\end{equation*}
where
\begin{equation*}
\Pi_{[l_{i},u_{i}]}(v_{i}-\hat{\lambda}a_{i})=
\begin{cases}
u_{i} & \  \text{if}\ v_{i}-\hat{\lambda}a_{i} \geq u_{i},  \\
v_{i}-\hat{\lambda}a_{i} & \  \text{if}\ l_{i}< v_{i}-\hat{\lambda}a_{i} < u_{i},  \\
l_{i} & \ \text{if}\ v_{i}-\hat{\lambda}a_{i} \leq l_{i},
\end{cases}
\ \ (i=1,\ldots,n).
\end{equation*}

\subsubsection{The computation of the HS-Jacobian of $\Pi_{\text{\tiny $K\bigcap L$ }}$}
\label{subsec:HS-Jacobian}

In the following, we proceed to find the HS-Jacobian of $\Pi_{\text{\tiny $K\bigcap L$}}$ at a given point $\vv \in \mathbb{R}^{n}$.
For the sake of clarity, we rewrite the box constraint in \eqref{Def proj_KL001} as a general linear inequality constraint so that the problem \eqref{Def proj_KL001} becomes

\begin{equation}\label{Def proj_KL HSj}
\begin{split}
\min_{\vx\in \mathbb{R}^n}\hspace*{2mm} &\frac{1}{2}||\vx-\vv||^{2}\\
\text{s.t.}\hspace*{2mm} &\va^{T}\vx=d,\\
&A\vx\geq \vg,
\end{split}
\end{equation}
where $A=[I_n^{T},-I_n^{T}]^{T} \in \mathbb{R}^{2n \times n}$ and $\vg=[\vl^{T},-\vu^{T}]^{T} \in \mathbb{R}^{2n}$. Denote
\begin{equation}\label{Def_active_set}
\mathcal{I}(\vv)=\{i|\ A_{i}\Pi_{\text{\tiny $K\bigcap L$ }}(\vv)=g_{i},i=1,\ldots,2n\},
\end{equation}
and $r=|\mathcal{I}(\vv)|$, the cardinality of $\mathcal{I}(\vv)$, where $A_{i}$ is the $i$th row of the matrix $A$.
Then, with the notation introduced above, we may compute the HS-Jacobian by the following results.
 \begin{theorem}\label{theorem3_1}
For any $\vv \in \mathbb{R}^{n}$, let $\mathcal{I}(\vv)$ be given in \eqref{Def_active_set} and
\begin{equation}\label{Def_Sigma}
  \Sigma:=I_n-\textrm{Diag}(\vtheta) \in \mathbb{R}^{n \times n},
\end{equation}
where $\vtheta \in \mathcal{R}^{n}$ is defined as
\begin{equation*}
\theta_{i}=
\begin{cases}
1 & \  i \in \mathcal{I}(\vv),  \\
0 & \ \text{otherwise}.
\end{cases}
\ \ i=1,\ldots,n.
\end{equation*}
Then
\begin{equation}\label{Def_HS_P}
P=
\begin{cases}
\Sigma(I_n-\frac{1}{\va^{T}\Sigma \va}\va \va^{T}) \Sigma, & \text{if}\  \va^{T}\Sigma \va \neq 0,  \\
\Sigma, & \text{otherwise},
\end{cases}
\end{equation}
is the HS-Jacobian of $\Pi_{\text{\tiny $K\bigcap L$ }}$ at $\vv$.
\end{theorem}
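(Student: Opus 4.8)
The plan is to compute the stated matrix directly from the definition of the HS-Jacobian of \cite{Han1997Newton}, namely the orthogonal projector onto the null space of a maximal linearly independent subsystem of the constraints that are active at $\hat{\vx}:=\Pi_{\text{\tiny $K\bigcap L$}}(\vv)$. First I would identify those active constraints for problem \eqref{Def proj_KL HSj}. The equality $\va^{T}\vx=d$ is always active, so its normal $\va$ is always present. The active box constraints are exactly those indexing the fixed coordinates $\{j\mid\theta_{j}=1\}$, and their normals are the corresponding rows of $I_{n}$ up to a sign (the sign recording whether the lower or the upper bound is attained). Collecting these coordinate rows into a matrix $E$ (one row per $j$ with $\theta_{j}=1$), I would record the two elementary identities $EE^{T}=I$ and $E^{T}E=\textrm{Diag}(\vtheta)=I_{n}-\Sigma$; the latter shows that the signs are immaterial and that $\Sigma$ is precisely the orthogonal projector onto the free-coordinate subspace $\{\vx\mid x_{j}=0\ \text{whenever}\ \theta_{j}=1\}$.

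Next I would form the full active-constraint matrix $M:=[\va^{T};\,E]$ and use that the HS-Jacobian is the orthogonal projection onto $\mathrm{Null}(M)$. The crux is whether $\va$ is linearly independent of the rows of $E$, and this is governed by the Schur complement of the block $EE^{T}=I$ in $MM^{T}$. A direct computation gives that Schur complement as $\va^{T}\va-\va^{T}E^{T}E\va=\va^{T}(I_{n}-\textrm{Diag}(\vtheta))\va=\va^{T}\Sigma\va$, which is exactly the quantity dividing the two branches of \eqref{Def_HS_P}: it is nonzero precisely when $\Sigma\va\neq0$, i.e. when $\va$ has a nonzero component on some free coordinate.

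In the first case $\va^{T}\Sigma\va\neq0$, the matrix $M$ has full row rank, so $MM^{T}$ is invertible and $P=I_{n}-M^{T}(MM^{T})^{-1}M$. I would invert $MM^{T}$ by the block/Schur-complement formula with scalar Schur complement $s=\va^{T}\Sigma\va$, substitute $E^{T}E\va=(I_{n}-\Sigma)\va$ throughout, and simplify; the cross terms collapse and leave $M^{T}(MM^{T})^{-1}M=(I_{n}-\Sigma)+\frac{1}{\va^{T}\Sigma\va}\Sigma\va\va^{T}\Sigma$, whence $P=\Sigma-\frac{1}{\va^{T}\Sigma\va}\Sigma\va\va^{T}\Sigma=\Sigma\big(I_{n}-\frac{1}{\va^{T}\Sigma\va}\va\va^{T}\big)\Sigma$, matching the first branch. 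In the second case $\va^{T}\Sigma\va=0$, I would observe that $\Sigma\va=0$ forces $\va=(I_{n}-\Sigma)\va=E^{T}E\va\in\mathrm{Ran}(E^{T})$, so $\va^{T}$ lies in the row space of $E$ and the equality constraint is redundant; a maximal linearly independent subsystem of active normals is then given by the rows of $E$ alone, and the projector reduces to $P=I_{n}-E^{T}(EE^{T})^{-1}E=I_{n}-E^{T}E=\Sigma$, the second branch. Finally I would check $P^{T}=P$ and $P^{2}=P$ in both cases (using $\Sigma^{2}=\Sigma$ and $\Sigma\Sigma\va=\Sigma\va$) to confirm $P$ is the required orthogonal projector.

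The main obstacle I anticipate is matching the abstract HS-Jacobian construction of \cite{Han1997Newton} to this concrete polyhedron and getting the degenerate case right: one must recognize that $\va^{T}\Sigma\va$ is exactly the linear-independence/invertibility threshold (the Schur complement of the active box block), and that when it vanishes the equality normal $\va$ is absorbed by the active box normals and so must be dropped from the basis. The accompanying block inversion is routine, but it has to be carried out carefully to see the cross terms cancel into the compact factored form $\Sigma\big(I_{n}-\frac{1}{\va^{T}\Sigma\va}\va\va^{T}\big)\Sigma$.
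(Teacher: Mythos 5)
Your proposal is correct, and in the nondegenerate case it is essentially the paper's own computation: the paper likewise starts from the projector formula $P_{0}=I_n-M^{T}(MM^{T})^{\dagger}M$ with $M$ the full active system (citing Theorem 1 of \cite{Li2017Birkhoff}), notes that the Gram determinant equals $\va^{T}\Sigma\va$, and when this is nonzero inverts $MM^{T}$ by exactly the block/Schur-complement formula you describe, collapsing the cross terms via $A_{\mathcal{I}(\vv)}^{T}A_{\mathcal{I}(\vv)}=I_n-\Sigma$ and $\Sigma^{2}=\Sigma$. Where you genuinely depart from the paper is the degenerate case $\va^{T}\Sigma\va=0$: the paper keeps the redundant row $\va^{T}$ in the active system and explicitly evaluates the Moore--Penrose pseudo-inverse of the now-singular Gram matrix (the formula with $(1+\va^{T}\va)^{2}$ denominators in \eqref{Def P0 case3}), then multiplies out using $A_{\mathcal{I}(\vv)}^{T}A_{\mathcal{I}(\vv)}\va=\va$; you instead observe that $\Sigma\va=\vzero$ makes $\va$ a combination of the active box normals, discard it as redundant, and read off the projector $I_n-E^{T}(EE^{T})^{-1}E=I_n-E^{T}E=\Sigma$ directly (here $E$ is your matrix of active coordinate rows, $A_{\mathcal{I}(\vv)}$ in the paper's notation). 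Your route is cleaner and avoids the pseudo-inverse computation entirely; its only cost is the need to justify that the HS-Jacobian may be computed from a maximal linearly independent subsystem of the active constraints, which is legitimate because $I_n-M^{T}(MM^{T})^{\dagger}M$ is precisely the orthogonal projector onto $\mathrm{Null}(M)$, and the null space --- hence the projector --- is unchanged when linearly dependent rows are deleted. That same observation reconciles your starting characterization with the pseudo-inverse formula the paper cites, so the two proofs compute the identical matrix in both branches.
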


\begin{proof}
Firstly, it follows from Theorem $1$ in \cite{Li2017Birkhoff} that the following matrix
\begin{equation}\label{Def P0}
\begin{split}
P_{0}=I_n-\left[
         \begin{array}{cc}
            A^{T}_{\mathcal{I}(\vv)} & \va \\
         \end{array}
       \right]
       \bigg{(}
       \left[
         \begin{array}{c}
           A_{\mathcal{I}(\vv)} \\
           \va^{T} \\
         \end{array}
       \right]
       \left[
         \begin{array}{cc}
            A^{T}_{\mathcal{I}(\vv)} & \va \\
         \end{array}
       \right]
       \bigg{)}^{\dagger}
       \left[
         \begin{array}{c}
           A_{\mathcal{I}(\vv)} \\
           \va^{T} \\
         \end{array}
       \right]
\\
\end{split}
\end{equation}
is the HS-Jacobian of $\Pi_{\text{\tiny $K\bigcap L$}}$ at $\vv$, where $A_{\mathcal{I}(\vv)}$  is the matrix consisting of the rows of $A$ indexed by $\mathcal{I}(\vv)$. Moreover, the definitions of $\mathcal{I}(\vv)$ and $A_{\mathcal{I}(\vv)}$ imply that
$A_{\mathcal{I}(\vv)}A_{\mathcal{I}(\vv)}^{T}=I_r\ \text{and}\ A_{\mathcal{I}(\vv)}^{T}A_{\mathcal{I}(\vv)}=I_{n}-\Sigma.$

Secondly, we focus on the calculation of the Moore-Penrose pseudo-inverse in \eqref{Def P0}. Above all, it is easy to verify that
\begin{equation*}
\begin{split}
\text{det}\bigg{(}
       \left[
         \begin{array}{c}
           A_{\mathcal{I}(\vv)} \\
           \va^{T} \\
         \end{array}
       \right]
       \left[
         \begin{array}{cc}
            A^{T}_{\mathcal{I}(\vv)} & \va \\
         \end{array}
       \right]
       \bigg{)}=\va^{T}\Sigma \va.
\\
\end{split}
\end{equation*}
Then, on one hand, if $\va^{T}\Sigma \va \neq 0$, we have that
\begin{equation*}\label{Def P0 case2}
\begin{split}
P_{0}=&I_n-\left[
         \begin{array}{cc}
            A^{T}_{\mathcal{I}(\vv)} & \va \\
         \end{array}
       \right]
       \bigg{(}
       \left[
         \begin{array}{c}
           A_{\mathcal{I}(\vv)} \\
           \va^{T} \\
         \end{array}
       \right]
       \left[
         \begin{array}{cc}
            A^{T}_{\mathcal{I}(\vv)} & \va \\
         \end{array}
       \right]
       \bigg{)}^{-1}
       \left[
         \begin{array}{c}
           A_{\mathcal{I}(\vv)} \\
           \va^{T} \\
         \end{array}
       \right],\\
       =&I_n-\left[
         \begin{array}{cc}
            A^{T}_{\mathcal{I}(\vv)} & \va \\
         \end{array}
       \right]
       \left(
         \begin{array}{cc}
           I_{r}+\frac{1}{\va^{T}\Sigma\va}A_{\mathcal{I}(\vv)}\va\va^{T}A_{\mathcal{I}(\vv)}^{T} & -\frac{1}{\va^{T}\Sigma\va}A_{\mathcal{I}(\vv)}\va \\
           -\frac{1}{\va^{T}\Sigma\va}\va^{T}A_{\mathcal{I}(\vv)}^{T} & \frac{1}{\va^{T}\Sigma\va} \\
         \end{array}
       \right)
       \left[
         \begin{array}{c}
           A_{\mathcal{I}(\vv)} \\
           \va^{T} \\
         \end{array}
       \right],\\
       =&\Sigma(I_n-\frac{1}{\va^{T}\Sigma
       \va}\va \va^{T}) \Sigma,
\end{split}
\end{equation*}
where the last equality holds because $\Sigma=I_n-A_{\mathcal{I}(\vv)}^{T}A_{\mathcal{I}(\vv)}$ and $\Sigma=\Sigma^{2}$.

On the other hand, if $\va^{T}\Sigma \va = (\Sigma \va)^{T}\Sigma \va=0$, i.e., $\Sigma \va = \vzero$, then
\begin{equation}\label{Def P0 case3}
\begin{split}
P_{0}=&I_n-\left[
         \begin{array}{cc}
            A^{T}_{\mathcal{I}(\vv)} & \va \\
         \end{array}
       \right]
       \left(
         \begin{array}{cc}
           I_{r}-\frac{\va^{T}\va+2}{(1+\va^{T}\va)^2}A_{\mathcal{I}(\vv)}\va\va^{T}A_{\mathcal{I}(\vv)}^{T} & \frac{1}{(1+\va^{T}\va)^2}A_{\mathcal{I}(\vv)}\va \\
           \frac{1}{(1+\va^{T}\va)^2}\va^{T}A_{\mathcal{I}(\vv)}^{T} & \frac{\va^{T}\va}{(1+\va^{T}\va)^2} \\
         \end{array}
       \right)
       \left[
         \begin{array}{c}
           A_{\mathcal{I}(\vv)} \\
           \va^{T} \\
         \end{array}
       \right]\\
       =&\Sigma,
\end{split}
\end{equation}
where the first equality follows from the definition of the Moore-Penrose pseudo-inverse and \eqref{Def P0}, and the last equality is owing to $A_{\mathcal{I}(\vv)}^{T}A_{\mathcal{I}(\vv)}\va=\va$. Thus, the
proof is completed.
\end{proof}

\subsubsection{The SsN method for the inner subproblem \eqref{eq inner problem1}}

In this part, we formally present the SsN method for the subproblem \eqref{eq inner problem1}. Recall that we need to solve the convex subproblem in each iteration of Algorithm \ref{Def Algorithm ALM}, i.e.,
$$\min_{\vw \in \text{Ran}(Q)}\left\{\psi^{k}(\vw):=\frac{1}{2}\langle \vw,Q\vw\rangle+\frac{1}{2\sigma_{k}}\left(||\vu(\vw)||^2-||\vu(\vw)-\Pi_{\text{\tiny $K\bigcap L$ }}(\vu(\vw))||^2\right)-\frac{1}{2\sigma_{k}}||\vx^{k}||^{2}\right\}.$$
Furthermore, note that
\begin{equation}\label{Def_differential_of_psi(w)}
\nabla\psi^{k}(\vw)=Q\vw-Q\Pi_{\text{\tiny $K\bigcap L$ }}(\vu(\vw)),
\end{equation}
which implies that the optimal solution $\bar{\vw}$ can be obtained through solving the following nonsmooth piecewise affine equation
\begin{equation}\label{Def_nonsmooth_equation}
\nabla\psi^{k}(\vw)=0,\ \ \ \vw \in \text{Ran}(Q).
\end{equation}

Let $\vw \in \text{Ran}(Q)$ be any given point. We define the following operator
$$\hat{\partial}^{2}\psi^{k}(\vw):=Q+\sigma_{k} Q\mathscr{P}(\vu(\vw))Q,$$
where the multivalued mapping $\mathscr{P}:\mathbb{R}^{n}\rightrightarrows \mathbb{R}^{n \times n}$ is the HS-Jacobian of $\Pi_{\text{\tiny $K\bigcap L$}}$ \cite{Han1997Newton}.

Now we are ready to state the SsN method as below.
\begin{algorithm}[H]
\caption{: the SsN method for the problem \eqref{eq inner problem1}} \label{alg:SsN}
\begin{algorithmic}
\State Given $\mu \in (0,\frac{1}{2}), \delta \in (0,1), \tau \in (0,1]$ and $\eta\in (0,1)$. Given an initial point $\vw^0 \in \mathbb{R}^{n}$. For $j=0,1,2,\ldots,$ iterate the following steps:
\State \textbf{Step 1. } Let $\mathcal{M}_{j}:=Q+\sigma Q\mathcal{P}_{j}Q$, where $\mathcal{P}_{j} \in \mathscr{P}(\vu(\vw^{j}))$. Apply the conjugate gradient (CG) method to find an approximate solution $d_{w^{j}}$ to the following linear system
\begin{equation}\label{Newton_Linear_System}
\mathcal{M}_{j}\vd_{\vw^{j}}+\nabla\psi^{k}(\vw^{j})=0, \ \ \ \ \vd_{\vw^{j}} \in \text{Ran}(Q)
\end{equation}
such that
$$||\mathcal{M}_{j}\vd_{\vw^{j}}+\nabla\psi^{k}(\vw^{j})||\leq \text{min}(\eta,||\nabla\psi^{k}(\vw^{j})||^{1+\tau}).$$
\State \textbf{Step 2. } (Line search) Set $\alpha_{j}=\delta^{m_j}$, where $m_j$ is the first nonneigative integer $m$ for which
$$\psi^{k}(\vw^{j}+\delta^{m}\vd_{\vw^{j}})\leq\psi^{k}(\vw^{j})+\mu\delta^{m}\langle \nabla\psi^{k}(\vw^{j}), \vd_{\vw^{j}} \rangle.$$
\State \textbf{Step 3. }Set $\vw^{j+1}=\vw^{j}+\alpha_{j}\vd_{\vw^{j}}$.
\end{algorithmic}
\end{algorithm}

\begin{remark}
  \ \\
 (i) Since the matrix $\mathcal{M}_{j}$ is positive definite in $\text{Ran}(Q)$ and $\nabla\psi^{k}(\vw^{j})\in \text{Ran}(Q)$, the linear system \eqref{Newton_Linear_System} has a unique solution in $\text{Ran}(Q)$.\\
 (ii) For the SVM problems with nonlinear kernels, we usually adopt some easy-to-implement algorithm to warm start the SSsNAL method. This process is very essential because if we choose an arbitrary initial point, when we implement Algorithm \ref{alg:SsN} we can hardly get the sparse structure in the early Newton iterations and the cost of the Newton step may be very high. We will present some necessary details in Section \ref{section_4}.
\end{remark}

Before analyzing the local convergence rate of the SsN method, we first consider the strong semismoothness of $\nabla\psi^{k}$. For the definitions of semismoothness and $\gamma$-order semismoothness, one may see the below.

\begin{definition}
(semismoothness) \cite{Kummer1988Newton,Qi1993A} Let $\mathcal{O} \subseteq \mathbb{R}^{n}$ be an open set, $\mathcal{K}:\mathcal{O}\subseteq \mathbb{R}^{n}\rightrightarrows \mathbb{R}^{n \times m}$ be a nonempty and compact valued, upper-semicontinuous set-valued mapping, and $F:\mathcal{O}\rightarrow \mathbb{R}^{n}$ be a locally Lipschitz continuous function. $F$ is said to be semismooth at $x \in O$ with respect to the multifunction $\mathcal{K}$ if $F$ is directionally differentiable at $x$ and for any $V \in \mathcal{K}(x+\Delta x)$ with $\Delta x\rightarrow 0$,
\begin{equation*}
  F(x+\Delta x)-F(x)-V\Delta x=o(||\Delta x||).
\end{equation*}
Let $\gamma$ be a positive constant. $F$ is said to be $\gamma$-order (strongly, if $\gamma=1$) semismooth at $X$ with respect to $\mathcal{K}$ if $F$ is directionally differentiable at $x$ and for any $V \in \mathcal{K}(x+\Delta x)$ with $\Delta x\rightarrow 0$,
\begin{equation*}
  F(x+\Delta x)-F(x)-V\Delta x=O(||\Delta x||^{1+\gamma}).
\end{equation*}
\end{definition}

According to \cite[Lemma 2.1]{Han1997Newton} and \cite[ Theorem 7.5.17]{Facchinei2003Finite}, we can immediately obtain that $\nabla\psi^{k}$ is strongly semismooth at $\vw$ with respect to $\hat{\partial}^{2}\psi^{k}$.
\begin{theorem}\label{theorem_SsN_convergence}
Let $\{\vw^j\}$ be the infinite sequence generated by Algorithm \ref{alg:SsN}. Then $\{\vw^j\}$ converges to the unique optimal solution $\tilde{\vw} \in \text{Ran}(Q)$ to the problem \eqref{Def_nonsmooth_equation} and
$$||\vw^{j+1}-\tilde{\vw}||=O(||\vw^{j}-\tilde{\vw}||^{1+\tau}).$$
\end{theorem}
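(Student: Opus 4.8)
The plan is to run the standard globalized inexact semismooth Newton analysis, leaning on two structural facts: the strong convexity of $\psi^{k}$ on $\text{Ran}(Q)$ with modulus $\tilde{\lambda}_{\min}(Q)>0$, and the uniform positive definiteness of the coefficient matrices $\mathcal{M}_{j}=Q+\sigma Q\mathcal{P}_{j}Q$ on $\text{Ran}(Q)$. For the second fact I would first observe that every HS-Jacobian element $\mathcal{P}_{j}$, given by the matrix $P$ in Theorem \ref{theorem3_1}, is a symmetric orthogonal projection (in the first case $P=\Sigma-\|\Sigma\va\|^{-2}(\Sigma\va)(\Sigma\va)^{T}$, in the second $P=\Sigma$), hence positive semidefinite with $\|\mathcal{P}_{j}\|\le 1$. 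Consequently, for every $\vd\in\text{Ran}(Q)$ one has $\langle\vd,\mathcal{M}_{j}\vd\rangle\ge\langle\vd,Q\vd\rangle\ge\tilde{\lambda}_{\min}(Q)\|\vd\|^{2}$ and $\|\mathcal{M}_{j}\|\le\|Q\|+\sigma\|Q\|^{2}$, so $\mathcal{M}_{j}$ maps $\text{Ran}(Q)$ into itself and is invertible there with $\|\mathcal{M}_{j}^{-1}\|$ bounded by $1/\tilde{\lambda}_{\min}(Q)$ uniformly in $j$; in particular \eqref{Newton_Linear_System} is consistent in $\text{Ran}(Q)$.

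First I would prove well-definedness and global convergence. From the residual bound $\|\mathcal{M}_{j}\vd_{\vw^{j}}+\nabla\psi^{k}(\vw^{j})\|\le\|\nabla\psi^{k}(\vw^{j})\|^{1+\tau}$ and the uniform positive definiteness above, it follows that $\vd_{\vw^{j}}$ is a genuine descent direction whenever $\nabla\psi^{k}(\vw^{j})\neq\vzero$, so the Armijo rule in Step 2 terminates after finitely many backtrackings. The resulting monotone decrease of $\{\psi^{k}(\vw^{j})\}$, combined with the coercivity of $\psi^{k}$ on $\text{Ran}(Q)$ implied by strong convexity, keeps $\{\vw^{j}\}$ bounded; a standard line-search argument then forces $\nabla\psi^{k}(\vw^{j})\to\vzero$, and by continuity every accumulation point is a stationary point. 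Strong convexity makes that stationary point unique, so the whole sequence converges to the unique minimizer $\tilde{\vw}$, which solves \eqref{Def_nonsmooth_equation}.

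Next I would establish the local rate. The key preliminary is that the full step $\alpha_{j}=1$ is eventually accepted: using the second-order expansion $\psi^{k}(\vw^{j}+\vd_{\vw^{j}})-\psi^{k}(\vw^{j})=\langle\nabla\psi^{k}(\vw^{j}),\vd_{\vw^{j}}\rangle+\tfrac12\langle\vd_{\vw^{j}},\mathcal{M}_{j}\vd_{\vw^{j}}\rangle+o(\|\vd_{\vw^{j}}\|^{2})$ obtained from strong semismoothness of $\nabla\psi^{k}$ with respect to $\hat{\partial}^{2}\psi^{k}$, together with the near-Newton identity $\langle\nabla\psi^{k}(\vw^{j}),\vd_{\vw^{j}}\rangle\approx-\langle\vd_{\vw^{j}},\mathcal{M}_{j}\vd_{\vw^{j}}\rangle$ (exact up to the small CG residual), the Armijo test with $\mu<\tfrac12$ holds at $\alpha_{j}=1$ for all $j$ large. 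Writing $r_{j}:=\mathcal{M}_{j}\vd_{\vw^{j}}+\nabla\psi^{k}(\vw^{j})\in\text{Ran}(Q)$ and using $\nabla\psi^{k}(\tilde{\vw})=\vzero$, the unit step gives
$$\vw^{j+1}-\tilde{\vw}=\mathcal{M}_{j}^{-1}\big[\mathcal{M}_{j}(\vw^{j}-\tilde{\vw})-\big(\nabla\psi^{k}(\vw^{j})-\nabla\psi^{k}(\tilde{\vw})\big)\big]+\mathcal{M}_{j}^{-1}r_{j}.$$
The bracketed term is $O(\|\vw^{j}-\tilde{\vw}\|^{2})$ by strong ($\gamma=1$) semismoothness of $\nabla\psi^{k}$ and $\mathcal{M}_{j}\in\hat{\partial}^{2}\psi^{k}(\vw^{j})$, while $\|\mathcal{M}_{j}^{-1}r_{j}\|\le\tilde{\lambda}_{\min}(Q)^{-1}\|\nabla\psi^{k}(\vw^{j})\|^{1+\tau}=O(\|\vw^{j}-\tilde{\vw}\|^{1+\tau})$ since $\nabla\psi^{k}$ is Lipschitz and vanishes at $\tilde{\vw}$. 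As $\tau\in(0,1]$, the residual term dominates and yields $\|\vw^{j+1}-\tilde{\vw}\|=O(\|\vw^{j}-\tilde{\vw}\|^{1+\tau})$.

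The main obstacle I anticipate is the transition from the global to the local phase, i.e. proving that the inexact Newton direction with the prescribed CG tolerance admits the unit step under the Armijo rule with $\mu<\tfrac12$. This requires controlling the CG residual $\|r_{j}\|\le\|\nabla\psi^{k}(\vw^{j})\|^{1+\tau}$ simultaneously against the semismoothness remainder so that the near-Newton identity is tight enough to beat the factor $\mu<\tfrac12$. Everything else, namely descent, boundedness via strong convexity, and the final rate estimate, is routine once the uniform positive definiteness of $\mathcal{M}_{j}$ on $\text{Ran}(Q)$ has been secured.
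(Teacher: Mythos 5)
The paper offers no proof of this theorem at all: it states the result immediately after establishing that $\nabla\psi^{k}$ is strongly semismooth with respect to $\hat{\partial}^{2}\psi^{k}$, implicitly delegating everything else to the standard theory of globalized inexact semismooth Newton methods. Your proposal supplies exactly that standard argument, and most of it is correct: the HS-Jacobian elements of Theorem \ref{theorem3_1} are indeed symmetric orthogonal projections, so each $\mathcal{M}_{j}=Q+\sigma Q\mathcal{P}_{j}Q$ is symmetric, maps $\text{Ran}(Q)$ into itself, and satisfies $\langle\vd,\mathcal{M}_{j}\vd\rangle\geq\tilde{\lambda}_{\min}(Q)\|\vd\|^{2}$ there; the error recursion $\vw^{j+1}-\tilde{\vw}=\mathcal{M}_{j}^{-1}\big[\mathcal{M}_{j}(\vw^{j}-\tilde{\vw})-\big(\nabla\psi^{k}(\vw^{j})-\nabla\psi^{k}(\tilde{\vw})\big)\big]+\mathcal{M}_{j}^{-1}r_{j}$, combined with strong semismoothness at $\tilde{\vw}$, the uniform bound $\|\mathcal{M}_{j}^{-1}\|\leq\tilde{\lambda}_{\min}(Q)^{-1}$ on $\text{Ran}(Q)$, and the Lipschitz continuity of $\nabla\psi^{k}$, correctly yields the $O(\|\vw^{j}-\tilde{\vw}\|^{1+\tau})$ rate once unit steps are accepted; and the eventual acceptance of $\alpha_{j}=1$ under $\mu<\tfrac{1}{2}$ is the standard SC$^{1}$ argument you sketch.

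There is, however, one step that fails as stated: the claim that the residual bound plus uniform positive definiteness already make $\vd_{\vw^{j}}$ a descent direction. Writing $g_{j}=\nabla\psi^{k}(\vw^{j})$, one only obtains $\langle g_{j},\vd_{\vw^{j}}\rangle\leq-\|g_{j}\|^{2}/\lambda_{\max}(\mathcal{M}_{j})+\|g_{j}\|\,\|r_{j}\|/\tilde{\lambda}_{\min}(Q)$, so descent is guaranteed only when $\|r_{j}\|<\rho\|g_{j}\|$ with $\rho=\tilde{\lambda}_{\min}(Q)/\lambda_{\max}(\mathcal{M}_{j})$. The tolerance in Algorithm \ref{alg:SsN} is $\|r_{j}\|\leq\min(\eta,\|g_{j}\|^{1+\tau})$, which enforces this only when $\|g_{j}\|$ is small (namely $\|g_{j}\|^{\tau}<\rho$) or large (namely $\|g_{j}\|>\eta/\rho$); whenever $Q$ is ill conditioned enough that $\rho^{1+1/\tau}\leq\eta$, there is a nonempty intermediate range of gradient norms in which these bounds do not exclude $\langle g_{j},\vd_{\vw^{j}}\rangle\geq 0$, and then the Armijo backtracking in Step 2 need not terminate, so even the existence of the infinite sequence is not established by your argument. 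The standard repair, and what the SSNAL-type literature this paper builds on actually uses, is to exploit that the direction is produced by CG started from the zero vector on a symmetric positive definite system: every CG iterate $\vd$ then satisfies $\langle g_{j},\vd\rangle<-\tfrac{1}{2}\langle\vd,\mathcal{M}_{j}\vd\rangle<0$ automatically, independently of the residual size. With that substitution (or with an added relative residual requirement such as $\|r_{j}\|\leq\bar{\eta}\,\rho\,\|g_{j}\|$ for some $\bar{\eta}<1$), the remainder of your proof goes through and establishes the theorem.
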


\subsubsection{An efficient implementation for solving the linear system \eqref{Newton_Linear_System}}
\label{subsubsec:linear system}
 In this part, we discuss how to solve the linear system of equations \eqref{Newton_Linear_System} efficiently. In Algorithm \ref{alg:SsN}, the most time-consuming step is solving the linear system \eqref{Newton_Linear_System}, so it is essential to make full use of the the sparse structure of the coefficient matrix to design an efficient algorithm to obtain  the descent direction $\vd_{\vw^{j}}$. For the sake of convenience, we ignore the subscript and rewrite the linear system \eqref{Newton_Linear_System} as follows
\begin{equation}\label{Newton system 1}
 \left(Q+\sigma Q\mathcal{P}Q\right)\vd=-\nabla\psi^{k}(\vw), \ \ \ \ \vd \in \text{Ran}(Q),
\end{equation}
where $\mathcal{P}\in \mathscr{P}(\vu(\vw))$ and $\vu(\vw)=\vx^{k}-\sigma_{k}(Q\vw+\vc)$. Although $Q$ may be singular, the subspace constraint and the fact that $\nabla\psi^{k}(\vw)\in \text{Ran}(Q)$ together imply that there exists a unique solution to the linear system \eqref{Newton system 1}. It is extraordinarily time-consuming to solve \eqref{Newton system 1} directly when the dimension of $Q$ is large. However, we only need to update $Q\vd$ and $\vd^{T}Q\vd$ in every iteration of Algorithm \ref{alg:SsN}. Hence, as shown in the next proposition, instead of computing the solution $\hat{\vd}$ of \eqref{Newton system 1} directly, we may choose to solve a relatively much smaller linear system to obtain $Q\hat{\vd}$ and $\hat{\vd}^{T}Q\hat{\vd}$ by deliberately employing the sparse structure of the HS-Jacobian.
\begin{proposition}\label{proposition5}
Let the index set $\mathcal{J}=\{1,\cdots,n\}/\mathcal{I}(\vu(\vw))$, where $\mathcal{I}(\vu(\vw))$ is defined by \eqref{Def_active_set} and $p=|\mathcal{J}|$ denotes the cardinality of $\mathcal{J}$. Moreover, Let $\Sigma \in \mathbb{R}^{n \times n}$ be a diagonal matrix whose $i$-th diagonal element $\Sigma_{ii}$ is equal to $1$ if $i \in \mathcal{J}$, otherwise $\Sigma_{ii}$ is equal to $0$. Assume that $\hat{\vd}\in \text{Ran}(Q)$ is the solution to the linear system \eqref{Newton system 1}.

$(a)$ If $\va^{T}\Sigma \va\neq 0$, then we have
\begin{equation}\label{Def_Qd_start1}
\begin{split}
  Q\hat{\vd}=&Q^{T}_{\mathcal{J}}\vv_{\mathcal{J}}(\vw)-\nabla\psi^{k}(\vw)+\sigma\frac{ \va^{T}_{\mathcal{J}}Q_{\mathcal{J}\mathcal{J}}\vv_{\mathcal{J}}(\vw)-
  \va^{T}_{\mathcal{J}}\nabla\psi^{k}_{\mathcal{J}}(\vw)}{\va_{\mathcal{J}}^{T} \va_{\mathcal{J}}-\sigma\va_{\mathcal{J}}^{T} Q_{\mathcal{J}\mathcal{J}}\va_{\mathcal{J}}}Q^{T}_{\mathcal{J}} \va_{\mathcal{J}}
  \end{split}
\end{equation}
and
\begin{equation}\label{Def_dQd}
\begin{split}
  \hat{\vd}^{T}Q\hat{\vd}=&\vv^{T}_{\mathcal{J}}(\vw)Q_{\mathcal{J}\mathcal{J}}\vv_{\mathcal{J}}(\vw)
  -2\vv^{T}_{\mathcal{J}}(\vw)\nabla\psi^{k}_{\mathcal{J}}(\vw)+(\vs(\vw))^{T}Q\vs(\vw)\\
  &+\frac{2\sigma\va_{\mathcal{J}}^{T} \va_{\mathcal{J}}-\sigma^{2}\va_{\mathcal{J}}^{T} Q_{\mathcal{J}\mathcal{J}}\va_{\mathcal{J}}}{\left(\va_{\mathcal{J}}^{T} \va_{\mathcal{J}}-\sigma\va_{\mathcal{J}}^{T} Q_{\mathcal{J}\mathcal{J}}\va_{\mathcal{J}}\right)^{2}}
  (\va^{T}_{\mathcal{J}}Q_{\mathcal{J}\mathcal{J}}\vv_{\mathcal{J}}(\vw)-
  \va^{T}_{\mathcal{J}}\nabla\psi^{k}_{\mathcal{J}}(\vw))^{2},
 \end{split}
\end{equation}
where $\vs(\vw)=\vw-\Pi_{\text{\tiny $K\bigcap L$ }}(\vu(\vw))$ and $\vv_{\mathcal{J}}(\vw) \in \mathbb{R}^{p}$ is the solution to the following linear system
\begin{equation}\label{Def_small_linear_eq}
\begin{split}
  \left(\frac{1}{\sigma}I_{p}+Q_{\mathcal{J}\mathcal{J}}+\frac{\sigma Q_{\mathcal{J}\mathcal{J}} \va_{\mathcal{J}} \va_{\mathcal{J}}^{T}Q_{\mathcal{J}\mathcal{J}} }{\va_{\mathcal{J}}^{T} \va_{\mathcal{J}}-\sigma\va_{\mathcal{J}}^{T} Q_{\mathcal{J}\mathcal{J}}\va_{\mathcal{J}}}\right)\vv_{\mathcal{J}}(\vw)=\left(I_{p}+\frac{\sigma Q_{\mathcal{J}\mathcal{J}} \va_{\mathcal{J}} \va_{\mathcal{J}}^{T} }{\va_{\mathcal{J}}^{T} \va_{\mathcal{J}}-\sigma\va_{\mathcal{J}}^{T} Q_{\mathcal{J}\mathcal{J}}\va_{\mathcal{J}}}\right)\nabla\psi^{k}_{\mathcal{J}}(\vw),\\
  \end{split}
\end{equation}
and $Q_{\mathcal{J}\mathcal{J}}\in \mathbb{R}^{p \times p}$ is the submatrix of $Q$ with those rows and columns in $\mathcal{J}$. Moreover, $\nabla\psi^{k}_{\mathcal{J}}(\vw) \in \mathbb{R}^{p}$, $\va_{\mathcal{J}} \in \mathbb{R}^{p}$ and $Q_{\mathcal{J}}\in \mathbb{R}^{p \times n}$ are matrices consisting of the rows of $\nabla\psi^{k}(\vw)$, $\va$ and $Q$ indexed by $\mathcal{J}$, respectively.

$(b)$ If $\va^{T}\Sigma \va = 0$, then we have
\begin{equation}\label{Def_Qd_start1_s}
\begin{split}
  Q\hat{\vd}=&Q^{T}_{\mathcal{J}}\vv_{\mathcal{J}}(\vw)-\nabla\psi^{k}(\vw)
  \end{split}
\end{equation}
and
\begin{equation}\label{Def_dQd_s}
\begin{split}
  \hat{\vd}^{T}Q\hat{\vd}=&\vv^{T}_{\mathcal{J}}(\vw)Q_{\mathcal{J}\mathcal{J}}\vv_{\mathcal{J}}(\vw)
  -2\vv^{T}_{\mathcal{J}}(\vw)\nabla\psi^{k}_{\mathcal{J}}(\vw)+(\vs(\vw))^{T}Q\vs(\vw),\\
 \end{split}
\end{equation}
where $Q_{\mathcal{J}\mathcal{J}}$, $Q_{\mathcal{J}}$, $\nabla\psi^{k}_{\mathcal{J}}(\vw)$ and $\va_{\mathcal{J}}$ are same as those in $(a)$ and $\vv_{\mathcal{J}}(\vw) \in \mathbb{R}^{p}$ is the solution to the following linear system
\begin{equation}\label{Def_small_linear_eq_s}
\begin{split}
  \left(\frac{1}{\sigma}I_{p}+Q_{\mathcal{J}\mathcal{J}}\right)\vv_{\mathcal{J}}(\vw)=\nabla\psi^{k}_{\mathcal{J}}(\vw).\\
  \end{split}
\end{equation}

\end{proposition}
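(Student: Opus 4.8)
The plan is to exploit the explicit form of the HS-Jacobian from Theorem \ref{theorem3_1} together with the observation that the CG solve and the subsequent line search never need $\hat{\vd}$ itself, only $Q\hat{\vd}$ and $\hat{\vd}^{T}Q\hat{\vd}$. First I would substitute $\mathcal{P}=P$ from \eqref{Def_HS_P} into \eqref{Newton system 1} and, using $\Sigma=\Sigma^{2}=\Sigma^{T}$, rewrite the Jacobian in case $(a)$ as $\mathcal{P}=\Sigma-\frac{1}{\va^{T}\Sigma\va}(\Sigma\va)(\Sigma\va)^{T}$ and in case $(b)$ as $\mathcal{P}=\Sigma$. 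The key structural fact is that $\Sigma$ is the coordinate projector onto $\mathcal{J}$: the vector $\Sigma\va$ is the zero-padding of $\va_{\mathcal{J}}$, the product $\mathcal{P}Q\hat{\vd}$ is supported on $\mathcal{J}$, and $Q$ applied to any $\mathcal{J}$-supported vector with restriction $\vu_{\mathcal{J}}\in\mathbb{R}^{p}$ equals $Q_{\mathcal{J}}^{T}\vu_{\mathcal{J}}$. This is exactly what lets me collapse the $n\times n$ system to a $p\times p$ one.

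Then I would rearrange \eqref{Newton system 1} as $Q\hat{\vd}=-\nabla\psi^{k}(\vw)-\sigma Q(\mathcal{P}Q\hat{\vd})$ and, since $\mathcal{P}Q\hat{\vd}$ is $\mathcal{J}$-supported, write $Q\hat{\vd}=Q_{\mathcal{J}}^{T}\vv_{\mathcal{J}}(\vw)-\nabla\psi^{k}(\vw)$ together with (in case $(a)$) a scalar multiple of $Q_{\mathcal{J}}^{T}\va_{\mathcal{J}}$, which is precisely the shape of \eqref{Def_Qd_start1}. Restricting this identity to $\mathcal{J}$ and feeding it back into the definition of $\mathcal{P}Q\hat{\vd}$ produces a $p$-dimensional relation that is self-referential through the single scalar $\va_{\mathcal{J}}^{T}(Q\hat{\vd})_{\mathcal{J}}$. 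Eliminating that scalar, equivalently a Sherman--Morrison step on the rank-one term $\va_{\mathcal{J}}\va_{\mathcal{J}}^{T}$, is what yields the symmetric reduced system \eqref{Def_small_linear_eq} for $\vv_{\mathcal{J}}(\vw)$, with the denominator $\va_{\mathcal{J}}^{T}\va_{\mathcal{J}}-\sigma\va_{\mathcal{J}}^{T}Q_{\mathcal{J}\mathcal{J}}\va_{\mathcal{J}}$ coming out of the consistency equation. Back-substituting $\vv_{\mathcal{J}}(\vw)$ into the expression for $Q\hat{\vd}$ gives \eqref{Def_Qd_start1}.

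For the quadratic quantity $\hat{\vd}^{T}Q\hat{\vd}$ I would use the identity $\nabla\psi^{k}(\vw)=Q\vs(\vw)$ with $\vs(\vw)=\vw-\Pi_{K\cap L}(\vu(\vw))$, read off directly from \eqref{Def_differential_of_psi(w)}. This converts $\hat{\vd}^{T}\nabla\psi^{k}(\vw)=(Q\hat{\vd})^{T}\vs(\vw)$ into quantities already available; substituting $Q\hat{\vd}=Q_{\mathcal{J}}^{T}\vv_{\mathcal{J}}(\vw)-\nabla\psi^{k}(\vw)$, restricting the relevant products to $\mathcal{J}$, and (in case $(a)$) collecting the $\va_{\mathcal{J}}$-contributions with the coefficient found above yields the quadratic-plus-correction form \eqref{Def_dQd}. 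Case $(b)$ is then just the degeneration $\va^{T}\Sigma\va=0$, i.e. $\Sigma\va=\vzero$: the rank-one term vanishes, $\mathcal{P}=\Sigma$, and the same computation collapses to \eqref{Def_small_linear_eq_s}, \eqref{Def_Qd_start1_s} and \eqref{Def_dQd_s} with no correction.

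The part I expect to be delicate is the scalar elimination in the second step: tracking the contraction $\va_{\mathcal{J}}^{T}(Q\hat{\vd})_{\mathcal{J}}$ through the Sherman--Morrison bookkeeping so that the stated denominator emerges, and confirming that the resulting reduced system is well posed. This I would trace back to the positive definiteness of $\mathcal{M}_{j}=Q+\sigma Q\mathcal{P}Q$ on $\text{Ran}(Q)$, which itself follows from $\mathcal{P}\succeq 0$ (a short Cauchy--Schwarz computation on $\Sigma-\frac{1}{\va^{T}\Sigma\va}(\Sigma\va)(\Sigma\va)^{T}$) and $Q\succ 0$ there; since the $n$-dimensional Newton system then has a unique solution, the reduced $p$-dimensional system determines $Q\hat{\vd}$ and $\hat{\vd}^{T}Q\hat{\vd}$ unambiguously. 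Everything else is routine block algebra using $\Sigma^{2}=\Sigma$.
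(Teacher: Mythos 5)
Your proposal is correct, but it takes a genuinely different route from the paper's proof, and both lead to the stated formulas. The paper factorizes $Q=LL^{T}$ with $L\in\mathbb{R}^{n\times r}$ of full column rank ($r=\mathrm{Rank}(Q)$), rewrites \eqref{Newton system 1} as $\bigl(I_{r}+\sigma L^{T}\mathcal{P}L\bigr)L^{T}\vd=-L^{T}\vs(\vw)$, inverts this $r\times r$ operator by two applications of the Sherman--Morrison--Woodbury formula, and reads off $Q\hat{\vd}=L(L^{T}\hat{\vd})$ and $\hat{\vd}^{T}Q\hat{\vd}=\|L^{T}\hat{\vd}\|^{2}$, with $\vv_{\mathcal{J}}(\vw)$ and the denominator $D:=\va_{\mathcal{J}}^{T}\va_{\mathcal{J}}-\sigma\va_{\mathcal{J}}^{T}Q_{\mathcal{J}\mathcal{J}}\va_{\mathcal{J}}$ emerging mechanically from the SMW algebra via $Q_{\mathcal{J}\mathcal{J}}=L_{\mathcal{J}}L_{\mathcal{J}}^{T}$ and $\nabla\psi^{k}_{\mathcal{J}}(\vw)=L_{\mathcal{J}}L^{T}\vs(\vw)$. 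You never factorize $Q$: you treat $\vh:=Q\hat{\vd}$ as the unknown of $\vh=-\nabla\psi^{k}(\vw)-\sigma Q\mathcal{P}\vh$, use the $\mathcal{J}$-support of $\mathcal{P}\vh$ to collapse this onto the coordinates in $\mathcal{J}$, and eliminate the coupling scalar $\va_{\mathcal{J}}^{T}\vh_{\mathcal{J}}$. Your delicate step does work out, and the bridge to the paper's notation is $\vv_{\mathcal{J}}(\vw)=-\sigma\vh_{\mathcal{J}}$: under this substitution your restricted equation becomes $\bigl(\tfrac{1}{\sigma}I_{p}+Q_{\mathcal{J}\mathcal{J}}-\tfrac{1}{\va_{\mathcal{J}}^{T}\va_{\mathcal{J}}}Q_{\mathcal{J}\mathcal{J}}\va_{\mathcal{J}}\va_{\mathcal{J}}^{T}\bigr)\vv_{\mathcal{J}}=\nabla\psi^{k}_{\mathcal{J}}(\vw)$, and pairing it with $\va_{\mathcal{J}}$ gives $\va_{\mathcal{J}}^{T}\vv_{\mathcal{J}}/\va_{\mathcal{J}}^{T}\va_{\mathcal{J}}=-\sigma\bigl(\va_{\mathcal{J}}^{T}Q_{\mathcal{J}\mathcal{J}}\vv_{\mathcal{J}}-\va_{\mathcal{J}}^{T}\nabla\psi^{k}_{\mathcal{J}}(\vw)\bigr)/D$, which converts it into the symmetric system \eqref{Def_small_linear_eq} and, back-substituted, yields \eqref{Def_Qd_start1}. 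For the quadratic form, the clean way to complete your sketch is the energy identity $\hat{\vd}^{T}Q\hat{\vd}=-\vh^{T}\vs(\vw)-\sigma\vh^{T}\mathcal{P}\vh$, obtained by pairing the Newton equation with $\hat{\vd}$ and using $\nabla\psi^{k}(\vw)=Q\vs(\vw)$; combined with the scalar relation above it reproduces the correction coefficient $(2\sigma\va_{\mathcal{J}}^{T}\va_{\mathcal{J}}-\sigma^{2}\va_{\mathcal{J}}^{T}Q_{\mathcal{J}\mathcal{J}}\va_{\mathcal{J}})/D^{2}$ in \eqref{Def_dQd} exactly, and case $(b)$ degenerates as you say. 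As for what each route buys: yours avoids introducing $L$ (which is never formed in practice and forces the rank bookkeeping), makes the sparsity mechanism transparent, and has the bonus that your intermediate nonsymmetric reduced system is unconditionally nonsingular (the nonzero eigenvalues of $Q_{\mathcal{J}\mathcal{J}}(I_{p}-\va_{\mathcal{J}}\va_{\mathcal{J}}^{T}/\va_{\mathcal{J}}^{T}\va_{\mathcal{J}})$ coincide with those of a positive semidefinite matrix), so well-posedness of \eqref{Def_small_linear_eq} follows from the equivalence rather than being assumed; the paper's factorization, by contrast, makes the inversions purely mechanical and delivers $\hat{\vd}^{T}Q\hat{\vd}$ for free as a squared norm. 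Note that both arguments share the standing assumption $D\neq 0$, which the paper addresses in the remark following the proposition.
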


\begin{proof}
To prove part $(a)$, combining with Theorem \ref{theorem3_1} and the condition $\va^{T}\Sigma \va\neq 0$, we know that $\mathcal{P}=\Sigma(I_n-\frac{1}{\va^{T}\Sigma \va}\va \va^{T}) \Sigma$ is an element in the HS-Jacobian of $\Pi_{\text{\tiny $K\bigcap L$ }}$ at $\vu(\vw)$. From \eqref{Def_differential_of_psi(w)}, it is not difficult to establish
Then according to \eqref{Def_differential_of_psi(w)}, we have $\nabla\psi^{k}(\vw)=Q\vs(\vw)$. Thus, without loss of generality, we assume that $Q$ can be decomposed as $Q=LL^{T}$ where $L \in \mathbb{R}^{n \times r}$ is a full collum rank matrix and $r=\text{Rank}(Q)$. Then, substituting $Q=LL^{T}$ into \eqref{Newton system 1}, we obtain that
\begin{equation}\label{Newton system 2}
 L\left(I_{r}+\sigma L^{T}\mathcal{P}L\right)L^{T}\vd=-LL^{T}\vs(\vw),\ \ \ \ \vd \in \text{Ran}(Q).
\end{equation}
Since $L$ has a full collum rank and $\mathcal{P}=\Sigma(I_n-\frac{1}{\va^{T}\Sigma \va}\va \va^{T}) \Sigma$, \eqref{Newton system 2} is further equivalent to
\begin{equation}\label{Newton system 3}
 \left(I_{r}+\sigma L^{T}\Sigma(I_n-\frac{1}{\va^{T}\Sigma \va}\va \va^{T}) \Sigma L\right)L^{T}\vd=-L^{T}\vs(\vw),\ \ \ \ \vd \in \text{Ran}(Q).
\end{equation}
Since $\hat{\vd}\in \text{Ran}(Q)$ is the solution to the linear system \eqref{Newton system 1}, we have
\begin{equation}\label{Def_Ld}
\begin{split}
 L^{T}\hat{\vd}=&-\left(I_{r}+\sigma L^{T}\Sigma(I_n-\frac{1}{\va^{T}\Sigma \va}\va \va^{T}) \Sigma L\right)^{-1}L^{T}\vs(\vw)\\
  =&-\left(I_{r}-\frac{\sigma }{\va_{\mathcal{J}}^{T} \va_{\mathcal{J}}}L_{\mathcal{J}}^{T} \va_{\mathcal{J}} \va_{\mathcal{J}}^{T}L_{\mathcal{J}}+\sigma L_{\mathcal{J}}^{T}L_{\mathcal{J}}\right)^{-1}L^{T}\vs(\vw)\\
  =&-\left(I_{r}+\frac{\sigma L_{\mathcal{J}}^{T} \va_{\mathcal{J}} \va_{\mathcal{J}}^{T}L_{\mathcal{J}} }{\va_{\mathcal{J}}^{T} \va_{\mathcal{J}}-\sigma\va_{\mathcal{J}}^{T} L_{\mathcal{J}}L_{\mathcal{J}}^{T}\va_{\mathcal{J}}}\right)L^{T}\vs(\vw)+\left(I_{r}+\frac{\sigma L_{\mathcal{J}}^{T} \va_{\mathcal{J}} \va_{\mathcal{J}}^{T}L_{\mathcal{J}} }{\va_{\mathcal{J}}^{T} \va_{\mathcal{J}}-\sigma\va_{\mathcal{J}}^{T} L_{\mathcal{J}}L_{\mathcal{J}}^{T}\va_{\mathcal{J}}}\right)L_{\mathcal{J}}^{T}\\
  &\left(\frac{1}{\sigma}I_{p}+L_{\mathcal{J}}L^{T}_{\mathcal{J}}+\frac{\sigma L_{\mathcal{J}}L_{\mathcal{J}}^{T} \va_{\mathcal{J}} \va_{\mathcal{J}}^{T}L_{\mathcal{J}}L^{T}_{\mathcal{J}} }{\va_{\mathcal{J}}^{T} \va_{\mathcal{J}}-\sigma\va_{\mathcal{J}}^{T} L_{\mathcal{J}}L_{\mathcal{J}}^{T}\va_{\mathcal{J}}}\right)^{-1}L_{\mathcal{J}}\left(I_{r}+\frac{\sigma L_{\mathcal{J}}^{T} \va_{\mathcal{J}} \va_{\mathcal{J}}^{T}L_{\mathcal{J}} }{\va_{\mathcal{J}}^{T} \va_{\mathcal{J}}-\sigma\va_{\mathcal{J}}^{T} L_{\mathcal{J}}L_{\mathcal{J}}^{T}\va_{\mathcal{J}}}\right)L^{T}\vs(\vw)\\
   =&-L^{T}\vs(\vw)-\frac{\sigma L_{\mathcal{J}}^{T} \va_{\mathcal{J}} \va_{\mathcal{J}}^{T}\nabla\psi^{k}_{\mathcal{J}}(\vw)}{\va_{\mathcal{J}}^{T} \va_{\mathcal{J}}-\sigma\va_{\mathcal{J}}^{T} Q_{\mathcal{J}\mathcal{J}}\va_{\mathcal{J}}}+L_{\mathcal{J}}^{T}\left(I_{p}+\frac{\sigma \va_{\mathcal{J}} \va_{\mathcal{J}}^{T}Q_{\mathcal{J}\mathcal{J}} }{\va_{\mathcal{J}}^{T} \va_{\mathcal{J}}-\sigma\va_{\mathcal{J}}^{T} Q_{\mathcal{J}\mathcal{J}}\va_{\mathcal{J}}}\right)\\
  &\left(\frac{1}{\sigma}I_{p}+Q_{\mathcal{J}\mathcal{J}}+\frac{\sigma Q_{\mathcal{J}\mathcal{J}} \va_{\mathcal{J}} \va_{\mathcal{J}}^{T}Q_{\mathcal{J}\mathcal{J}} }{\va_{\mathcal{J}}^{T} \va_{\mathcal{J}}-\sigma\va_{\mathcal{J}}^{T} Q_{\mathcal{J}\mathcal{J}}\va_{\mathcal{J}}}\right)^{-1}\left(I_{p}+\frac{\sigma Q_{\mathcal{J}\mathcal{J}} \va_{\mathcal{J}} \va_{\mathcal{J}}^{T} }{\va_{\mathcal{J}}^{T} \va_{\mathcal{J}}-\sigma\va_{\mathcal{J}}^{T}Q_{\mathcal{J}\mathcal{J}}\va_{\mathcal{J}}}\right)\nabla\psi^{k}_{\mathcal{J}}(\vw)\\
   =&-L^{T}\vs(\vw)-\frac{\sigma L_{\mathcal{J}}^{T} \va_{\mathcal{J}} \va_{\mathcal{J}}^{T}\nabla\psi^{k}_{\mathcal{J}}(\vw)}{\va_{\mathcal{J}}^{T} \va_{\mathcal{J}}-\sigma\va_{\mathcal{J}}^{T} Q_{\mathcal{J}\mathcal{J}}\va_{\mathcal{J}}}+L_{\mathcal{J}}^{T}\left(I_{p}+\frac{\sigma \va_{\mathcal{J}} \va_{\mathcal{J}}^{T}Q_{\mathcal{J}\mathcal{J}} }{\va_{\mathcal{J}}^{T} \va_{\mathcal{J}}-\sigma\va_{\mathcal{J}}^{T} Q_{\mathcal{J}\mathcal{J}}\va_{\mathcal{J}}}\right)\vv_{\mathcal{J}}(\vw)\\
 \end{split}
\end{equation}
where the first equality is thanks to \eqref{Newton system 2} and the nonsingularity of $\left(I_{r}+\sigma L^{T}\Sigma(I_n-\frac{1}{\va^{T}\Sigma \va}\va \va^{T}) \Sigma L\right)$; the second equality follows from the special $0$-$1$ structure of the diagonal matrix $\Sigma$ and the fact that $L^{T}\Sigma \Sigma L=L_{\mathcal{J}}^{T}L_{\mathcal{J}}$, $\va^{T}\Sigma \va=\va_{\mathcal{J}}^{T}\va_{\mathcal{J}}$ and $\va^{T}\Sigma L=\va_{\mathcal{J}}^{T}L_{\mathcal{J}}$, where $L_{\mathcal{J}}\in \mathbb{R}^{p \times r}$ is a matrix consisting of the rows of $L$ indexed by $\mathcal{J}$; the third equality is obtained by using the Sherman-Morrison-Woodbury formula \cite{Golub1996Matrix} twice; the fourth equality is due to $Q_{\mathcal{J}\mathcal{J}}=L_{\mathcal{J}}L_{\mathcal{J}}^{T}$ and $\nabla\psi^{k}_{\mathcal{J}}(\vw)=L_{\mathcal{J}}L^{T}\vs(\vw)$. Finally, it is immediately follows from $Q\hat{\vd}=LL^{T}\hat{\vd}$, $\hat{\vd}^{T}Q\hat{\vd}=(L^{T}\hat{\vd})^{T}L^{T}\hat{\vd}$ and \eqref{Def_Ld} that \eqref{Def_Qd_start1} and \eqref{Def_dQd} hold true, thus concluding the proof of part $(a)$.

As for part $(b)$, since $\va^{T}\Sigma\va=0$, it follows from Theorem \ref{theorem3_1} that $\mathcal{P}=\Sigma \in \mathscr{P}(\vu(\vw))$.
Similar to the proof for part $(a)$, we can obtain the desired results \eqref{Def_Qd_start1_s}-\eqref{Def_small_linear_eq_s}.
\end{proof}

\begin{remark}
In the proof of Proposition \ref{proposition5}, we always assume that $\va_{\mathcal{J}}^{T} \va_{\mathcal{J}}-\sigma\va_{\mathcal{J}}^{T} Q_{\mathcal{J}\mathcal{J}}\va_{\mathcal{J}}\neq 0$, i.e., the matrix $I_{r}-\frac{\sigma }{\va_{\mathcal{J}}^{T} \va_{\mathcal{J}}}L_{\mathcal{J}}^{T} \va_{\mathcal{J}} \va_{\mathcal{J}}^{T}L_{\mathcal{J}}$ in \eqref{Def_Ld} is invertible. Actually this assumption is reasonable because $\va_{\mathcal{J}}^{T} \va_{\mathcal{J}}=\va^{T}\Sigma\va\neq 0$ and $\sigma$ can be adjusted in an appropriate way in the implementation of the algorithm to avoid $\va_{\mathcal{J}}^{T} \va_{\mathcal{J}}-\sigma\va_{\mathcal{J}}^{T} Q_{\mathcal{J}\mathcal{J}}\va_{\mathcal{J}}= 0$.
\end{remark}

From the above discussion, we can see that the computational costs for solving the Newton linear system \eqref{Newton_Linear_System} are reduced significantly from $O(n^3)$ to $O(p^3)$. According to the updating rule for the primal variable $\vx^{k}$ in \eqref{Def_new_update_xkplus},
the number $p$ is also equal to the number of the unbounded support vectors \cite{abe2005support} in the SVMs, which is usually much smaller than the number of samples $n$ \cite{Keerthi2006Building}. So we can always solve the linear system \eqref{Def_small_linear_eq} or \eqref{Def_small_linear_eq_s} at very low costs.

\section{Numerical experiments}\label{section_4}

In this section, we demonstrate the performances of the SSsNAL method by solving the SVM problem \eqref{Problem_P1_classical1} on the benchmark datasets from the LIBSVM data \cite{Chang2011LIBSVM}. For a comparison, three state-of-the-art solvers for solving the SVM problems: the P2GP method \cite{Serafino2018A}, the LIBSVM \cite{Chang2011LIBSVM} and the FAPG method \cite{ito2017unified} are used to solve the same problems. Note that the P2GP method is a two-phase gradient-based method and its {\sc Matlab} code can be downloaded from \textit{https://github.com/diserafi/P2GP}. The LIBSVM implements the sequential minimal optimization method \cite{Platt1999Fast} which is available on \textit{https://www.csie.ntu.edu.tw/\textasciitilde cjlin/libsvm/index.html}, and the FAPG method is a general optimization algorithm based on an accelerated proximal gradient method and the authors shared their code to us. Moreover, we scale the LIBSVM datasets so that each sample $\tilde{\vx}_{i} \in [0,1]^{q}, (i=1,2,\ldots,n)$. The details of the LIBSVM datasets (the size of the problem, the proportion of nonzeros in the data and the type of the task) are presented in Table \ref{table1}.

All experiments are implemented in {\sc Matlab} R2018b on a PC with the Intel Core i7-6700HQ processors (2.60GHz) and 8 GB of physical memory.

\subsection{The stopping criterion}\label{sec_SC}

\label{subsec:parameters setting}
We use the following relative KKT residual as a stopping criterion for all the algorithms which only involves the variable $\vx$ in the primal problem \eqref{Problem_P1_classical1}
 \begin{equation}\label{Def_Rkkt}
\text{R}_{\text{KKT}}(\vx)=\frac{||\vx-\Pi_{\text{\tiny $K\bigcap L$}}(\vx-Q\vx-\vc)||}{1+||\vx||} < \textrm{Tol},
\end{equation}
where $\textrm{Tol}=10^{-3}\footnote{Note that our SSsNAL method is easier and faster to generate a high accurate solution, say, a solution with the KKT residual less than $10^{-6}$. But we found that the accuracy of the classification with the highly accurate solution can only improve very little, so here we only set Tol $= 10^{-3}$.}$. We also set the maximum numbers of iterations for the SSsNAL, FAPG, P2GP and LINBSVM to be $200$, $20000$, $20000$ and $20000$, respectively. Moreover, the initial values of variables $\vx^0$ and $\vw^0$ are set to be zero vectors in our SSsNAL method.

\subsection{Numerical experiments for the support vector classification problems with the linear kernel} \label{subsection_linear_kernel_case}

In this subsection, the performances of all the methods on the C-SVC problem \eqref{Def_Dual_of_C_SVM_Problem} with a linear kernel function $k(\vx,\vy)=\vx^{T}\vy$ are presented. For each LIBSVM data set, we adopt the ten-fold cross validation scheme to select the penalty parameter $C$. Moreover, the linear C-SVC model is trained on the training set and the testing data is used to verify the classification accuracy. As for those datasets which do not have the corresponding testing data, we randomly and uniformly sample $80\%$ of the data from the dataset as a training set, and use the remained $20\%$ as a testing set. In order to eliminate the effects of randomness, we repeat the above process $10$ times and obtain an average result.

\newpage
\begin{table}[h]
\centering
 \begin{tabular}{|c||rrrrrrccc|}
  \hline
        Data   & $n_{\text{train}}$ & ($n_{\text{train}}^{+},$  & $n_{\text{train}}^{-}$)& $n_{\text{test}}$ & ($n_{\text{test}}^{+},$  & $n_{\text{test}}^{-}$)& $q$ & density & type\\
 \hline
  splice
 &1000 & (517, & 483) & 2175  &(1131, & 1044)& 60 &1.000 & classification \\
  madelon
 &2000 & (1000, & 1000) & 600  &(300, & 300)& 500 &0.999& classification\\
  ijcnn1
  &35000 & (3415, & 31585) & 91701  &(8712, & 82989)& 22 &0.591& classification\\
  svmguide1
 &3089 & (1089, & 2000) & 4000  & (2000, & 2000)& 4 &0.997& classification\\
  svmguide3
 &1243 & (947, & 296) & 41  & (41, & 0)& 22 &0.805& classification\\
w1a
 &2477 & (72, & 2405) & 47272  & (1407, & 45865)& 300 &0.039& classification\\
 w2a
 &3470 & (107, & 3363) & 46279  & (1372, & 44907)& 300 &0.039& classification\\
 w3a
 &4912 & (143, & 4769) & 44837  & (1336, & 43501)& 300 &0.039& classification\\
 w4a
 &7366 & (216, & 7150) & 42383  & (1263, & 41120)& 300 &0.039& classification\\
 w5a
 &9888 & (281, & 9607) & 39861  & (1198, & 38663)& 300 &0.039& classification\\
 w6a
 &17188 & (525, & 16663) & 32551  & (954, & 31607)& 300 &0.039& classification\\
 w7a
 &5103 & (740, & 4363) & 25057  & (739, & 24318)& 300 &0.039& classification\\
 australian
 &690 & (307, & 383) & *  & (*, & *)& 14 & 0.874& classification\\
 breast-cancer
 &683 & (239, & 444) & *  & (*, & *)& 10 & 1.000& classification\\
 mushrooms
 &8124 & (3916, & 4208) & * & (*, & *)& 112 & 0.188& classification\\
 phishing
 &11055 & (6157, & 4898) & * & (*, & *)& 68 & 0.441& classification\\
 diabetes
 &768 & (500, & 268) & *  & (*, & *)& 8 & 0.999& classification\\
 ionosphere
 &651 & (225, & 126) & *  & (*, & *)& 34 & 0.884& classification\\
 heart
 &270 & (120, & 150) & *  & (*, & *)& 13 & 0.962& classification\\
 fourclass
 &862 & (307, & 555) & *  & (*, & *)& 2 & 0.983& classification\\
 colon-cancer
 &62 &  (22, & 40) & *  & (*, & *)& 2000 & 1.000& classification\\
 diabetes
 &768 & (500, & 268) & *  & (*, & *)& 8 & 0.999& classification\\
 a1a
 &1605 & (395, & 1210) & *  & (*, & *)& 119 & 0.116& classification\\
 a2a
 &2265 & (572, & 1693) & *  & (*, & *)& 119 & 0.115& classification\\
 a3a
 &3185 & (773, & 2412) & *  & (*, & *)& 122 & 0.114& classification\\
 a4a
 &4781 & (1188, & 3593) & *  & (*, & *)& 122 & 0.114& classification\\
 a6a
 &11220 & (2692, & 8528) & * & (*, & *)& 122 & 0.114& classification\\
  a7a
 &16100 & (3918, & 12182) & *  & (*, & *)& 122 & 0.115& classification\\
  cadata
 &11080 & (*, & *) & *  & (*, & *)& 8 & 1.000& regression\\
 abalone
 &4177 & (*, & *) & * & (*, & *)& 8 & 0.960& regression\\
 space\_ga
 &3107 & (*, & *) & *  & (*, & *)& 6 & 0.750& regression\\
  \hline
\end{tabular}
\caption{The details of the LIBSVM Datasets. `$*$' means that it does not have the corresponding data or the features of the training data and testing data are not equal to each other.}\label{table1}
\end{table}

\newpage

We report the numerical results in Table \ref{table2}. For the numerical results, we report the data set name (Data), the number of samples ($n$) and features ($q$), the value of penalty parameter (C), the number of unbounded support vectors (suppvec), the relative KKT residual ($\text{R}_{\text{KKT}}$), the computing time (Time), the iteration number (Iter) and the percentage of the classification accuracy on the testing data set (Accuracy).  Particularly, we use ``$s\ \text{sign}(t)|t|$" to denote a number of the form ``$s\times 10^{t}$" in all the following tables, e.g., $1.0$-$4$ denotes $1.0\times 10^{-4}$. The computing time here is in the format of ``hours:minutes:seconds''. Notably, ``00'' and ``T'' in the time column denote that the elapsed time is less than 0.5 second and more than 2 hours, respectively. For the SSsNAL method, we report the number of iterations in the form of ``$\text{Iter}_{\text{alm}}$($\text{Iter}_{\text{ssn}}$)'', where $\text{Iter}_{\text{alm}}$ and $\text{Iter}_{\text{ssn}}$ represent the number of outer iterations and the average number of inner iterations, respectively. Note that the P2GP method is an alternating method between two different phases, so we report its number of iterations in the form of
``$\text{Iter}_{\text{0}}$($\text{Iter}_{I}$,
$\text{Iter}_{I\!I}$)'', where $\text{Iter}_{\text{0}}$  represents the alternating number of phases. $\text{Iter}_{I}$ and $\text{Iter}_{I\!I}$ represent the the average number of iterations in phase I and II, respectively. Moreover, the reported unbounded support vectors are obtained from the SSsNAL method.

\begin{table}[htpb]
\begin{adjustbox}{addcode={\begin{minipage}{\width}}{\caption{Comparisons of the SSsNAL, FAPG, P2GP, and LINBSVM solvers for the C-SVC problem \eqref{Def_Dual_of_C_SVM_Problem} with the linear kernel function. In the table, ``a'' denotes the SSsNAL method, ``b'' denotes the FAGP method, ``c'' denotes the P2GP method, and ``d'' denotes the LINBSVM solver.}\label{table2}\end{minipage}},rotate=90,center}
\centering
\small
\begin{tabular}{|ccc|l|l|l|l|}
\hline
Data   & C & suppvec & \multicolumn{1}{c|}{$\text{R}_{\text{KKT}}$} & \multicolumn{1}{c|}{Time}  & \multicolumn{1}{c|}{Iter} &\multicolumn{1}{c|}{Accuracy (\%)}\\
 & & & \multicolumn{1}{c|}{a$|$b$|$c$|$d}  &  \multicolumn{1}{c|}{a$|$b$|$c$|$d}  & \multicolumn{1}{c|}{a$|$b$|$c$|$d} & \multicolumn{1}{c|}{a$|$b$|$c$|$d} \\
\hline
madelon &0.031 & 134 &  4.1-4 $|$ 9.9-4 $|$ 1.8-2 $|$ 1.2-2
&    01 $|$ 01 $|$ 01 $|$ 4:45
&   3(9) $|$ 133 $|$ 3(15,37) $|$ 20000
&   59.0 $|$ 59.0 $|$ 59.0 $|$ 58.6
\\\hline
ijcnn1 &1024 & 16410 &  9.6-4 $|$ 9.9-4 $|$ 1.7-4 $|$ 2.8-2
&    02 $|$ 45 $|$ 1:28 $|$ T
&   5(4) $|$ 3386 $|$ 38(1266,16454) $|$ 20000
&   91.5 $|$ 92.1 $|$ 91.4 $|$ 84.2
\\\hline
cod-rna &2 & 234 &  3.9-4 $|$ 6.9-4 $|$ 1.1-4 $|$ 1.6-0
&    02 $|$ 9 $|$ 75 $|$ T
&   6(2) $|$ 801 $|$ 17(400,8774) $|$ 1362
&   95.2 $|$ 95.2 $|$ 95.2 $|$ 0.0
\\\hline
svmguide1 &64 & 50 &  3.1-4 $|$ 4.7-4 $|$ 2.7-3 $|$ 6.9-4
&    00 $|$ 01 $|$ 07 $|$ 43
&   6(3) $|$ 301 $|$ 636(4352,10850) $|$ 1103
&   94.7 $|$ 94.8 $|$ 94.6 $|$ 94.6
\\\hline
w1a &1 & 126 &  2.6-4 $|$ 9.7-4 $|$ 1.1-3 $|$ 2.7-2
&   01 $|$ 01 $|$ 01 $|$ 6:04
&   6(7) $|$ 461 $|$ 5(30,287) $|$ 20000
&    97.7 $|$ 97.7 $|$ 97.7 $|$ 97.7
\\\hline
w2a &2 & 169 &  1.2-4 $|$ 9.8-4 $|$ 9.0-4 $|$ 1.5-2
&    01 $|$ 01 $|$ 01 $|$ 13:27
&    7(7) $|$ 673 $|$ 4(29,772) $|$ 20000
&   98.0 $|$ 98.0 $|$ 98.0 $|$ 98.0
\\\hline
w3a&2 & 194 &  4.9-4 $|$ 9.7-4 $|$ 7.8-4 $|$ 1.4-3
&    01 $|$ 02 $|$ 02 $|$ 31:57
&    7(7) $|$ 809 $|$ 7(41,2072) $|$ 20000
&    98.2 $|$ 98.2 $|$ 98.2 $|$ 98.2
\\\hline
w4a &1 & 226 &  7.8-4 $|$ 9.9-4 $|$ 1.6-3 $|$ 1.9-3
&    01 $|$ 03 $|$ 02 $|$ 1:18:32
&    5(10) $|$ 906 $|$ 5(29,1841) $|$ 20000
&    98.4 $|$ 98.4 $|$ 98.4 $|$ 98.4
\\\hline
w5a &1 & 229 &  2.1-4 $|$ 9.9-4 $|$ 1.2-3 $|$ 1.8-3
&    01 $|$ 05 $|$ 06 $|$ T
&    7(7) $|$ 1573 $|$ 8(44,3321) $|$ 15315
&    98.5 $|$ 98.5 $|$ 98.5 $|$ 98.5
\\\hline
w6a &1 & 360 &  5.0-4 $|$ 9.9-4 $|$ 1.8-3 $|$ 2.9-2
&    02 $|$ 16 $|$ 05 $|$ T
&    6(11) $|$ 2784 $|$ 4(30,2072) $|$ 4643
&    98.7 $|$ 98.7 $|$ 98.7 $|$ 98.7
\\\hline
w7a &2 & 601 &  8.7-4 $|$ 9.9-4 $|$ 1.0-3 $|$ 1.5-2
&    01 $|$ 02 $|$ 01 $|$ 33:10
&    5(10) $|$ 875 $|$ 4(37,1250) $|$ 20000
&    97.3 $|$ 97.2 $|$ 97.2 $|$ 97.3
\\\hline
w8a &32 & 5145 &   8.4-4 $|$ 9.9-4 $|$ 5.3-5 $|$ 1.5-1
&    05 $|$ 3:19 $|$ 1:03 $|$ T
&    10(8) $|$ 11528 $|$ 15(330,11069) $|$ 692
&    98.7 $|$ 98.7 $|$ 98.7 $|$ 87.7
\\\hline
mushrooms &0.25 & 376 &   6.2-4 $|$ 9.8-4 $|$ 3.2-2 $|$ 1.4-2
&    01 $|$ 2 $|$ 01 $|$ 1:02:25
&    7(7) $|$ 480 $|$ 2(63,149) $|$ 20000
&    100 $|$ 100 $|$ 100 $|$ 100
\\\hline
phishing  &0.063 & 220 &  6.3-4 $|$ 9.8-4 $|$ 2.9-1 $|$ 5.6-3
&    01 $|$ 2 $|$ 40 $|$ T
&    6(6) $|$ 520 $|$ 12(210,19888) $|$ 14644
&    94.1 $|$ 94.1 $|$ 89.6 $|$ 93.8
\\\hline
skin nonskin &8192 &156 &  5.7-4 $|$ 2.0-1 $|$ 3.7-3 $|$ 1.2-2
&    00 $|$ 00 $|$ 00 $|$ 01
&    5(2) $|$ 101 $|$ 1(9,17) $|$ 20000
&    83.1 $|$ 83.3 $|$ 82.1 $|$ 83.3
\\\hline
a2a  &0.25 &107 &  5.2-4 $|$ 9.8-4 $|$ 3.7-3 $|$ 2.5-3
&    00 $|$ 01 $|$ 01 $|$ 4:04
&    6(5) $|$ 407 $|$ 3(35,967) $|$ 20000
&    81.0 $|$ 80.9 $|$ 81.0 $|$ 81.9
\\\hline
a3a  &0.5 &102 &  7.1-4 $|$ 9.9-4 $|$ 2.3-3 $|$ 1.4-3
&    01 $|$ 01 $|$ 01 $|$ 9:29
&    5(5) $|$ 438 $|$ 3(37,819) $|$ 20000
&    84.0 $|$ 84.0 $|$ 84.0 $|$ 83.6
\\\hline
a4a  &256 &191 &  4.5-4 $|$ 9.9-4 $|$ 1.1-4 $|$ 1.5-2
&    01 $|$ 06 $|$ 08 $|$ 24:50
&    6(9) $|$ 4638 $|$ 25(304,19667) $|$ 20000
&    83.9 $|$ 83.9 $|$ 83.5 $|$ 57.7
\\\hline
a6a  &0.03 &178 &  5.9-4 $|$ 9.7-4 $|$ 2.3-2 $|$ 5.2-3
&    01 $|$ 01 $|$ 04 $|$ T
&    4(8) $|$ 456 $|$ 5(120,2722) $|$ 15893
&    84.2 $|$ 84.2 $|$ 84.2 $|$ 84
\\\hline
a7a  &0.5 &102&  5.3-4 $|$ 9.8-4 $|$ 2.5-3 $|$ 4.5-3
&    01 $|$ 04 $|$ 02 $|$ T
&    6(6) $|$ 1142 $|$ 1(42,1086) $|$ 9601
&    84.6 $|$ 84.6 $|$ 84.6 $|$ 84
\\\hline
a8a  &2 &182 &  5.8-4 $|$ 9.8-4 $|$ 6.8-4 $|$ 7.4-1
&    01 $|$ 10 $|$ 07 $|$ T
&    5(8) $|$ 2228 $|$ 3(77,2904) $|$ 5229
&    84.5 $|$ 84.5 $|$ 84.6 $|$ 67.6
\\\hline
a9a  &1 &190 &  4.2-4 $|$ 9.8-4 $|$ 1.3-3 $|$ 1.4-0
&    01 $|$ 15 $|$ 06 $|$ T
&    5(9) $|$ 2318 $|$ 2(59,2056) $|$ 2318
&    84.8 $|$ 84.8 $|$ 84.8 $|$ 67.6
\\\hline
\end{tabular}
\end{adjustbox}
\end{table}

In Table \ref{table2}, we present the performances of the four algorithms on various C-SVC problems \eqref{Def_Dual_of_C_SVM_Problem} with the linear kernel function. Note that our SSsNAL method not only achieves the desired accuracy in all cases, but also outperforms the other three algorithms. For example, in the `w8a' data set, only the SSsNAL and FAPG methods achieve the predetermined accuracy, but the SSsNAL method only takes 5 seconds to reach the desired accuracy while the FAPG method needs more than three minutes and the other two solvers needs much more time.
The test accuracy of the classification on most of problems are nearly the same for different algorithms except for the LINBSVM. For the datasets `jcnn1', `a4a', `a8a' and `a9a', the accuracies of the LINBSVM are worse than those of the other three algorithms, because its KKT residuals obviously fail to achieve the required accuracy. Hence, to achieve a satisfactory classification, an appropriate precision requirement is necessary.

\subsection{Numerical experiments for the Nystr\"{o}m method and the random Fourier features method to the RBF kernel support vector classification problems}

In this subsection, we compare the performances of the SSsNAL, FAPG and P2PG methods to the approximated linear kernel support vector classification problem \eqref{Def_Dual_of_C_SVM_Problem} by the Nystr\"{o}m method and the random Fourier features method.

First, we give a brief introduction to the Nystr\"{o}m method \cite{williams2001using}. Recall that $(\tilde{\vx}_{i},y_{i}),\,i = 1, . . . , n$, denote a set of training examples, where $\tilde{\vx}_{i} \in \mathbb{R}^{p}$ and $y_{i} \in \{+1, -1\}$. Let $K\in \mathbb{R}^{n\times n}$ be the RBF kernel matrix with $K_{ij}=\exp^{-||\tilde{\vx}_{i}-\tilde{\vx}_{j}||^2/2\alpha}\ (i,j=1,\ldots,n$), where $\alpha>0$ is a given parameter. Instead of computing the whole kernel matrix directly, the Nystr\"{o}m method provides a reduced-rank approximation to the kernel matrix $K$ by choosing a landmark points set $\mathcal{I}$ consisting of the column indicators of matrix $K$ and then setting
$$\widetilde{K}=K_{n,r}K_{r,r}^{-1}K_{n,r}^{T},$$
where $K_{n,r}\in \mathbb{R}^{n\times r}$ is a submatrix consisting of the columns of $K$ indexed
by $\mathcal{I}$, $K_{r,r} \in \mathbb{R}^{r\times r}$ is also a submatrix of $K$ with the rows and columns in $\mathcal{I}$ and $r$ is the cardinality of the set $\mathcal{I}$. Next, we can use the reduced-rank approximation matrix $\tilde{K}$ to replace the original kernel matrix $K$ in the problem \eqref{Def_Dual_of_C_SVM_Problem} and turn the problem into a linear kernel support vector classification problem.

Next, we introduce the random Fourier features method \cite{rahimi2008random} which is another commonly used randomized algorithm for approximating the kernel matrix. Let $k(\vx,\vy)=\exp^{-||\vx-\vy||^2/2\alpha}$ be the RBF kernel function. Then, according to the Bochner's theorem \cite{rudin1962fourier}, it can be regarded as the Fourier transform of the Gaussian probability density function $p(\omega)=(2\pi/\alpha^n)^{-\frac{n}{2}}\exp^{-\alpha||\omega||^2/2}$, i.e.,
    $$k(\vx,\vy)=\int_{\mathbb{R}^q}\exp^{i\omega^{T}(\vx-\vy)}p(\omega)
    =E_{\omega}[\vz_{\omega}(\vx)^{T}\vz_{\omega}(\vy)],$$
    where the function $\vz_{\omega}(\vx):=[\cos(\omega^{T}\vx), \sin(\omega^{T}\vx)]^{T}\in \mathbb{R}^{2}$ is a real-valued mapping. So, the random Fourier features method is constructed by first sampling the independent and identically distributed vectors $\omega_{1},\ldots,\omega_{N}\in \mathbb{R}^{q}$ from $p(\omega)$ and then using the sample average $\frac{1}{N}\sum_{i=1}^{N}\vz_{\omega_{i}}(\vx)^{T}\vz_{\omega_{i}}(\vy)$ to estimate $k(\vx,\vy)$. If $K\in \mathbb{R}^{n\times n}$ is the RBF kernel matrix, then the random Fourier features method is to approximate $K$ by
    $$\widetilde{K}=Z_{\omega}Z_{\omega}^{T},$$
    where
$$ Z_{\omega}:=\left(
      \begin{array}{ccccc}
        \cos(\omega^{T}_{1}\tilde{\vx}_{1}) & \sin(\omega^{T}_{1}\tilde{\vx}_{1}) & \cdots & \cos(\omega^{T}_{N}\tilde{\vx}_{1}) & \sin(\omega^{T}_{N}\tilde{\vx}_{1}) \\
        \vdots & \vdots & \ddots& \vdots & \vdots \\
        \cos(\omega^{T}_{1}\tilde{\vx}_{n}) & \sin(\omega^{T}_{1}\tilde{\vx}_{n}) & \cdots & \cos(\omega^{T}_{N}\tilde{\vx}_{n}) & \sin(\omega^{T}_{N}\tilde{\vx}_{n}) \\
      \end{array}
    \right)\in \mathbb{R}^{n\times 2N},
    $$
and $\tilde{\vx}_{i} \in \mathbb{R}^{q}$ $(i=1,\ldots,n)$ are training samples. Similarly, we use the matrix $\widetilde{K}$ to replace the original kernel matrix $K$ in the problem \eqref{Def_Dual_of_C_SVM_Problem} and turn the problem into a linear kernel support vector classification problem with $Z_{\omega}$ as new training samples.

In the implementation of the Nystr\"{o}m method, we replace $K_{r,r}$ by $K_{r,r}+\sigma I$ with $\sigma=10^{-3}$  \cite{Neal1998regression} to avoid numerical instabilities. Moreover, we chose the $k$-means clustering algorithm \cite{zhang2008improved} to determine the landmark points set $\mathcal{I}$ and the cardinality of $\mathcal{I}$ was given by $r=\{128,256,512,1024\}$. Similarly, in the implementation of the random Fourier features method, we set the sampling number to be $2N=\{128,256,512,1024\}$.
As for the parameters $C$ and $\alpha$, they are also selected by the ten-fold cross validation. After approximating the kernel matrix by the Nystr\"{o}m method and the random Fourier features method, we apply the SSsNAL, FAPG and P2PG methods, respectively, to solve the approximated linear C-SVC problems. Moreover, the termination criterion of the three algorithms are the same as \eqref{Def_Rkkt} except for replacing $Q$ with $\widehat{Q}$, where $\widehat{Q}_{ij}=y_i y_j \widehat{K}_{ij},\,i,j=1,\ldots,n$.

\begin{table}[htpb]
\begin{adjustbox}{addcode={\begin{minipage}{\width}}{\caption{Comparisons of the SSsNAL, FAPG and P2GP methods for the standard C-SVM problem \eqref{Def_Dual_of_C_SVM_Problem} with the RBF kernel function by the Nystr\"{o}m method. In the table, ``a'' denotes the SSsNAL method, ``b'' denotes the FAGP method and ``c'' denotes the P2PG method. In the last column, ``Rs'' denotes the reference number of unbounded support vectors and ``Ra'' denotes the reference classification accuracy on the testing data set (\%).}\label{table3_1}\end{minipage}},rotate=0,center}
\centering
\small
\begin{tabular}{|c|l|c|c|c|l|l|c|}
\hline
Data   & \multicolumn{1}{c|}{r} & suppvec & \multicolumn{1}{c|}{$\text{R}_{\text{KKT}}$} & \multicolumn{1}{c|}{Time}  & \multicolumn{1}{c|}{Iter} &\multicolumn{1}{c|}{Accuracy (\%)} &\multirow{2}{*}{\shortstack{Original\\Rs$|$Ra}}  \\
C:$\alpha$ & & & \multicolumn{1}{c|}{a$|$b$|$c}  &  \multicolumn{1}{c|}{a$|$b$|$c}  & \multicolumn{1}{c|}{a$|$b$|$c} & \multicolumn{1}{c|}{a$|$b$|$c} &  \\
\hline
\multirow{5}{*}{\shortstack{svmguide1\\0.063:1}}
     & 128   & 35    & 8.0-4 $|$ 9.5-4 $|$ 1.3-4 & 01 $|$ 01 $|$ 01   & 5(5) $|$ 403  $|$ 1(6,100) & 73.7 $|$ 73.7 $|$ 74.0 & \multirow{5}{*}{\shortstack{36 $|$ 96.3}}   \\
           & 256   & 44    & 8.8-4 $|$ 1.0-3 $|$ 1.1-4 & 00   $|$ 01   $|$ 00   & 6(4) $|$ 403 $|$ 1(6,72)& 76.7 $|$ 76.5 $|$ 76.5 & \\
           & 512   & 44    & 6.9-4 $|$ 9.9-4 $|$ 8.7-4 & 00   $|$ 01   $|$ 00   & 5(5) $|$ 372  $|$ 1(6,129) & 77.5 $|$ 77.6 $|$ 77.1 & \\
           & 1024  & 44    & 3.1-4 $|$ 9.9-4 $|$ 1.1-4 & 01   $|$ 02   $|$ 01   & 6(4) $|$ 376  $|$ 1(6,107) & 76.9 $|$ 77.0 $|$ 76.7 & \\
    \hline
    \multirow{5}{*}{\shortstack{w5a \\ 256:256}}
      & 128   & 14    & 9.6-4 $|$ 1.0-3 $|$ 1.2-4 & 00  $|$ 03 $|$ 01  & 5(3) $|$ 460 $|$ 1(5,173)  & 90.8 $|$ 90.8 $|$ 97.0  & \multirow{5}{*}{\shortstack{1390 $|$ 98.5}}  \\
           & 256   & 15    & 8.9-4 $|$ 9.9-4 $|$ 2.8-4 & 00  $|$ 03  $|$ 01  & 6(3) $|$ 376 $|$ 1(5,85)  & 96.9 $|$ 90.1 $|$ 89.7  & \\
          & 512   & 20    & 8.9-4 $|$ 9.5-4 $|$ 1.5-4 & 01  $|$ 05  $|$ 03   & 6(3) $|$ 372 $|$ 4(20,261) & 97.0 $|$ 97.0 $|$ 97.0  & \\
          & 1024  & 16    & 8.4-4 $|$ 9.7-4 $|$ 2.7-4 & 01  $|$ 07  $|$ 02   & 6(3) $|$ 360 $|$ 1(5,96) & 97.3 $|$ 96.8 $|$ 97.0  & \\
    \hline
    \multirow{5}{*}{\shortstack{w6a \\ 16:16}}
     & 128   & 7     & 8.4-4 $|$ 9.0-4 $|$ 1.0-4 & 01 $|$ 05  $|$ 01   &9(2) $|$ 592  $|$ 2(26,149) & 64.8 $|$ 64.8 $|$ 64.6 & \multirow{5}{*}{\shortstack{2613 $|$ 98.8}} \\
           & 256   & 9     & 4.0-4 $|$ 9.9-4 $|$ 4.5-4 & 01  $|$ 07  $|$  01   &9(2) $|$ 573  $|$ 1(25,80) & 93.6 $|$ 93.6 $|$ 94.6  &\\
           & 512   & 13    & 7.7-4 $|$ 9.9-4 $|$ 5.5-4 & 01   $|$ 11 $|$ 02   &8(3) $|$ 561  $|$ 1(24,85)  & 93.3 $|$ 93.3 $|$ 93.4 & \\
           & 1024  & 15    & 6.9-4 $|$ 9.9-4 $|$ 5.6-4 & 03  $|$ 17  $|$ 03   &8(2) $|$ 536  $|$ 1(24,70) & 94.9 $|$ 94.8 $|$ 94.9  &\\
    \hline
    \multirow{5}{*}{\shortstack{phishing \\ 2:4}}
    & 128   & 42    & 8.2-4 $|$ 8.3-4 $|$ 1.5-4 & 00   $|$ 01   $|$ 15  & 6(3) $|$ 263 $|$ 10(162,7188) & 79.2 $|$ 76.7 $|$ 76.7 & \multirow{5}{*}{\shortstack{3528 $|$ 97.4}} \\
           & 256   & 43    & 5.2-4 $|$ 7.1-4 $|$ 1.3-4 & 01  $|$ 01  $|$ 34  & 6(4) $|$ 268 $|$ 59(487,6628) & 83.8 $|$ 82.2 $|$ 75.2  & \\
          & 512   & 60    & 6.6-4 $|$ 9.1-4 $|$ 1.5-4 & 01   $|$ 02   $|$ 01   & 6(4) $|$ 254 $|$ 3(18,128) & 83.6 $|$ 85.3 $|$ 89.4 & \\
          & 1024  & 76    & 6.2-4 $|$ 9.3-4 $|$ 1.7-4 & 02   $|$ 03   $|$ 01   & 5(5) $|$ 255 $|$ 3(60,42) & 74.7 $|$ 77.1 $|$ 75.5 & \\
    \hline
    \multirow{5}{*}{\shortstack{a6a \\ 32:16}}
     & 128   & 43    & 7.2-4 $|$ 9.9-4 $|$ 1.6-4 & 00  $|$  03  $|$ 01   & 9(2) $|$ 551 $|$ 1(5,403) & 71.3 $|$ 60.3 $|$ 77.1 & \multirow{5}{*}{\shortstack{681 $|$ 84.6}} \\
          & 256   & 46    & 7.9-4 $|$ 9.5-4 $|$ 1.6-4 & 00 $|$ 04   $|$ 02   & 5(3) $|$ 535 $|$ 1(6,421) & 41.3 $|$ 42.0 $|$ 34.5 & \\
          & 512   & 66    & 7.8-4 $|$ 9.9-4 $|$ 1.7-4 & 01  $|$ 06   $|$ 03   & 5(4) $|$ 532 $|$ 1(5,358) & 54.1 $|$ 59.8 $|$ 69.7 & \\
          & 1024  & 75    & 6.9-4 $|$ 9.9-4 $|$ 1.9-4 & 02  $|$ 10  $|$ 06   & 5(4) $|$ 528 $|$ 1(7,367) & 41.3 $|$ 34.5 $|$ 32.3 & \\
    \hline
    \multirow{5}{*}{\shortstack{a7a \\ 32:4}}
      &  128     & 48    & 7.1-4 $|$ 9.9-4 $|$ 6.6-4 & 01 $|$ 05 $|$ 02  & 7(4) $|$ 647 $|$ 1(5,468) & 58.6 $|$ 58.6 $|$ 58.6 & \multirow{5}{*}{\shortstack{460 $|$ 84.9}} \\
          & 256      & 46    & 8.1-4 $|$ 9.9-4 $|$ 6.6-4 & 01  $|$ 07  $|$ 03  & 6(4) $|$ 640 $|$ 1(5,477) & 59.7 $|$ 58.6 $|$ 75.7 & \\
          & 512      & 72    & 7.3-4 $|$ 9.8-4 $|$ 6.4-4 & 01  $|$ 10  $|$ 05  & 6(5) $|$ 625 $|$ 1(5,363)  & 54.8 $|$ 58.6 $|$ 70.3 & \\
          & 1024      & 78    & 6.4-4 $|$ 9.9-4 $|$ 6.8-4 & 02   $|$ 17  $|$ 14 & 6(5) $|$ 602 $|$ 3(17,540) & 70.6 $|$ 72.6 $|$ 75.7 & \\
    \hline
\end{tabular}
\end{adjustbox}
\end{table}

Tables \ref{table3_1} and \ref{table3_2} show the performances of the SSsNAL, FAPG and P2PG methods on several approximated linearization RBF kernel C-SVC problems \eqref{Def_Dual_of_C_SVM_Problem} obtained by the Nystr\"{o}m method and the random Fourier features method, respectively. For comparisons, we also report the number of the unbounded support vectors and the classification accuracy of the SSsNAL method on the original RBF kernel C-SVC problems in the last column of the two tables. Above all, similar to the previous subsection, the SSsNAL method takes less time to solve the approximation problems than the FAPG and P2PG methods with the same stopping criterion. Indeed, as we see in the next subsection, for all of the three algorithms the computing times for solving the approximation problems are significantly less than those for solving the original problems. However, the test accuracies for solving the original problems are more ideal. In addition, the numbers of the unbounded support vectors obtained by solving the approximation problems are also quite different from the true numbers. As is explained in \cite{nalepa2019selecting}, if the Nystr\"{o}m and the random Fourier features methods are used to reduce the size of training sets by selecting the candidate vectors (i.e., the support vectors), it may produce large errors. This also shows the necessity and importance of solving the original problem.

Although the Nystr\"{o}m method and the random Fourier features method have some disadvantages, they are very suitable to be implemented to generate an initial iteration point. Therefore, in the following subsection we adopt the random Fourier features method with $2N=1024$ samples to warm start the SSsNAL method.

\begin{table}[htpb]
\begin{adjustbox}{addcode={\begin{minipage}{\width}}{\caption{Comparisons of the SSsNAL, FAPG and P2GP methods for the standard C-SVM problem \eqref{Def_Dual_of_C_SVM_Problem} with the RBF kernel function by the random Fourier features method. In the table, ``a'' denotes the SSsNAL method, ``b'' denotes the FAGP method and ``c'' denotes the P2PG method. In the last column, ``Rs'' denotes the reference number of unbounded support vectors and ``Ra'' denotes the reference classification accuracy on the testing data set (\%).}\label{table3_2}\end{minipage}},rotate=0,center}
\centering
\small
\begin{tabular}{|c|l|c|c|c|l|l|c|}
\hline
Data   & \multicolumn{1}{c|}{2N} & suppvec & \multicolumn{1}{c|}{$\text{R}_{\text{KKT}}$} & \multicolumn{1}{c|}{Time}  & \multicolumn{1}{c|}{Iter} &\multicolumn{1}{c|}{Accuracy (\%)} &\multirow{2}{*}{\shortstack{Original\\Rs$|$Ra}}  \\
C:$\alpha$ & & & \multicolumn{1}{c|}{a$|$b$|$c}  &  \multicolumn{1}{c|}{a$|$b$|$c}  & \multicolumn{1}{c|}{a$|$b$|$c} & \multicolumn{1}{c|}{a$|$b$|$c}  & \\
\hline
\multirow{4}{*}{\shortstack{svmguide1\\0.063:1}}
     & 128   & 40    & 3.1-4 $|$ 1.0-3 $|$ 1.5-4 & 01 $|$ 01 $|$ 01 & 3(9) $|$ 296   $|$ 2(55,150) & 96.5 $|$ 96.6 $|$ 96.6 & \multirow{4}{*}{36 $|$ 96.3}\\
          & 256   & 38    & 8.2-4 $|$ 9.5-4 $|$ 2.2-4 & 01 $|$ 00 $|$ 00 & 4(8)  $|$ 290  $|$ 2(55,104) & 94.8 $|$ 95.0 $|$ 95.1 & \\
          & 512   & 35    & 2.2-4 $|$ 9.9-4 $|$ 1.9-4 & 00 $|$ 10 $|$ 00 & 4(7)  $|$ 294 $|$ 3(60,97) & 94.8 $|$ 94.7 $|$ 94.5 & \\
          & 1024  & 39    & 4.2-4 $|$ 1.0-3 $|$ 2.1-4 & 01 $|$ 01 $|$ 01 & 4(7) $|$ 314 $|$ 2(55,92) & 95.3 $|$ 95.3 $|$ 95.4 & \\
    \hline
    \multirow{5}{*}{\shortstack{w5a \\ 256:256}}
       & 128   & 904   & 5.4-4 $|$ 5.4-4 $|$ 4.7-4 & 00 $|$ 01 $|$ 10 & 8(2) $|$ 301 $|$ 10(50,3965) & 54.1 $|$ 52.8 $|$ 49.4 & \multirow{4}{*}{1390 $|$ 98.5} \\
          & 256   & 441   & 7.2-4 $|$ 5.0-4 $|$ 3.4-4 & 01 $|$ 01 $|$ 07 & 10(2) $|$ 301  $|$ 5(25,1555) & 51.0 $|$ 50.8 $|$ 55.1 & \\
          & 512   & 492   & 6.5-4 $|$ 4.8-4 $|$ 6.9-4 & 01 $|$ 02 $|$ 04 & 10(2) $|$ 301   $|$ 3(15,655) & 64.7 $|$ 62.7 $|$ 58.1 & \\
          & 1024  & 368   & 8.1-4 $|$ 4.0-4 $|$ 5.0-4 & 03 $|$ 03 $|$ 07 & 12(2) $|$ 301 $|$ 4(20,589)  & 88.8 $|$ 87.4 $|$ 89.4 & \\
    \hline
    \multirow{5}{*}{\shortstack{w6a \\ 16:16}}
     & 128   & 460   & 6.2-4 $|$ 9.8-4 $|$ 7.8-4 & 01 $|$ 06   $|$ 16    & 6(4) $|$ 1296  $|$ 4(122,4239) & 50.9 $|$ 47.6 $|$ 43.0 & \multirow{4}{*}{2613 $|$ 98.8} \\
          & 256   & 454   & 7.5-4 $|$ 1.0-3 $|$ 1.1-4 & 01 $|$ 03    $|$ 10    & 6(4) $|$ 501  $|$ 5(25,542) & 61.0 $|$ 62.4 $|$ 63.0 & \\
          & 512   & 2031  & 2.4-4 $|$ 1.0-3 $|$ 1.2-4 & 03    $|$ 05   $|$ 06     & 6(6) $|$ 485  $|$ 5(25,542) & 88.6 $|$ 71.0 $|$ 69.2 & \\
          & 1024  & 2139  & 4.9-4 $|$ 1.0-3 $|$ 1.3-4 & 08  $|$ 10   $|$ 12    & 6(5) $|$ 495  $|$ 4(20,517) & 92.9 $|$ 92.8 $|$ 92.6 & \\
    \hline
    \multirow{5}{*}{\shortstack{phishing \\ 2:4}}
    & 128   & 1053 & 6.4-4 $|$ 4.8-4 $|$ 1.9-4 & 00    $|$ 01  $|$ 38    & 4(5) $|$ 434 $|$ 6(144,20952) & 92.5 $|$ 92.4 $|$ 92.4 & \multirow{4}{*}{3528 $|$ 97.4} \\
          & 256   & 466 & 3.8-4 $|$ 5.0-4 $|$ 1.9-4 & 01   $|$ 01   $|$ 64    & 5(5) $|$ 301  $|$ 13(389,22091) & 92.8 $|$ 92.7 $|$ 93.8 & \\
          & 512   & 554 & 4.1-4 $|$ 6.4-4 $|$ 1.9-4 & 02   $|$  02   $|$ 47    & 6(7) $|$ 281  $|$ 8(172,9024) & 92.7 $|$ 92.7 $|$ 92.7 & \\
          & 1024  & 690 & 2.9-4 $|$ 8.3-4 $|$ 1.6-4 & 03  $|$ 02 $|$ 21    & 6(9) $|$ 247  $|$ 5(69,2049) & 94.3 $|$ 94.3 $|$ 94.3 & \\
    \hline
    \multirow{5}{*}{\shortstack{a6a \\ 32:16}}
     & 128   & 1069 & 8.3-4 $|$ 5.0-4 $|$ 6.9-4 & 00  $|$ 01    $|$ 01     & 6(2) $|$ 434 $|$ 1(50,744) & 73.6 $|$ 73.5 $|$ 74.1 & \multirow{4}{*}{681 $|$ 84.6} \\
          & 256   & 443   & 6.6-4 $|$ 3.2-4 $|$ 6.9-4 & 00   $|$ 01  $|$ 02  & 6(2) $|$ 368 $|$ 1(43,710) & 78.7 $|$ 78.8 $|$ 78.5 & \\
          & 512   & 538   & 5.2-4 $|$ 3.6-4 $|$ 6.9-4 & 01   $|$ 02   $|$ 04     & 7(2) $|$ 401 $|$ 2(45,601) & 79.5 $|$ 79.6 $|$ 79.7 & \\
          & 1024  & 729 & 5.4-4 $|$ 3.3-4 $|$ 7.7-4 & 02   $|$ 03 $|$ 08     & 6(2) $|$ 301  $|$ 2(43,681) & 82.1 $|$ 82.1 $|$ 82.1 & \\
    \hline
    \multirow{5}{*}{\shortstack{a7a \\ 32:4}}
     & 128   & 409 &6.9-4 $|$ 5.8-4 $|$ 1.2-4 & 00   $|$ 01  $|$ 15    & 4(3) $|$ 301  $|$ 3(129,5375) & 76.8 $|$ 76.6 $|$ 76.0 & \multirow{4}{*}{460 $|$ 84.9} \\
          & 256   & 438 & 7.5-4 $|$ 5.2-4 $|$ 1.2-4 & 01   $|$ 02  $|$ 02     & 5(3) $|$ 301   $|$ 1(48,361) & 80.1 $|$ 80.1 $|$ 80.0 & \\
          & 512   & 550 & 3.7-4 $|$ 5.2-4 $|$ 1.2-4 & 01  $|$ 02   $|$ 03     & 6(3) $|$ 301   $|$ 1(26,315) & 82.8 $|$ 82.9 $|$ 82.9 &\\
          & 1024  & 747 & 4.8-4 $|$ 5.1-4 $|$ 1.5-4 & 03  $|$ 04  $|$ 05     & 5(4) $|$ 301 $|$ 2(55,280) & 83.5 $|$ 83.6 $|$ 83.5 &\\
    \hline
\end{tabular}
\end{adjustbox}
\end{table}

\subsection{Numerical experiments for the support vector classification problems with the RBF kernel}\label{RBF_kernel_case}

In this subsection, we present the numerical experiments of all the algorithms on the C-SVC problem \eqref{Def_Dual_of_C_SVM_Problem} with the RBF kernel function.

\begin{table}[htpb]
\begin{adjustbox}{addcode={\begin{minipage}{\width}}{\caption{Comparisons of the SSsNAL, FAPG, P2GP, and LINBSVM solvers for the standard C-SVM problems \eqref{Def_Dual_of_C_SVM_Problem} with the RBF kernel function. In the table, ``a'' denotes the SSsNAL method, ``b'' denotes the FAGP method, ``c'' denotes the P2GP method, and ``d'' denotes the LINBSVM solver.}\label{table3}\end{minipage}},rotate=90,center}
\centering
\small
\begin{tabular}{|cll|l|l|l|l|}
\hline
Data   & C:$\alpha$ & suppvec & \multicolumn{1}{c|}{$\text{R}_{\text{KKT}}$} & \multicolumn{1}{c|}{Time}  & \multicolumn{1}{c|}{Iter} &\multicolumn{1}{c|}{Accuracy (\%)}\\
& & & \multicolumn{1}{c|}{a$|$b$|$c$|$d}  &  \multicolumn{1}{c|}{a$|$b$|$c$|$d}  & \multicolumn{1}{c|}{a$|$b$|$c$|$d} & \multicolumn{1}{c|}{a$|$b$|$c$|$d} \\
    \hline
    svmguide1 & 0.063:1 & 30    & 4.1-4 $|$ 9.6-4 $|$ 9.4-4 $|$ 9.2-1 & 1  $|$ 6 $|$ 3 $|$ 28 & 7(2) $|$ 327  $|$ 3(60,113) $|$ 728 & 96.3 $|$ 96.3 $|$  96.3 $|$ 84.1 \\ \hline
    w1a   & 64:64 & 474   & 7.8-4 $|$ 9.9-4 $|$ 3.0-5 $|$ 5.3-2 & 6 $|$ 9 $|$ 9 $|$ 4:17 & 6(3) $|$ 234 $|$ 5(38,265) $|$ 20000 & 97.8 $|$ 97.8 $|$ 97.8 $|$ 81.1 \\ \hline
    w2a   & 16:16 & 713   & 5.6-4 $|$ 9.9-4 $|$ 9.9-5 $|$ 2.5-1 & 11 $|$ 19 $|$ 13 $|$ 8:06 & 8(6) $|$ 333 $|$ 4(58,187) $|$ 20000 & 98.1 $|$ 98.1 $|$ 98.1 $|$ 56.7 \\ \hline
    w3a   & 32:128 & 889   & 6.0-4 $|$ 9.9-4 $|$ 1.7-5 $|$ 3.5-2 & 22 $|$ 32 $|$ 27 $|$ 16:24 & 9(2) $|$ 378 $|$ 4(52,443) $|$ 20000 & 98.1 $|$ 98.1 $|$ 98.2 $|$ 50.9 \\ \hline
    w4a   & 32:128 & 1193  & 6.9-4 $|$ 9.9-4 $|$ 1.8-3 $|$ 4.6-2 & 37 $|$ 4:11 $|$ T  $|$ 40:46 & 9(3) $|$ 482  $|$ 2(12,182) $|$ 20000 & 98.4 $|$ 98.4 $|$ 98.4 $|$ 46.9 \\ \hline
    w5a   & 256:256 & 548  & 9.4-4 $|$ 7.2-4 $|$ 5.3-3 $|$ 1.3-2 & 31 $|$ 29:46 $|$ T $|$ 1:10:11 & 9(1) $|$301 $|$ 1(5,73) $|$ 20000 & 98.5 $|$ 98.5 $|$ 98.3 $|$ 75.2 \\ \hline
    w6a   & 16:16 & 2668  & 2.9-4 $|$ 2.5-2 $|$ 7.5-1 $|$ 9.3-1 & 1:03:04 $|$ T $|$ T $|$ T & 13(3) $|$ 157   $|$ 1(5,30) $|$ 8142 & 98.8 $|$ 98.8 $|$ 42.8 $|$ 37.6 \\ \hline
    mushrooms & 1:0.063 & 4040 & 4.7-4 $|$ 5.8-4 $|$ 1.5-2 $|$ 7.2-1 & 2:07  $|$ 11:13 $|$ 5:26 $|$ 44:31 & 8(3) $|$ 150 $|$ 1(5,23) $|$ 20000 & 99.8 $|$ 99.8 $|$ 99.8 $|$ 92.4  \\ \hline
    phishing & 2:4   & 3529  & 5.3-4 $|$ 9.8-4 $|$ 1.0-3 $|$ 1.2-0 & 1:38 $|$ 5:19 $|$ 32:00 $|$ 1:21:37 & 9(2) $|$ 195 $|$ 2(18,122) $|$ 20000 & 97.4 $|$ 97.4 $|$ 97.4 $|$ 93.4  \\ \hline
    a2a   & 128:512 & 398 & 6.9-4 $|$ 9.2-4 $|$ 2.6-6 $|$ 2.4-2 & 2 $|$ 4 $|$ 8 $|$ 2:07 & 4(1) $|$ 201  $|$ 4(59,1584) $|$ 20000  & 82.1 $|$ 81.8 $|$ 82.3 $|$ 71.4\\ \hline
    a3a   & 128:32 & 157 & 7.3-4 $|$ 6.9-4 $|$ 3.1-5 $|$ 4.4-1 & 3 $|$ 4 $|$ 3 $|$ 4:19 & 4(1) $|$ 121  $|$ 2(41,149) $|$ 20000 & 83.3 $|$ 83.3 $|$ 83.3 $|$ 70.1\\ \hline
    a4a   & 32:4  & 148 & 5.7-4 $|$ 3.2-4 $|$ 2.7-4 $|$ 1.3-0 & 6  $|$ 13 $|$ 7 $|$ 9:48 & 7(2) $|$ 195 $|$ 2(45,105) $|$ 20000 & 84.5 $|$ 84.5 $|$ 84.5 $|$ 72.0 \\ \hline
    a6a   & 32:16 & 628 & 6.2-4 $|$ 4.4-4 $|$ 3.8-2 $|$ 1.6.0 & 1:31  $|$ 18:30 $|$ T $|$ 1:11:27 & 8(2) $|$ 332 $|$ 1(45,194) $|$ 20000 & 84.4 $|$ 84.3 $|$ 83.1 $|$ 71.3  \\ \hline
    a7a   & 32:4  & 405 & 5.2-4 $|$ 5.2-4 $|$ 2.9-1 $|$ 1.2-0 & 7:17  $|$ 1:11:43 $|$ T $|$ T & 8(2) $|$ 301 $|$ 1(39,62) $|$ 16871 & 84.6 $|$ 84.7 $|$ 81.8 $|$ 71.4 \\ \hline
\end{tabular}
\end{adjustbox}
\end{table}

In Table \ref{table3}, we present the performances of the four algorithms on the various problems. In order to avoid saving the $n \times n$ dense kernel matrix $Q$, which may cause an out-of-memory exception, we only save partial columns of $Q$, denoted by $Q_{\mathcal{P}}$. Note that $Q_{\mathcal{P}} \in \mathbb{R}^{n \times p}$, where $p=\min\{n,\lfloor 3.6\times 10^{7}/n \rfloor\}$. So the number of entries in $Q_{\mathcal{P}}$ is less than $6000^2$. Then, we compute the rest entries of $Q$ on demand in each iteration of the FAGP method and the P2GP method. We can see the LIBSVM solver fails to produce a reasonably accurate solution for all the problems and its accuracies of the classification are worse than those of the other three solvers. Meanwhile, the SSsNAL, FAGP and P2GP methods can solve all the problems successfully. Nevertheless, in most cases, the FAGP and P2GP methods require much more time than the SSsNAL method. One can also observe that for large-scale problems, the SSsNAL method outperforms the FAGP and P2GP methods by a large margin (sometimes up to a factor of $10\sim100$). In fact, in almost all cases, when the iteration point is near the solution point, the number of unbounded support vectors is much smaller than the number of training data. This implies that the subproblems in our SSsNAL method can be solved by the semismooth Newton method very efficiently because the Newton direction can be obtained just by solving a much smaller linear system. The superior numerical performance of the SSsNAL method shows that it is a robust, high-performance algorithm for large-scale SVM problems.

\subsection{Numerical experiments for the support vector regression problems with the RBF kernel}\label{RBF_kernel_regression}
Three experiments based on the benchmark datasets from the LIBSVM data are performed to compare the SSsNAL method with the FAPG and P2PG methods on the SVR problems \eqref{Def_Dual_of_SVR_Problem} with the RBF kernel. In these experiments, we also randomly partition each dataset $\{(\tilde{\vx}_{i},y_{i})\in \mathbb{R}^{q}\times \mathbb{R}\ ,i = 1, . . . , n\}$ into a training set $\mathcal{D}_{\text{tr}}$ and a testing set $\mathcal{D}_{\text{te}}$ in the proportion 8:2 and normalize each variable $(\tilde{\vx}_{i},y_{i})$ into  $[0,1]^{q+1}$. Additionally, the spread parameter $\alpha$ in the Gaussian kernel $k(\vx,\vy)=\exp^{-||\vx-\vy||^2/2\alpha}$ and the regularization parameter C are selected by the ten-fold cross validation, while the insensitivity parameter $\varepsilon$ is determined directly by the method proposed in \cite{cherkassky2004practical}. The mean square error, defined by
$$\textrm{MSE} = \frac{1}{n_{\text{te}}}\sum_{i\in \mathcal{D}_{\text{te}}}
\left(y_i-\vk_{\tilde{\vx}_{i}}^{T}\vw^{*}-b^{*}\right)^2,$$
is used to characterize the prediction accuracy on the testing set, where $(\vw^{*},b^{*})\in \mathbb{R}^{n_{\text{tr}}}\times \mathbb{R}$ is the optimal solution of the SVR problem \eqref{Def_SVR_Problem}, $\vk_{\tilde{\vx}_{i}}=[k(\tilde{\vx}_{j_1},\tilde{\vx}_{i}),\ldots,k(\tilde{\vx}_{j_{n_{\text{tr}}}},\tilde{\vx}_{i})]^{T}\in \mathbb{R}^{n_{\text{tr}}}$ with $\tilde{\vx}_{j_k}\in \mathcal{D}_{\text{tr}}, k = 1,\ldots,n_{\text{tr}},$ is a given vector and $n_{\text{tr}}$ and $n_{\text{te}}$ are the cardinality of the training set and a testing set, respectively.
Considering that solving the SVR problems usually requires higher accuracy, we set the tolerance of the KKT residual Tol $= 10^{-6}$ in the experiments.

In Table \ref{table4}, we repeat the random partition for 10 times to yield the mean performances of the three algorithms. As we see in the table, the SSsNAL method not only uses less time but also needs fewer iterations than the other two algorithms under the same stopping criterion. This is due to the fast linear convergence rate of the SSsNAL method and the fact that the inner subproblem can be solved very quickly and cheaply by the semismooth Newton method, which shows again the superiorities of the SSsNAL method in solving the large-scale SVM problems.

\begin{table}[htpb]
\begin{adjustbox}{addcode={\begin{minipage}{\width}}{\caption{Comparisons of the SSsNAL, FAPG and P2GP methods for the SVR problems \eqref{Def_Dual_of_SVR_Problem} with the RBF kernel function. In the table, ``a'' denotes the SSsNAL method, ``b'' denotes the FAGP method, and ``c'' denotes the P2GP method.}\label{table4}\end{minipage}},rotate=00,center}
\centering
\small
\begin{tabular}{|cc|c|c|c|c|}
\hline
\multirow{2}{*}{\shortstack{Data \\ C:$\alpha$:$\varepsilon$}} & \multirow{2}{*}{suppvec} & \multicolumn{1}{c|}{$\text{R}_{\text{KKT}}$} & \multicolumn{1}{c|}{Time}  & \multicolumn{1}{c|}{Iter} &\multicolumn{1}{c|}{MSE}\\
& & \multicolumn{1}{c|}{a$|$b$|$c}  &  \multicolumn{1}{c|}{a$|$b$|$c}  & \multicolumn{1}{c|}{a$|$b$|$c} & \multicolumn{1}{c|}{a$|$b$|$c} \\
    \hline
    \multirow{2}{*}{\shortstack{cadata \\ 512:0.0313:0.0686}}  & \multirow{2}{*}{2}    & \multirow{2}{*}{9.8-8 $|$ 9.9-8 $|$ 4.9-7} & \multirow{2}{*}{29  $|$ 2:15 $|$ 2:07} & \multirow{2}{*}{8(3) $|$ 149  $|$ 68(340,595) }& \multirow{2}{*}{2.88-2 $|$ 2.88-2 $|$  2.88-2} \\
     &     &  &  &  &  \\ \hline
    \multirow{2}{*}{\shortstack{abalone \\ 512:0.0625:0.0559}}  & \multirow{2}{*}{2}   & \multirow{2}{*}{2.9-7 $|$ 2.8-7 $|$ 6.9-7}  & \multirow{2}{*}{2 $|$ 9 $|$ 1:11} & \multirow{2}{*}{2(9) $|$ 216 $|$ 114(635,2406)} & \multirow{2}{*}{4.63-2 $|$ 4.63-2 $|$ 4.63-2}  \\
     &     &  &  &  &  \\ \hline
    \multirow{2}{*}{\shortstack{space\_ga \\ 64:0.250:0.0630}}  & \multirow{2}{*}{4}   & \multirow{2}{*}{1.4-7 $|$ 2.2-7 $|$ 2.7-7} & \multirow{2}{*}{1 $|$ 7 $|$ 24} & \multirow{2}{*}{9(3) $|$ 661 $|$ 14(124,2603)} & \multirow{2}{*}{1.15-1 $|$ 1.16-1 $|$ 1.16-1}  \\
     &     &  &  &  &  \\ \hline
\end{tabular}
\end{adjustbox}
\end{table}

\newpage

\section{Conclusion}\label{section_5}
In this paper, we proposed a highly efficient SSsNAL method for solving the large-scale convex quadratic programming problem generated from the dual problem of the SVMs. By leveraging the primal-dual error bound result, the fast local convergence property of the AL method can be guaranteed, and by exploiting the second-order sparsity of the Jacobian when using the SsN method, the algorithm can efficiently solve the problems. Finally, numerical experiments demonstrated the high efficiency and robustness of the SSsNAL method.

\section*{Acknowledgements}\label{section_6}
We would like to thank Prof. Akiko Takeda at the University of Tokyo and Dr. Naoki Ito at Fast Retailing Co., Ltd. for sharing the codes of the FAPG method and providing some valuable comments for our manuscript. Additionally, we also thank Prof. Ingo Steinwart at University of Stuttgart for his valuable suggestions to improve the manuscript.


\end{document}